\newcommand{\CC}{{\mathbb{C}}}
\newcommand{\HH}{{\mathbb{H}}}
\newcommand{\QQ}{{\mathbb{Q}}}
\newcommand{\PP}{{\mathbb{P}}}
\newcommand{\RR}{{\mathbb{R}}}
\newcommand{\ZZ}{{\mathbb{Z}}}
\renewcommand{\Im}{\mathop{\mathrm{Im}}}
\newcommand{\SL}{\mathop{\mathrm{SL}}}
\newcommand{\PSL}{\mathop{\mathrm{PSL}}}
\newcommand{\PGL}{\mathop{\mathrm{PGL}}}
\renewcommand{\phi}{\varphi}
\theoremstyle{plain}
\newtheorem{theorem}{Theorem}[section]
\newtheorem{prop}[theorem]{Proposition}
\newtheorem{lemma}[theorem]{Lemma}
\newtheorem{corollary}[theorem]{Corollary}
\theoremstyle{remark}
\newtheorem{remark}[theorem]{Remark}
\newtheorem*{hint}{Hint}
\theoremstyle{definition}
\newtheorem{definition}[theorem]{Definition}
\newtheorem{example}[theorem]{Example}
\newtheorem{exercise}[theorem]{Exercise}
\numberwithin{equation}{section}
\numberwithin{figure}{section}
\begin{document}

\title{Friezes and Continued Fractions}
\author{Evgeny Smirnov}
\address{HSE University, Russia; Independent University of Moscow, Russia; Guangdong Technion -- Israel Institute of Technology, China}
\email{evgeny.smirnov@gmail.com}
\date{\today}
\dedicatory{To the memory of John Conway}

\maketitle

\begin{abstract} We explore basic properties of number friezes, due to Conway and Coxeter, and their relations to decompositions of rational numbers into continued fractions, Farey sequences, and the modular group acting on the hyperbolic plane. These are notes from a mini-course for undergraduate students given at the 19th Summer School ``Modern Mathematics'', Dubna, Russia, July 18--29, 2019.
\end{abstract}

\setcounter{tocdepth}{1}
\tableofcontents
\newpage

\epigraph{
\emph{\emph{Faust.}\\
Das Pentagramma macht dir Pein?\\
Ey sage mir, du Sohn der H\"olle,\\
Wenn das dich bannt, wie kamst du denn herein?\\
Wie ward ein solcher Geist betrogen?\\
\emph{Mephistopheles.}\\
Beschaut es recht! es ist nicht gut gezogen;\\
Der eine Winkel, der nach au\ss en zu,\\
Ist, wie du siehst, ein wenig offen.\\
\emph{Faust.}\\
Das hat der Zufall gut getroffen!\\
Und mein Gefangner w\"arst denn du?\\
Das ist von ohngef\"ahr gelungen!\\
~\\
\emph{(J.\,W.~von Goethe. Faust)}}}

\epigraph{
\emph{\emph{Faust:}\\
The pentagram prohibits thee?\\
Why, tell me now, thou Son of Hades,\\
If that prevents, how cam'st thou in to me?\\
Could such a spirit be so cheated?\\
\emph{Mephistopheles:}\\
Inspect the thing: the drawing's not completed.\\
The outer angle, you may see,\\
Is open left --- the lines don't fit it.\\
\emph{Faust:}\\
Well, --- Chance, this time, has fairly hit it!\\
And thus, thou'rt prisoner to me?\\
It seems the business has succeeded.\\
~\\
\emph{(Translated by Bayard Taylor)}
}}

\section*{Introduction}

Friezes are tables filled with positive numbers according to certain simple rules. They were first defined in the 1970s in works by H.\,S.\,M.\,Coxeter and John Conway. Despite the simplicity of their definition, friezes possess many surprising properties. They are connected with numerous other mathematical concepts including polygon triangulations, Catalan and Fibonacci numbers, continued fractions, Farey sequences... Moreover, it turns out that friezes (though not called by this name at the time) were already considered in works by Carl Friedrich Gauss and John Napier.

In the 2000s, the interest in friezes among mathematicians grew significantly due to the emergence of cluster algebra theory, introduced by Andrei Zelevinsky and Sergei Fomin. It turns out that friezes appear in connection with quiver representations, Grassmannians, elliptic functions, generalized associahedra, and many other topics from modern mathematics.

\bigskip

In these notes, we begin with classical questions about friezes considered by Coxeter and Conway. The first three chapters are devoted to this. In Chapter 1, we present basic definitions, make several observations about friezes (which will be proved later), and discuss the early history of the subject: how friezes appeared in works by Gauss and his predecessors.

In Chapter 2 we explore the phenomenon of integrality of friezes. We show that the elements of a frieze are computed as integer-coefficient polynomials, known as continuants, from the elements of the initial row. In Chapter 3, we continue studying friezes and derive relations between elements in their rows and diagonals. The main result of Chapter 4 is the classification of all integer friezes. As we will show, integer friezes of order $n$ are in bijection with triangulations of a convex $n$-gon; moreover, all elements of the frieze can be reconstructed from this triangulation using a simple algorithm.

Chapters 5 and 6 are devoted to expansions of rational numbers into two types of continued fractions: ``ordinary'' or positive, and negative (where the integer part is taken with excess rather than deficit). We formulate properties of continued fractions in terms of $2\times 2$ matrices and in Chapter 6 establish connections between the two types of expansions. We also show that the continued fraction expansion of a rational number corresponds to a triangulation of a polygon of a special form. It turns out that various characteristics of this continued fraction can be reconstructed from the triangulation: convergents, length of the continued fraction, etc. Thus continued fractions turn out to be connected with friezes: each rational number determines an integer frieze with the same triangulation.

In Chapters 7 and 8, we study the relation of these topics to  the action of the modular group $\PSL_2(\ZZ)$ on the hyperbolic plane. Namely, any triangulation of the described form can be obtained as a subgraph in the so-called Farey graph: an infinite graph whose vertices are rational points on the absolute of the hyperbolic plane, and the edges correspond to lines connecting points $p/q$ and $r/s$ such that $ps-qr=\pm 1$. This graph is preserved by the modular group $\PSL_2(\ZZ)$. We describe a construction that assigns to each rational number (or the corresponding polygon triangulation) an element of the modular group with an explicit expression in terms of  its standard generators. Finally, we examine how triangulations of polygons and their generalizations, 3D-dissections, are connected to relations on the generators of the modular group.

\bigskip

The dependency diagram of chapters looks as follows:

\[
\xymatrix{
1\ar[r] & 2\ar[r] &3\ar[r] &4 \ar[rd]\\
&&&&7 \ar[r] &8\\
&& 5 \ar[r] &6 \ar[ru] \\
}
\]

Readers more interested in continued fractions may skip the first four chapters and start with Chapters 5 and 6, that are almost independent of the previous part.

\bigskip

These notes are based on a mini-course of three lectures I gave at the 19th Summer School ``Modern Mathematics'' in Dubna in July 2019. I decided to speak about friezes after attending the talks by Valentin Ovsienko and Sophie Morier-Genoud at the Summer School ``Representation Theory of Lie Groups, Mathematical Physics and Combinatorics'' (Reims, France) in June of the same year. I also used materials from my mini-course on continued fractions and their connection with the modular group, given at the online school ``Combinatorics and Algorithms'' in February 2021. I am grateful to the organizers and participants of these schools.

Needless to say, these notes make no claim to completeness in covering the literature on friezes; for this I recommend the reader to turn to surveys and specialized articles on the subject. For example, one might start with a wonderful survey by  Sophie Morier-Genoud~\cite{Morier15}. Moreover, a collection of references on friezes and their connections with other areas of mathematics can be found on the homepage of Anna Felikson:

\medskip

\noindent\url{http://www.maths.dur.ac.uk/users/anna.felikson/Projects/frieze/frieze-res.html}

\medskip

I am grateful to the colleagues who read preliminary versions of these notes and provided feedback: Anna Felikson, Victor Kleptsyn, Grigory Merzon, Valentin Ovsienko, Sergei Tabachnikov, and others. I received particularly valuable feedback from Alexei Ustinov; from him I also learned about representing continuants using Morse codes. Finally, I want to thank my wife Svetlana Bochaver for her constant support and encouragement; without her, this text would likely never have been completed.
\bigskip

On April 11, 2020, as I was finishing the initial version of these notes, John Horton Conway, one of the most remarkable and extraordinary mathematicians of our time,  passed away at the age of 83. I dedicate this text to his memory.

\section{Friezes}

\subsection{Definition and first properties}\label{ssec:first}

Consider the following simple arithmetic game. We start with a row of $1$'s, going infinitely in both directions. Then we take a finite sequence of several nonnegative integers, not containing two subsequent $1$'s, extend it periodically in both directions, and write it under the first row with a half-step shift, so that the numbers would be arranged in a checkerboard pattern. These are the first two rows of our table.

Let us fill the subsequent rows of the table, starting  from the third one, in such a way that the numbers in these rows satisfy the \emph{unimodularity condition}: for any four numbers at the vertices of a unit diamond
\begin{equation}\label{eq:diamond}
\begin{array}{ccc}
&b&\\a &&d\\&c
\end{array}
\end{equation}
we have $ad-bc=1$. The game is terminated if at some step we get a row consisting only of $1$'s; in this case the table is said to be a \emph{frieze}.



Here is an example. Let us write in the second row of the table the following sequence: $(2,1,4,2,1,3,2)$, extended to both directions periodically. Then at each position at the third row we will have the product of the numbers above it minus one. This means that the third row will consist of repeating sequences $(1,3,7,1,2,5,3)$. It is clear that all the subsequent rows also have the same period as the second row, namely, 7. Computing this table further, we get the following:
\[
\begin{array}{ccccccccccccccccccccc}
\dots &1 && 1&& 1&& 1&& 1&& 1&& 1&& 1&& 1 &\dots\\
&\dots & 2 && 1 && 4 && 2 && 1 && 3 &&2 && 2 && 1& \dots\\
\dots &3 && 1&& 3&& 7&& 1&& 2&& 5&& 3&& 1 &\dots\\
&\dots & 1 && 2 && 5 && 3 && 1 && 3 &&7 && 1 && 3& \dots\\
\dots &2 && 1&& 3&& 2&& 2&& 1&& 4&& 2&& 2 &\dots\\
&\dots &1 && 1&& 1&& 1&& 1&& 1&& 1&& 1&& 1 &\dots\\
\end{array}
\]

We see that the sixth row contains only of $1$'s. According to the unimodality condition, the row below it would consist only of zeroes; we will not write it here. Sometimes it will also be convenient to suppose that there is an ``invisible'' row of zeroes above the first line; then the rows number zero, one and two will also satisfy the unimodality condition.


It is obvious that a frieze is determined by its second row. Further we will write something like ``frieze with the second row $(2,1,4,2,1,3,2)$'' (this is the frieze from the previous example), providing the period of the second row.

But if we put arbitrary nonnegative integers numbers into the second row, most probably we will not get a frieze. For example, take the second row consisting only of $2$'s. Then all the elements of the third row will be equal to $\frac{2\cdot 2-1}{2}=3$, the elements in the fourth row equal $\frac{3\cdot 3-1}{2}=4$, and so on; the $n$-th row will consist of $n$'s. So the entries in the rows of this table will be always increasing and thus will never become equal to 1. Sometimes we will abuse our convention and refer to such infinite tables satisfying the unimodulality condition as to \emph{infinite friezes}.

Looking at the previous example, we can notice some interesting properties of friezes. In particular, all elements of both finite and infinite friezes turn out to be integers. This is completely non-obvious: the unimodularity condition for the diamond~(\ref{eq:diamond}) implies that $c = (ad-1)/b$, meaning that to calculate the elements of the next row, one must use division; yet its result is  for some reason always an integer. In the next chapter, we will find out why this is the case.

Now let us return to ``finite'' friezes, that  is, those that end with a row of ones. From the construction, it is clear that if the second row of the frieze is periodic with period $n$, then all other rows will have the same property. However, the frieze from the given example additionally possesses glide symmetry: if we flip it upside down and shift it sideways by three and a half positions, it will coincide with the original. In Chapter~\ref{sec:relations}, we will show that this is a general property: a frieze with $n-1$ rows always possesses glide symmetry of order $n/2$ and, consequently, is periodic with period $n$.

Furthermore, one can consider friezes whose second row contains not necessarily integers, but arbitrary positive numbers. It turns out that the previous properties will also hold for them: the presence of glide symmetry and the connection between the number of rows and the period of the frieze. However, we will mainly be interested in integer friezes.

\textbf{A note on terminology.} In architecture, a frieze is a decorative ornament in the form of a horizontal band. In mathematics, this word has one more meaning: \emph{friezes}  are figures on a plane that repeat in only one direction, i.e., they are mapped onto themselves by a parallel translation. The theory of discrete transformation groups studies \emph{frieze groups} (also known as \emph{wallpaper groups}), which describe the possible types of symmetries of such patterns. It turns out there are only seven such groups; interestingly, Conway\footnote{John Horton Conway (1937--2020) was a prolific British mathematician active in many areas, including group theory (Conway groups, the moonshine conjecture), knot theory (Conway polynomial), combinatorial game theory (surreal numbers), and more. He was widely known as a popularizer of mathematics, particularly as the creator of the ``Game of Life''. } and Coxeter\footnote{Harold Scott MacDonald (Donald) Coxeter (1907--2003) was a British-born Canadian geometer, sometimes considered  one of the greatest geometers of the 20th century. He was the author of numerous works on regular polytopes, discrete groups, and more.}, the authors of the concept of a numerical frieze, also studied these groups. However, wallpaper groups are not directly related to numerical friezes, and we will not consider them in these notes. Those wishing to learn more about this can refer to, for example, the Wikipedia article: \url{https://en.wikipedia.org/wiki/Frieze_group}.

\subsection{Friezes with two and three rows}

Let us examine the structure of friezes with small number of rows. A two-line frieze is unique by definition: it consists only of $1$'s.
\[
\begin{array}{ccccccccccccccccccccc}
\dots &1 && 1&& 1&& 1&& 1&& 1&& 1&& 1&& 1 &\dots\\
&\dots &1 && 1&& 1&& 1&& 1&& 1&& 1&& 1&& 1 &\dots\\
\end{array}
\]

Next, a frieze with three rows is of the following form:
\[
\begin{array}{ccccccccccccccccccccc}
\dots &1 && 1&& 1&& 1&& 1&\dots\\
& \dots & a_1 && a_2&& a_3&& a_4&&\dots \\
\dots &1 && 1&& 1&& 1&& 1 &\dots\\
\end{array}
\]
The unimodularity condition is equivalent to saying that the product of any two elements in the middle row equals~2. Therefore the numbers in this row are alternating:
\[
\begin{array}{ccccccccccccccccccccc}
\dots &1 && 1&& 1&& 1&& 1&\dots\\
& \dots & a_1 && a_2&& a_1&& a_2&&\dots \\
\dots &1 && 1&& 1&& 1&& 1 &\dots\\
\end{array},
\]
with $a_1a_2=2$. In particular, there are two integer three-row friezes: one corresponding to $a_1=2$ and $a_2=1$, and the other to $a_1=1$ and $a_2=2$. These friezes differ only by a shift, but we will treat them as different ones. And if $a_1=a_2$, we get the following non-integer frieze:
\[
\begin{array}{ccccccccccccccccccccc}
\dots &1 && 1&& 1&& 1&& 1&\dots\\
& \dots & \sqrt{2} && \sqrt{2} && \sqrt{2} &&  \sqrt{2} &&\dots \\
\dots &1 && 1&& 1&& 1&& 1 &\dots\\
\end{array}
\]
\begin{remark}\label{rem:period}
Note that three-row friezes are periodic with period 2, not 4 (and the period of the last frieze is just 1). This is related to the glide symmetry: if a three-row frieze is flipped upside down and shifted by half a period, that is, by two, it should coincide with the initial one.
\end{remark}

\subsection{Four-row friezes} It is more convenient to construct such friezes from the diagonal rather than from the top row. Consider a frieze of the form:
\[
\begin{array}{ccccccccccccccccccccc}
\dots &1 && 1&& 1&& 1& \dots\\
&\dots & x&& z && v && t&\dots\\ 
&&\dots & y && u && w& \dots\\
&\dots & 1&& 1&& 1&& 1& \dots\\
\end{array}
\]
Let us express all its values consequently in terms of $x$ and $y$. We get:
\[
z=\frac{1+y}{x};\qquad u=\frac{1+z}{y}=\frac{1+x+y}{xy};\qquad v=\frac{1+u}{z}=\frac{1+x+y+xy}{xy}\cdot\frac{x}{1+y}=\frac{1+x}{y};
\]
\[
w=\frac{1+v}{u}=\frac{1+x+y}{y}\cdot\frac{xy}{1+x+y}=x;\qquad t=\frac{1+w}{v}=(1+x)\cdot\frac{y}{1+x}=y.
\]
We obtain $x$ and $y$ again, but on a different diagonal. We see that this frieze again turns out to be periodic with period 5 and possesses glide symmetry.

\begin{exercise}
List all integer friezes of order 5. How many are there?
\end{exercise}

\begin{small}
\begin{remark} If we consider the  elements of a frieze as rational functions of \emph{variables} $x$ and $y$, then the denominator of these rational functions always turns out to be a monomial. In other words, the frieze elements are not just rational functions, but \emph{Laurent polynomials}: polynomials where negative integer powers of variables are also allowed. Since we often divide by non-monomials, we can note that ``mysteriously'' the division always turns out to be exact, though now in the ring of Laurent polynomials. This observation, known as the \emph{Laurent phenomenon}, has far-reaching generalizations in cluster algebra theory.
\end{remark}
\end{small}

\begin{exercise}
Express the elements of a frieze with five rows through the values on the diagonal. Will the Laurent phenomenon hold in this case?
\end{exercise}

\begin{exercise}
Consider a frieze with four rows with $x=y=\tau$. Show that $\tau=\frac{1+\sqrt{5}}{2}$, and find the remaining elements of this frieze.
\end{exercise}

The last frieze can be obtained from a regular pentagon as follows.
\begin{figure}[h!]
\begin{tikzpicture}[scale=2.5,line cap=round,line join=round]
\foreach \x in {0,...,4} {
  \coordinate (A\x) at (\x*360/5-90-180/5:1);
  \draw[thick] (\x*360/5-90-180/5:1)--(\x*360/5-90+180/5:1);
}
\draw (A4)--(A0) (A0)--(A1) (A1)--(A2) (A2)--(A3) (A3)--(A4);
\draw (A4)--(A1) (A2)--(A4) (A1)--(A3) (A2)--(A0) (A3)--(A0);

\node[below] at (A0) {$B$};
\node[below] at (A1) {$C$};
\node[right] at (A2) {$D$};
\node[above] at (A3) {$E$};
\node[left]  at (A4) {$A$}; 
\end{tikzpicture}
\caption{Regular pentagon}\label{fig:pentagon}
\end{figure}
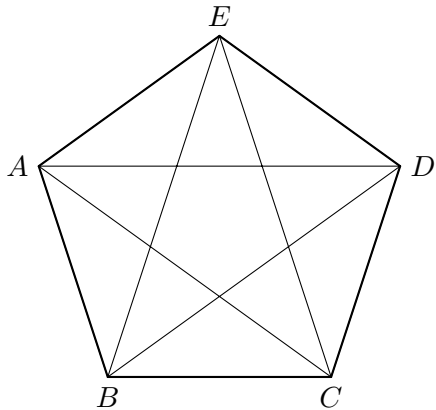
Consider a regular pentagon with all sides equal to 1 (Fig.~\ref{fig:pentagon}). Denote its diagonal by $\tau$. 
\emph{Ptolemy's theorem\footnote{Claudius Ptolemy (c. 100 -- c. 170 AD) was an ancient Greek mathematician, astronomer, astrologer, geographer, and music theorist. In his famous astronomical treatise \emph{the Almagest}, he proposed a geocentric model of the universe, in which the motion of celestial bodies is represented as a combination of uniform circular motions. He compiled a table of chords corresponding to angles in whole degrees and introduced the division of degrees into minutes and seconds.}} from plane geometry states that for an inscribed quadrilateral $ABCD$, the equality $AB\cdot CD+AD\cdot BC=AC\cdot BD$ holds. 
We can write Ptolemy's theorem for an isosceles trapezoid $ABCD$: in it $AB=BC=CD=1$, and $AD=AC=BD=\tau$. We get the equation
\[
1\cdot\tau + 1\cdot 1=\tau\cdot\tau,\qquad\text{i.e.}\qquad \tau^2-\tau-1=0.
\]
The positive root of this equation is exactly $\frac{1+\sqrt{5}}{2}$; this is the famous \emph{golden ratio}.

\begin{exercise}
Consider a regular $n$-gon with side length 1 and repeatedly apply Ptolemy's theorem to it. What frieze will we obtain? What are the elements in its $k$-th row?
\end{exercise}

\subsection{Pentagramma mirificum}

Friezes of order 5 first appeared in connection with spherical trigonometry in the works of Nathaniel Torporley\footnote{Nathaniel Tarporley, also Torporley (1564--1632). English mathematician, astronomer and astrologer. He worked for some time in France as a secretary of Fran\c{c}ois Vi\`ete.} and John Napier\footnote{John Napier (1550--1617). Scottish mathematician, physisist and astronomer, inventor of logarithms and the decimal point.}. They were further developed in the works of Carl Friedrich Gauss\footnote{Carl Friedrich Gauss (1777–1855) was a German mathematician, physicist, and astronomer. He is credited with fundamental results in algebra, geometry, number theory, analytical mechanics, and the theory of magnetism. He was called \emph{Princeps Mathematicorum}, the Prince of Mathematicians. The monument to Gauss in his native Braunschweig is adorned with a 17-pointed star, as Gauss was the first to describe the construction of a regular 17-sided polygon using a compass and straightedge.}. Their appearance is related to the following problem in spherical geometry.

Consider a five-pointed star on the unit sphere where all angles are right angles (see Fig.~\ref{fig:pentagram}). In particular, this means that point $A'$ is one of the poles for the great circle $CD$, while $B'$ is a pole for the great circle $DE$, etc.

\begin{figure}[h!]
\includegraphics[width=7cm]{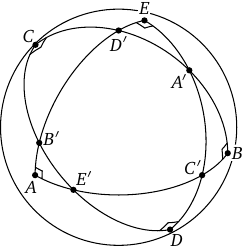}\quad
\includegraphics[width=7.5cm]{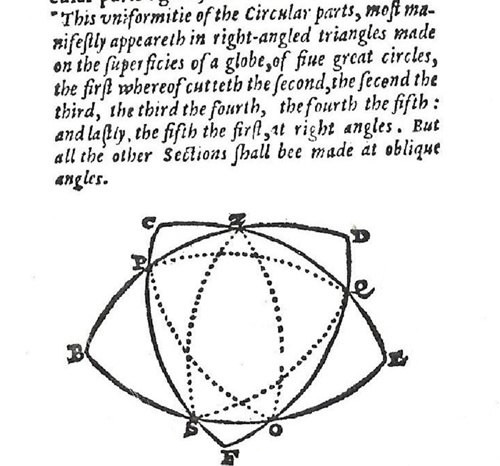}
\caption{Pentagramma mirificum. Right: illustration from Napier's work, 1614.}\label{fig:pentagram}
\end{figure}

Denote the squares of tangents of the side lengths of pentagon $A'B'C'D'E'$  by $\alpha,\dots,\epsilon$:
\[
\alpha=\tan^2 B'E',\quad \beta=\tan^2 E'C',\quad \gamma=\tan^2 A'C',\quad \delta=\tan^2A'D',\quad \epsilon=\tan^2D'B'.
\]
It turns out that these five quantities are related by the following equations, which allow obtaining the other three from any two of them:
\begin{equation}\label{eq:angles}
1+\alpha=\gamma\delta;\qquad 1+\beta=\delta\epsilon;\qquad
1+\gamma=\epsilon\alpha;\qquad 1+\delta=\alpha\beta;\qquad 1+\epsilon=\beta\gamma.
\end{equation}

\begin{exercise}[optional, for spherical geometry enthusiasts] Verify these formulas.
\end{exercise}

Thus, these quantities are elements of the following order 5 frieze:
\[
\begin{array}{ccccccccccccccccccccc}
\dots &1 && 1&& 1&& 1&& 1&& 1& \dots\\
&\dots & \alpha && \beta  && \gamma && \delta &&\epsilon &\dots\\ 
&&\dots & \delta && \epsilon && \alpha &&\beta &&\gamma & \dots\\
&\dots & 1&& 1&& 1&& 1&& 1&& 1& \dots\\
\end{array}
\]

\begin{exercise} Using relations (\ref{eq:angles}), prove the following equality (also due to Gauss):
\[
\alpha\beta\gamma\delta\epsilon = 3+\alpha+\beta+\gamma+\delta+\epsilon=\sqrt{(1+\alpha)(1+\beta)(1+\gamma)(1+\delta)(1+\epsilon)}.
\]
Gauss referred to this equality as to \emph{sch\"one Gleichung}, ``a beautiful equation''.
\end{exercise}

\begin{small}
In more modern terms, we can say that friezes describe the \emph{moduli space} $\mathcal{M}_{0,5}$ of five points on a projective line $\PP^1$. Specifically, consider five distinct points $z_1,\dots,z_5$ on $\PP^1$, up to the action of $\PGL(2)$. For any four of them we can write their cross-ratio. Let
\[
u_1=[z_1:z_2:z_3:z_4]=\frac{(z_3-z_1)(z_4-z_2)}{(z_3-z_2)(z_4-z_1)},\quad u_2=[z_2:z_3:z_4:z_5],\quad\dots
\]
and so on. Thus we obtain five cross-ratios $u_1,\dots,u_5$. It can be shown that they satisfy relations similar to (\ref{eq:angles}). Therefore, such quintuples (and, consequently, friezes with four rows) parameterize five points on $\PP^1$ up to a projective coordinate change. The details are left as an exercise for the interested reader.
\end{small}


\section{Continuants}

\subsection{Recurrence relation}\label{ssec:rec}

Now let us consider friezes with arbitrarily many rows. Our immediate goal is to obtain an expression for elements of the third and subsequent rows of a frieze in terms of elements of the second row.

Consider a frieze with second row $\dots,a_1,a_2,a_3,\dots$. Clearly, each element of the third row can be expressed through two elements of the second row that are directly above it (left and right). Furthermore, each element of the fourth row can be found by the unimodularity rule through \emph{three} elements of the second row above it, and so on. Let us denote elements of the third row by $a_{12},a_{23},\dots$, the fourth row by $a_{13}$, $a_{24}$, etc., as shown below:
\[
\begin{array}{ccccccccccccccccccccc}
\dots &1 && 1&& 1&& 1& &1&\dots\\
&\dots & a_1 && a_2  && a_3 &&a_4 & \dots\\
&& \dots & a_{12} && a_{23} && a_{34} &\dots\\
&&&\dots & a_{13}& &a_{24}& \dots \\
&&&&\dots & a_{14} &\dots\\
\end{array}
\]

It is easy to express elements of the third row:
\[
a_{12}=a_1a_2-1,\qquad a_{23}=a_2a_3-1.
\]
From them we can express elements of the fourth row:
\[
a_{13}=\frac{a_{12}a_{23}-1}{a_2}=a_1a_2a_3-a_1-a_3,
\]
the fifth row:
\[
a_{14}=\frac{a_{13}a_{24}-1}{a_{23}}=\dots=a_1a_2a_3a_4-a_1a_2-a_1a_4-a_3a_4+1,
\]
the sixth row:
\[
a_{15}=a_1a_2a_3a_4a_5-a_1a_2a_3-a_1a_2a_5-a_1a_4a_5-a_3a_4a_5+a_1+a_3+a_5,
\]
and so on (do these calculations yourself!).

Looking carefully at these expressions, we can notice a pattern they follow.

\subsection{Morse codes}\label{ssec:morse}

Consider $n$ points in a row, labeled by variables $a_1,\dots,a_n$. Now connect some pairs of adjacent points with the condition that each point can be connected to at most one neighbor. We obtain a ``Morse code'' of dots and dashes: a configuration like the following one.
\[
\stackrel{a_1}{\circ}-\stackrel{a_2}{\circ}\quad\stackrel{a_3}{\circ}\quad\stackrel{a_4}{\circ}\quad\stackrel{a_5}{\circ}-\stackrel{a_6}{\circ}\quad\stackrel{a_7}{\circ}
\]

To such a configuration we can associate a monomial: the product of variables corresponding to \emph{dots}, taken with a ``$+$'' or ``$-$'' sign depending on the parity of the number of \emph{dashes}. We will call this monomial the \emph{weight} of the Morse code $M$ and denote it by $w(M)$. Thus, the configuration in the previous figure corresponds to the expression $w(M)=(-1)^2a_3a_4a_7=a_3a_4a_7$.

\begin{example} All Morse codes and their corresponding monomials for $n$ from 1 to 5 are shown in Table~\ref{tab:morse}.
\end{example}

\begin{table}
\begin{tabular}{|l|c|c|}
\hline
$n=1$ & $\circ$ & $a_1$\\
\hline
$n=2$ & ${\circ}\quad{\circ}$ & $a_1a_2$\\
 & ${\circ}-{\circ}$ & $-1$\\
 \hline
$n=3$ & ${\circ}\quad{\circ}\quad{\circ}$ & $a_1a_2a_3$\\
 & ${\circ}\quad{\circ}-{\circ}$ & $-a_1$\\
  & ${\circ}-{\circ}\quad{\circ}$ & $-a_3$\\
\hline
$n=4$ &
$\circ\quad\circ\quad\circ\quad\circ$ & $a_1a_2a_3a_4$\\
&  $\circ\quad\circ\quad\circ-\circ$ & $-a_1a_2$\\
& $\circ\quad\circ-\circ\quad\circ$ & $-a_1a_4$\\\
& $\circ-\circ\quad\circ\quad\circ$ &$-a_3a_4$\\
& $\circ-\circ\quad\circ-\circ$ & $1$\\
\hline
$n=5$ &
$\circ\quad\circ\quad\circ\quad\circ\quad\circ$ & $a_1a_2a_3a_4a_5$\\
&$\circ\quad\circ\quad\circ\quad\circ- \circ$ & $-a_1a_2a_3$
\\
&$\circ\quad\circ\quad\circ-\circ\quad\circ$ & $-a_1a_2a_5$\\
&$\circ\quad\circ-\circ\quad\circ\quad\circ$ & $-a_1a_4a_5$\\
&$\circ-\circ\quad\circ\quad\circ\quad\circ$ & $-a_3a_4a_5$\\
&$\circ\quad\circ-\circ\quad\circ-\circ$ & $a_1$\\
&$\circ-\circ\quad\circ\quad\circ-\circ$ & $a_3$\\
&$\circ-\circ\quad\circ-\circ\quad\circ$ & $a_5$\\
\hline
\end{tabular}
\caption{Morse codes for $n\leq 5$}\label{tab:morse}	
\end{table}

\begin{definition}\label{def:euler} The \emph{continuant $V_n(a_1,\dots,a_n)$ of order $n$} is defined as the sum of monomials corresponding to all possible Morse codes on $n$ vertices.	
\end{definition}

\begin{remark}
We set the zero order continuant (that does not depend upon any arguments) to be $V_0=1$. Moreover, it is sometimes convenient to consider $V_n=0$ when $n$ is negative.
\end{remark}

Table~\ref{tab:morse} allows us to compute continuants of orders up to 5. We see that we get exactly the same expressions as for the frieze elements computed in the previous section. We will soon prove this claim, but first let us establish some more properties of continuants. We  start with a recurrence relation.

\begin{prop}\label{prop:contrecurrence}
Continuants satisfy the recurrence relation
\begin{equation}\label{eq:cont}
V_n(a_1,\dots,a_n)=a_nV_{n-1}(a_1,\dots,a_{n-1})-V_{n-2}(a_1,\dots,a_{n-2})
\end{equation}
with the initial condition $V_0=1$, $V_1=a_1$.
\end{prop}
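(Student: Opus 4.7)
The plan is to prove the recurrence by a direct combinatorial decomposition of Morse codes, grouping them according to the behavior at the rightmost vertex. The initial conditions $V_0=1$ and $V_1=a_1$ follow immediately from the definition: there is exactly one Morse code on $0$ vertices (the empty one, with empty product $1$), and exactly one Morse code on $1$ vertex (a single dot, with weight $a_1$).

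For the inductive step, I will partition the set $\mathcal{M}_n$ of Morse codes on $n$ vertices into two classes according to the status of vertex $n$:

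\smallskip

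\emph{Case 1: vertex $n$ is a dot.} Then it is not connected to vertex $n-1$ by a dash, and the restriction to vertices $1,\dots,n-1$ is an arbitrary Morse code $M' \in \mathcal{M}_{n-1}$. The number of dashes in $M$ equals the number of dashes in $M'$, and the product of dot-variables in $M$ equals the product of dot-variables in $M'$ multiplied by $a_n$. Hence $w(M) = a_n \, w(M')$, and summing over $M'$ gives a total contribution of $a_n V_{n-1}(a_1,\dots,a_{n-1})$.

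\smallskip

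\emph{Case 2: vertex $n$ is the right endpoint of a dash.} Then vertex $n-1$ must also belong to this dash, so neither vertex $n-1$ nor vertex $n$ is a dot and neither is connected to any other vertex. The restriction to vertices $1,\dots,n-2$ is an arbitrary Morse code $M'' \in \mathcal{M}_{n-2}$. The number of dashes in $M$ is one more than in $M''$, and the product of dot-variables in $M$ equals that of $M''$. Hence $w(M) = -w(M'')$, and summing gives $-V_{n-2}(a_1,\dots,a_{n-2})$.

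\smallskip

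Adding the two contributions yields exactly the identity~(\ref{eq:cont}). There is no real obstacle; the only care required is tracking signs and checking that the bijections with $\mathcal{M}_{n-1}$ and $\mathcal{M}_{n-2}$ are indeed bijections (which is clear, since removing the single dot, respectively the terminal dash, is inverse to appending them). The argument is valid for all $n \ge 2$; for $n=2$ it reproduces $V_2 = a_1 a_2 - 1$, consistent with Table~\ref{tab:morse}.
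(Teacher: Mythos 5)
Your proposal is correct and follows essentially the same route as the paper: both partition the Morse codes on $n$ vertices according to whether the last vertex is a dot (yielding the term $a_nV_{n-1}$) or the endpoint of a dash (yielding $-V_{n-2}$). Your write-up is slightly more explicit about the bijections and the initial conditions, but the argument is the same.
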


\begin{proof}
Take any Morse code on $n$ vertices. Look at its last vertex. There are two possibilities. Either it corresponds to a dot, in which case this configuration $M$ corresponds to a Morse code $M'$ on vertices $1,\dots,{n-1}$; here $w(M)=a_nw(M')$, since in code $M$ the last vertex is counted while in configuration $M'$ it is not. Or, conversely, the last vertex is connected to the penultimate one by a dash; in this case we remove these two vertices and denote the resulting Morse code on vertices $1,\dots,n-2$ by $M''$. Then the weights of codes $M$ and $M''$ will differ by sign: $w(M)=-w(M'')$.

Thus, all codes starting with a dot will contribute $a_nV_{n-1}(a_1,\dots,a_{n-1})$ to the continuant $V_n(a_1,\dots,a_n)$, while those starting with a dash will contribute $-V_{n-2}(a_1,\dots,a_{n-2})$. This gives us the desired relation~(\ref{eq:cont}).
\end{proof}

We leave further properties of continuants as exercises for the reader.

\begin{exercise}\label{ex:continuant} Prove the following identities:
\begin{enumerate}
    \item $V_n(a_1,\dots,a_n)=V_n(a_n,\dots,a_1)$;
    \item $V_n(a_1,\dots,a_{n-1},0)=-V_{n-2}(a_1,\dots,a_{n-2})$;
    \item $V_n(a_1,\dots,a_n)=a_1V_{n-1}(a_2,\dots,a_{n})- V_{n-2}(a_3,\dots,a_{n})$.
\end{enumerate}
\end{exercise}

\subsection{Unimodularity relation for continuants}\label{ssec:unimod}

\begin{theorem}\label{thm:unimod}
For any $a_1,\dots,a_{n+1}$, the following relation holds:
\begin{equation}\label{eq:unimod}
    V_{n}(a_1,\dots,a_n)V_n(a_2,\dots,a_{n+1})=V_{n-1}(a_2,\dots,a_n)V_{n+1}(a_1,\dots,a_{n+1})+1.
\end{equation}
\end{theorem}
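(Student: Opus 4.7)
The plan is to prove the identity by induction on $n$, using the three-term recurrence from Proposition~\ref{prop:contrecurrence} together with its ``left'' version from Exercise~\ref{ex:continuant}(3).

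For the base case $n=1$, the identity reduces to $V_1(a_1)V_1(a_2)=V_0\cdot V_2(a_1,a_2)+1$, i.e.\ $a_1a_2=(a_1a_2-1)+1$, which is immediate. (One can also sanity-check $n=2$ directly using Table~\ref{tab:morse}.)

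For the inductive step, I would expand both sides by applying the recurrence to the factor containing $a_{n+1}$. On the right-hand side, use the standard recurrence
\[
V_{n+1}(a_1,\dots,a_{n+1})=a_{n+1}V_n(a_1,\dots,a_n)-V_{n-1}(a_1,\dots,a_{n-1}).
\]
On the left-hand side, apply the same recurrence to $V_n(a_2,\dots,a_{n+1})$. The terms containing $a_{n+1}$ are identical on both sides; after cancellation, the identity to prove becomes
\[
V_n(a_1,\dots,a_n)V_{n-2}(a_2,\dots,a_{n-1})+1=V_{n-1}(a_1,\dots,a_{n-1})V_{n-1}(a_2,\dots,a_n),
\]
which is precisely the statement of the theorem with $n$ replaced by $n-1$, so the inductive hypothesis closes the argument.

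The only step that requires a bit of care is bookkeeping: the recurrence~(\ref{eq:cont}) removes the \emph{last} argument, so when applied to $V_n(a_2,\dots,a_{n+1})$ it yields $a_{n+1}V_{n-1}(a_2,\dots,a_n)-V_{n-2}(a_2,\dots,a_{n-1})$, and one must check that the indices on both sides of the reduced identity really match those of the inductive hypothesis. No other obstacle appears. (As a remark for later: once the paper introduces the matrix form of continuants as products of $\left(\begin{smallmatrix} a_i & -1\\ 1 & 0\end{smallmatrix}\right)$, this identity will follow at once by taking determinants, since each factor has determinant $1$; but within the tools available in this section, the short induction above is the natural route.)
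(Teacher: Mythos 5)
Your proposal is correct and follows essentially the same route as the paper's own algebraic proof: induction on $n$, applying the recurrence~(\ref{eq:cont}) to the two factors containing $a_{n+1}$, cancelling the common term $a_{n+1}V_n(a_1,\dots,a_n)V_{n-1}(a_2,\dots,a_n)$, and invoking the inductive hypothesis. (The paper also gives a second, combinatorial proof via an involution on pairs of Morse codes, but your argument matches its primary one.)
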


We provide two proofs of this theorem: an algebraic and a combinatorial one.

\begin{proof}[Algebraic proof]
We prove the theorem by induction on $n$. The base case for $n=1$ is obvious.

To prove the induction step, write the required relation for $n$ and rewrite all terms involving $a_{n+1}$ using relation~(\ref{eq:cont}):
\[
V_{n+1}(a_1,\dots,a_{n+1})=a_{n+1}V_n(a_1,\dots,a_n)-V_{n-1}(a_1,\dots,a_{n-1});
\]
\[
V_n(a_2,\dots,a_{n+1})=a_{n+1}V_{n-1}(a_2,\dots,a_n)-V_{n-2}(a_2,\dots,a_{n-1}).
\]
Then the equality becomes:
\begin{multline*}
V_{n}(a_1,\dots,a_n)(a_{n+1}V_{n-1}(a_2,\dots,a_n)-V_{n-2}(a_2,\dots,a_{n-1}))=\\=V_{n-1}(a_2,\dots,a_n)(a_{n+1}V_n(a_1,\dots,a_n)-V_{n-1}(a_1,\dots,a_{n-1}))+1.
\end{multline*}

Both sides of the equality contain the term $a_{n+1}V_{n-1}(a_2,\dots,a_n)V_{n}(a_1,\dots,a_n)$; canceling it out, we obtain:
\[
-V_{n}(a_1,\dots,a_n)V_{n-2}(a_2,\dots,a_{n-1})=-V_{n-1}(a_1,\dots,a_{n-1}))V_{n-1}(a_2,\dots,a_n)+1,
\]
which holds by the induction hypothesis. The theorem is proved.
\end{proof}

\begin{proof}[Combinatorial proof]
    Consider each continuant in the left-hand side of~(\ref{eq:unimod}) as a sum of monomials corresponding to Morse codes on the sets $\{1,\dots,n\}$ and $\{2,\dots,n+1\}$, respectively. We represent these Morse codes on a single diagram, using dashed lines for the first code and solid lines for the second, as shown below. Thus, dashed lines may appear in all positions except the last one, while solid lines may appear in all positions except the first one. Therefore, the left-hand side of~(\ref{eq:unimod}) equals the sum of monomials obtained from all possible \emph{pairs} of Morse codes under these conditions.

\begin{figure}[h!]
\[
\xymatrix{\circ\ar@{--}[r] &\circ\ar@{-}[r] &\circ\ar@{--}[r] &\circ &\circ &\circ\ar@<-2pt>@{--}[r]\ar@{^-}@<2pt>[r]&\circ&\circ\ar@{-}[r]&\circ}
\]
\caption{A pair of Morse codes}\label{fig:morse1}
\end{figure}
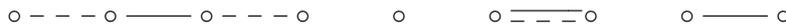

    Do the same for the product of continuants on the right-hand side of~(\ref{eq:unimod}): each product of monomials from $V_{n-1}(a_2,\dots,a_n)$ and $V_{n+1}(a_1,\dots,a_{n+1})$ can be interpreted as a pair of Morse codes on $\{1,\dots,n+1\}$, where the first code (dashed lines) cannot have lines in the first and last positions, while the second (solid lines) has no such restrictions.

    Consider any configuration $(M_1,M_2)$, where $M_1$ and $M_2$ are Morse codes on $\{1,\dots,n\}$ and $\{2,\dots,n+1\}$, respectively. It corresponds to the monomial $w(M_1)w(M_2)$ from the left-hand side of~(\ref{eq:unimod}). From it, construct a new configuration $(M_1',M_2')$, where $M_1'$ is a Morse code on $\{1,\dots,n+1\}$ and $M_2'$ is a code on $\{2,\dots,n\}$.
    
    If there is no (dashed) line between $1$ and $2$ in $M_1$, then $M_1$ can be viewed as a code on vertices $2,\dots,n$. Meanwhile, the solid-line code $M_2$ can be viewed as a code on vertices $1,\dots,n+1$ with no line in the first position. In this case, set $(M_1',M_2')=(M_1,M_2)$. 
    
Now suppose vertices 1 and 2 are connected by a dashed line in $M_1$. Since $M_2$ includes vertices starting from the 2nd, there is no solid line between vertices 1 and 2. Construct $(M_1',M_2')$ from $(M_1,M_2)$ as follows: consider the longest path starting from vertex 1 and consisting of alternating dashed and solid lines. Replace each dashed line with a solid one and each solid line with a dashed one. This yields a new pair $(M_1',M_2')$, where the first position has a solid line and no dashed line. This pair has the same weight as $(M_1,M_2)$. 

Figure~\ref{fig:morse2} shows the pair of codes obtained by this operation from those in Figure~\ref{fig:morse1}.
\begin{figure}[h!]
\[
\xymatrix{\circ\ar@{-}[r] &\circ\ar@{--}[r] &\circ\ar@{-}[r] &\circ &\circ &\circ\ar@<-2pt>@{--}[r]\ar@{^-}@<2pt>[r]&\circ&\circ\ar@{-}[r]&\circ}
\]
\caption{Result of applying the involution to the pair of Morse codes}\label{fig:morse2}
\end{figure}
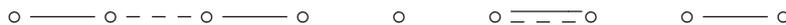

The pair $(M_1',M_2')$ will ``almost always'' correspond to a monomial from the right-hand side. Moreover, this procedure can ``almost always'' be reversed: from ``almost'' every $(M_1',M_2')$, we can uniquely reconstruct $(M_1,M_2)$, with equal weights. 

It remains to clarify what does ``almost always'' mean. We distinguish between the two cases: when $n$ is even and when it is odd.

If $n$ is even, the only configuration $(M_1,M_2)$ for which this procedure fails (i.e., no corresponding right-hand monomial exists) is:
\[
\xymatrix{\circ\ar@{--}[r] &\circ\ar@{-}[r] &\circ\ar@{--}[r] &\circ\ar@{-}[r] &\circ\ar@{--}[r] &\circ}
\]
Here, \emph{all} vertices are connected by alternating dashed and solid lines, with both first and last lines dashed (the number of vertices is even, so the number of lines between them is odd), and the weight of $(M_1,M_2)$ is $(-1)^{n-1}=-1$. Thus, the product from the left-hand side of~(\ref{eq:unimod}) is less by 1 than the product from the right-hand side.

If $n$ is odd, each $(M_1,M_2)$ has a unique image $(M_1',M_2')$, and exactly one right-hand configuration has no preimage:
\[
\xymatrix{\circ\ar@{--}[r] &\circ\ar@{-}[r] &\circ\ar@{--}[r] &\circ\ar@{-}[r] &\circ}
\]
In this case, the product in the right-hand side of~(\ref{eq:unimod}) is again greater by 1 than the product from the left-hand side.
\end{proof}

\subsection{Integer friezes}\label{ssec:integer}

The results of the previous section immediately yield the following theorem.

\begin{theorem} An element $a_{1n}$ in the $(n+1)$-th row of a frieze having numbers $a_1,\dots,a_n$ in the second row above it is equal to the continuant $V_n(a_1,\dots,a_n)$.    
\end{theorem}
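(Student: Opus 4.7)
The plan is to argue by induction on $n$, using the frieze's defining unimodularity rule together with the continuant identity established in Theorem~\ref{thm:unimod}. The key observation is that both relations have exactly the same shape $ad-bc=1$, so once the inductive hypothesis identifies three vertices of a diamond with continuants of the correct orders, the fourth is forced to be the continuant predicted by \eqref{eq:unimod}.

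For the base cases I would check that the first row (all $1$'s) corresponds to $V_0=1$ and the second row (the entries $a_i$) to $V_1(a_i)=a_i$; both are immediate from the definition of the continuant and the conventions $V_0=1$, $V_{-1}=0$ recorded in the Remark after Definition~\ref{def:euler}.

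For the inductive step, assume every entry of rows $\le n+1$ equals the corresponding continuant. To handle $a_{1,n+1}$ in row $n+2$, I would write out the diamond
\[
\begin{array}{ccc}
& a_{2,n} &\\
a_{1,n} && a_{2,n+1}\\
& a_{1,n+1} &
\end{array}
\]
so that the frieze rule gives $a_{1,n}\,a_{2,n+1} - a_{2,n}\,a_{1,n+1} = 1$. By the inductive hypothesis,
\[
a_{1,n}=V_n(a_1,\dots,a_n),\quad a_{2,n+1}=V_n(a_2,\dots,a_{n+1}),\quad a_{2,n}=V_{n-1}(a_2,\dots,a_n).
\]
Comparing with \eqref{eq:unimod} applied to the tuple $(a_1,\dots,a_{n+1})$ shows that the unique value of $a_{1,n+1}$ satisfying the diamond relation is $V_{n+1}(a_1,\dots,a_{n+1})$. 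Division by $V_{n-1}(a_2,\dots,a_n)$ is legitimate because all entries of the frieze are positive, hence nonzero.

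I do not expect any real obstacle: the hard work has already been done in Theorem~\ref{thm:unimod}, which matches the frieze rule term-for-term. The only point requiring a bit of care is keeping track of which continuant sits at which vertex of the diamond; once that bookkeeping is fixed, the substitution is purely mechanical and the induction closes immediately.
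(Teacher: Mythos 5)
Your proposal is correct and is exactly the argument the paper intends: the text introduces this theorem with the remark that it follows immediately from the unimodularity relation for continuants (Theorem~\ref{thm:unimod}), i.e., from the fact that \eqref{eq:unimod} matches the frieze diamond rule term for term, which is precisely the induction you carry out. Your bookkeeping of the diamond (top $a_{2,n}$, left $a_{1,n}$, right $a_{2,n+1}$, bottom $a_{1,n+1}$) and the observation that division by $a_{2,n}>0$ is legitimate fill in the details the paper leaves implicit.
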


This theorem explains the aforementioned ``integrality phenomenon'': all elements of a frieze with an integer second row $(a_1,a_2,\dots)$ turn out to be integers, even though their computation would seemingly require division.

\begin{corollary}
    A frieze whose second row elements are positive integers is entirely integer-valued.
\end{corollary}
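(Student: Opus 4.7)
The plan is to derive the corollary as an essentially immediate consequence of the preceding theorem together with the combinatorial definition of continuants. By that theorem, every entry of the frieze lying $k$ rows below the second row (for $k \geq 0$) is equal to a continuant $V_{k+1}(a_i, a_{i+1}, \dots, a_{i+k})$ in consecutive entries of the second row. So it suffices to show that continuants of integer arguments are integers.

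First I would invoke Definition 2.2: $V_n(a_1, \dots, a_n)$ is a sum of weights $w(M)$ of Morse codes $M$ on $n$ vertices, where each $w(M)$ is $\pm 1$ times a product of some subset of the $a_j$'s. Thus $V_n$ is, tautologically, a polynomial in $a_1, \dots, a_n$ with coefficients in $\{-1, 0, +1\} \subset \mathbb{Z}$. (Equivalently, this follows inductively from the recurrence in Proposition~\ref{prop:contrecurrence}, with integer initial data $V_0 = 1$, $V_1 = a_1$.) In particular, substituting positive integers $a_1, \dots, a_n \in \mathbb{Z}_{>0}$ yields an integer value.

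Combining these two observations: the first row of the frieze consists of $1$'s, the second row consists of positive integers by hypothesis, and every subsequent entry equals some $V_k$ evaluated on consecutive entries of the second row, hence is an integer. The bottom row (of $1$'s, when the frieze is finite) and the hypothetical row of $0$'s below it obviously present no problem either.

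There is no real obstacle here; the only thing worth being careful about is that the theorem is stated for the entry $a_{1n}$ in row $n+1$ computed from $a_1, \dots, a_n$ directly above it, and one must note that by the translation invariance of the frieze construction the same formula applies to every entry (with the appropriate consecutive window of the second row), so the conclusion covers the whole frieze and not just one diagonal.
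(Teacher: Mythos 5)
Your proof is correct and follows the same route as the paper: the paper's justification is the single observation that a continuant is a polynomial in $a_1, a_2, \dots$ with integer coefficients, which is exactly your central point. Your additional remarks on the Morse-code definition versus the recurrence, and on translation invariance of the frieze, merely flesh out details the paper leaves implicit.
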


Indeed, a continuant is a polynomial in $a_1,a_2,\dots$ with integer coefficients.

\subsection{Euler's identity}The theorem from the previous section admits a generalization known as \emph{Euler's identity\footnote{Leonhard Euler (1707–1783) was a Swiss, Prussian, and Russian mathematician. He authored more than 800 works across a vast range of fields, including mathematics, mechanics, physics, astronomy, and other sciences. He is considered one of the greatest mathematicians in history.} for continuants}.

\begin{theorem}\label{thm:euler}
    For any $m,\ell,n$, the following identity holds:
\begin{multline*}
V_{m+n}(a_1,\dots,a_{m+n})V_\ell(a_{m+1},\dots,a_{m+l})-V_{m+\ell}(a_1,\dots,a_{m+\ell})V_{n}(a_{m+1},\dots,a_{m+n})+\\
+V_{m-1}(a_1,\dots,a_{m-1})V_{n-\ell-1}(a_{m+\ell+2},\dots,a_{m+n})=0.
\end{multline*}
\end{theorem}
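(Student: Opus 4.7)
The plan is to translate Euler's identity into a statement about $2\times 2$ matrices, where it reduces to a one-line calculation using determinants. First I would establish the matrix representation of continuants. Set $M(a)=\begin{pmatrix} a & -1 \\ 1 & 0\end{pmatrix}$; since $\det M(a)=1$, any product of such matrices has determinant $1$. Using the recurrence of Proposition~\ref{prop:contrecurrence}, a straightforward induction on $n$ shows
\[
M(a_1)\cdots M(a_n)=
\begin{pmatrix}
V_n(a_1,\dots,a_n) & -V_{n-1}(a_1,\dots,a_{n-1})\\
V_{n-1}(a_2,\dots,a_n) & -V_{n-2}(a_2,\dots,a_{n-1})
\end{pmatrix}.
\]

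Next I would split the full product $M(a_1)\cdots M(a_{m+n})$ into the three blocks naturally suggested by the statement,
\[
A=M(a_1)\cdots M(a_m),\qquad B=M(a_{m+1})\cdots M(a_{m+\ell}),\qquad C=M(a_{m+\ell+1})\cdots M(a_{m+n}),
\]
and read off matrix entries using the formula above. All six continuants appearing in the theorem become entries of these products: $(ABC)_{1,1}=V_{m+n}(a_1,\dots,a_{m+n})$, $B_{1,1}=V_\ell(a_{m+1},\dots,a_{m+\ell})$, $(AB)_{1,1}=V_{m+\ell}(a_1,\dots,a_{m+\ell})$, $(BC)_{1,1}=V_n(a_{m+1},\dots,a_{m+n})$, $A_{1,2}=-V_{m-1}(a_1,\dots,a_{m-1})$, and $C_{2,1}=V_{n-\ell-1}(a_{m+\ell+2},\dots,a_{m+n})$.

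Euler's identity then follows from the purely algebraic relation
\[
(ABC)_{1,1}\,B_{1,1}-(AB)_{1,1}(BC)_{1,1}=\det(B)\cdot A_{1,2}\,C_{2,1},
\]
valid for \emph{any} three $2\times 2$ matrices. This is a direct expansion: writing out both sides, all terms cancel except $A_{1,2}\cdot C_{2,1}\cdot(B_{1,1}B_{2,2}-B_{1,2}B_{2,1})$. Since $\det B=1$ in our setting, substituting the entries listed above yields exactly the identity of the theorem, with the sign $A_{1,2}=-V_{m-1}$ accounting for the $+V_{m-1}V_{n-\ell-1}$ term.

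The only real obstacle is careful index bookkeeping: one must verify that $C$, which is a product of $n-\ell$ matrices with arguments $a_{m+\ell+1},\dots,a_{m+n}$, indeed has $(2,1)$-entry equal to $V_{n-\ell-1}(a_{m+\ell+2},\dots,a_{m+n})$, and track the boundary conventions $V_0=1$, $V_{-1}=0$ at edge cases. As a sanity check, specializing $m=1$ and $\ell=n-1$ reduces $C$ to the single matrix $M(a_{n+1})$ with $C_{2,1}=V_0=1$, and $V_{m-1}=V_0=1$, so Euler's identity collapses to the unimodularity relation of Theorem~\ref{thm:unimod}, which this approach thus recovers as a special case and generalizes at once.
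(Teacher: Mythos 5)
Your proof is correct, and it supplies something the paper deliberately omits: the text states Theorem~\ref{thm:euler} without proof, suggesting only a double induction on two of the three parameters or a reference to Ustinov's paper. Your matrix argument is complete as written. The lemma identifying $M(a_1)\cdots M(a_n)$ with the matrix of continuants is exactly the content of Proposition~\ref{prop:matrixneg} combined with the continuant expression for negative convergents, and the induction you sketch is the same one the paper uses there, so that ingredient is already available. The purely algebraic identity $(ABC)_{1,1}B_{1,1}-(AB)_{1,1}(BC)_{1,1}=\det(B)\,A_{1,2}C_{2,1}$ checks out by direct expansion (everything cancels except $A_{1,2}C_{2,1}\det B$), and your bookkeeping of which continuant sits in which entry is right: $C$ is a product of $n-\ell$ matrices, so $C_{2,1}=V_{n-\ell-1}(a_{m+\ell+2},\dots,a_{m+n})$, and the sign of $A_{1,2}=-V_{m-1}$ produces the $+V_{m-1}V_{n-\ell-1}$ term. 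Your sanity check is also correct, and in fact it quietly fixes a typo in the paper, which claims the specialization is $m=1$, $n=\ell-1$ when it should be $m=1$, $\ell=n-1$ as you have it. Two minor remarks: the phrase ``for any $m,\ell,n$'' should be read as $0\leq\ell\leq n$ and $m\geq 1$ (otherwise $B$ or $C$ is not a genuine product and the identity needs extra conventions beyond $V_{-1}=0$), and it is worth noting that your argument recovers the paper's determinantal proof of Theorem~\ref{thm:unimod} via the Lewis Carroll identity as the special case $B=I$ shifted by one index, so the two approaches are close cousins even though the paper never carries this one out in general.
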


When $m=1$ and $n=\ell-1$, this precisely gives the unimodularity relation for continuants.

We will not use Euler's identity further and present it here without proof. It can be proved using double induction on two of its three parameters (this is left to the reader as an optional exercise). A short and elegant proof of this identity, due to A.\,Ustinov, can be found in his paper~\cite{Ustinov06}.
\bigskip

{\small
In the remaining part of this chapter we discuss the connection between continuants and determinants and explain alternative methods for obtaining the results mentioned above. Readers may skip this part without loss of continuity.

\subsection{Determinantal Expression for Continuants}\label{ssec:det} Continuants admit another useful expression: an $n$-th order continuant can be represented as the determinant of a tridiagonal matrix of the same order.

\begin{prop}\label{prop:contdet} Continuants admit the following determinant expression:
\begin{equation}\label{eq:continuant}
V_n(a_1,\dots,a_n)=\begin{vmatrix} a_1 &1 & 0 &\dots & 0 &0\\
1& a_2 & 1 &\dots &0 &0\\
0 & 1 & a_3 & \dots & 0& 0\\
\vdots &\vdots &\vdots &&\vdots &\vdots\\
0 & 0 & 0 &\dots & a_{n-1} & 1\\
0 & 0 & 0 &\dots &1& a_{n} \\
\end{vmatrix}.
\end{equation}   
\end{prop}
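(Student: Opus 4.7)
My plan is to prove the proposition by induction on $n$, matching the recurrence satisfied by the determinants on the right-hand side of~(\ref{eq:continuant}) with the continuant recurrence from Proposition~\ref{prop:contrecurrence}.

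Let $D_n$ denote the determinant on the right-hand side of~(\ref{eq:continuant}). First I would dispose of the base cases: $D_1 = a_1 = V_1$, and a direct computation gives $D_2 = a_1 a_2 - 1 = V_2$. It is also consistent to set $D_0 = 1 = V_0$, which makes the recurrence self-starting.

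For the inductive step, the key idea is to expand $D_n$ along its last row. That row contains only two nonzero entries: a $1$ in position $(n,n-1)$ and $a_n$ in position $(n,n)$. The cofactor associated with $a_n$ is just $D_{n-1}$, the determinant of the tridiagonal matrix built from $a_1,\dots,a_{n-1}$. For the cofactor associated with the $1$ in position $(n,n-1)$, I would observe that after deleting row $n$ and column $n-1$, the resulting $(n-1)\times(n-1)$ matrix has a last column (coming from the original column $n$) whose only nonzero entry is a $1$ at the bottom. A second cofactor expansion along this column collapses that determinant to $D_{n-2}$. Keeping track of the signs, the overall contribution of the second term is $-D_{n-2}$.

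Putting the two expansions together yields $D_n = a_n D_{n-1} - D_{n-2}$, which is exactly the recurrence~(\ref{eq:cont}) satisfied by $V_n(a_1,\dots,a_n)$. Combined with the agreement of initial terms, induction gives $D_n = V_n(a_1,\dots,a_n)$. The only delicate point is bookkeeping of the two signs in the successive cofactor expansions; beyond that, no serious obstacle arises.
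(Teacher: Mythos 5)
Your proposal is correct and follows essentially the same route as the paper: induction with the base cases $n=0,1$, then expansion of the tridiagonal determinant along the last row to recover the recurrence $D_n = a_n D_{n-1} - D_{n-2}$ of Proposition~\ref{prop:contrecurrence}. The extra detail you supply about the second cofactor expansion and the sign bookkeeping is exactly the computation the paper leaves implicit.
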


\begin{proof} We prove this equality by induction. The base cases for $n=0$ and $n=1$ are obvious. Expanding the determinant in the right-hand side of~(\ref{eq:continuant}) along the last row yields exactly the recurrence relation~(\ref{eq:cont}).
\end{proof}

This representation of continuants also leads to the unimodularity condition. For this, we need a matrix identity known as the \emph{Lewis Carroll identity}\footnote{Lewis Carroll is a penname of Charles Lutwidge Dodgson (1832--1898), English mathematician, writer and photographer, author of ``Alice's Adventures in Wonderland'' and ``Through the Looking-Glass". The identity bearing his name was used in his 1865 work, though it had appeared earlier in works by Desnanot and Jacobi.

Pierre Desnanot (1777--1857) was a French high school teacher and inspector. He authored textbooks on arithmetic, as well as works on the application of mathematics to mechanical problems. He headed the Clermont-Ferrand school district.}.

Let $A=(a_{ij})$ be an arbitrary $n\times n$ matrix. Denote its determinant by $M$; furthermore, let $M_{i_1,\dots,i_k}^{j_1,\dots,j_k}$ denote its $(n-k)$-th order minor obtained by deleting rows $i_1,\dots,i_k$ and columns $j_1,\dots, j_k$. The Lewis Carroll identity establishes a relation between $M$ and minors of orders $n-1$ and $n-2$:
\begin{theorem}[The Lewis Carroll identity]\label{thm:lcidentity}
The following equality holds:
\begin{equation}\label{eq:lce}
M\cdot M_{1,n}^{1,n}=M_1^1\cdot M_n^n-M_n^1\cdot M_1^n.
\end{equation}
\end{theorem}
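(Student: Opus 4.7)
The plan is to construct a carefully chosen auxiliary $n\times n$ matrix $B$ such that the product $AB$ has an extremely sparse structure whose determinant is easy to compute, while $\det(B)$ itself encodes the right-hand side of~(\ref{eq:lce}) directly. The identity~(\ref{eq:lce}) will then drop out of $\det(A)\cdot\det(B)=\det(AB)$.

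Concretely, I would let $C_{ij}=(-1)^{i+j}M_i^j$ denote the cofactors of $A$ and define $B$ to be the $n\times n$ matrix whose first column is $(C_{11},C_{12},\dots,C_{1n})^T$, whose last column is $(C_{n1},C_{n2},\dots,C_{nn})^T$, and whose middle columns $2,\dots,n-1$ are the standard basis vectors $e_2,\dots,e_{n-1}$ (so an $(n-2)\times(n-2)$ identity block sits in the middle, with zeros elsewhere in these columns). The standard Laplace identity $\sum_k a_{ik}C_{jk}=M\,\delta_{ij}$ then gives $(AB)_{i1}=M\,\delta_{i1}$ and $(AB)_{in}=M\,\delta_{in}$, while the middle columns of $AB$ coincide with those of $A$.

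Thus $AB$ has $M$ at positions $(1,1)$ and $(n,n)$, zeros elsewhere in columns $1$ and $n$, and the same central $(n-2)\times(n-2)$ block as $A$. Expanding $\det(AB)$ along column $1$ and then along the last column should yield $\det(AB)=M^2\cdot M_{1,n}^{1,n}$. On the other side, $\det(B)$ is computed by successive expansion along the middle columns, each carrying a single $1$ on the diagonal, which reduces the calculation to the $2\times 2$ determinant of the corner entries $B_{11},B_{1n},B_{n1},B_{nn}$, namely $\det(B)=C_{11}C_{nn}-C_{1n}C_{n1}$. Converting cofactors back to minors (the sign factors $(-1)^{2n}$ and $(-1)^{(1+n)+(n+1)}$ are both $+1$), this equals $M_1^1M_n^n-M_1^nM_n^1$. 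Combining, $M\cdot(M_1^1M_n^n-M_1^nM_n^1)=M^2\cdot M_{1,n}^{1,n}$, and dividing by $M$ yields~(\ref{eq:lce}).

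The main conceptual obstacle is guessing the form of $B$ — once it is in hand, all the determinant computations are routine Laplace expansions along almost-empty columns, and the only bookkeeping is tracking signs in the cofactor-to-minor conversion. A small loose end is the case $M=0$, where the final division is invalid; this is handled by the usual polynomial argument: both sides of~(\ref{eq:lce}) are polynomials in the entries $a_{ij}$, so equality on the Zariski-open dense set $\{M\neq 0\}$ forces equality in general.
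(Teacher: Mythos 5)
Your proof is correct. Note that the paper does not actually prove Theorem~\ref{thm:lcidentity}: it explicitly omits the proof and points the reader to Zeilberger's combinatorial (bijective) argument, which is in the spirit of the paper's own Morse-code proof of Theorem~\ref{thm:unimod}. What you give instead is the classical algebraic proof via a bordered adjugate: your matrix $B$ is the adjugate of $A$ with its middle columns replaced by $e_2,\dots,e_{n-1}$, and the identity falls out of $\det(A)\det(B)=\det(AB)$ together with the Laplace relations $\sum_k a_{ik}C_{jk}=M\delta_{ij}$ (this is essentially the $2\times 2$ case of Jacobi's theorem on minors of the adjugate). All the computations you sketch check out: $\det(AB)=M^2M_{1,n}^{1,n}$ by expanding along the two sparse columns, $\det(B)=C_{11}C_{nn}-C_{1n}C_{n1}=M_1^1M_n^n-M_1^nM_n^1$ after the sign bookkeeping, and you correctly flag and dispose of the degenerate case $M=0$ (over an arbitrary commutative ring one would phrase this as a polynomial identity in the indeterminate entries, cancelling the nonzerodivisor $M$ in the polynomial ring, but for the paper's purposes your density argument suffices). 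The trade-off between the two routes: Zeilberger's proof is bijective and fits the paper's combinatorial theme, while yours is self-contained linear algebra requiring nothing beyond Laplace expansion, at the cost of having to guess the auxiliary matrix $B$.
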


We omit the proof here; the reader may devise it themself or look up in the literature. Note the combinatorial proof by D.\,Zeilberger\footnote{Doron Zeilberger (b. 1950) is an Israeli--American mathematician specializing in combinatorics and the theory of hypergeometric series. In 1995, he proved the famous alternating sign matrix conjecture by Mills, Robbins, and Rumsey. He has written a number of papers ``in collaboration'' with Shalosh B. Ekhad, his computer.} \cite{Zeilberger97}; it is conceptually similar to the proof of Theorem~\ref{thm:unimod} given above. 

\subsection{Computing continuants} \label{ssec:cont}

Let us compute the determinant $M=V_n(a_1,\dots,a_n)$ using the Lewis Carroll identity~(\ref{eq:lce}). A remarkable property of continuants is that the minors appearing in this  identity are either ones or lower-order continuants:
\[
M_1^1=V_{n-1}(a_2,\dots,a_n);\qquad M_n^n=V_{n-1}(a_1,\dots,a_{n-1});\qquad M_{1,n}^{1,n}=V_{n-2}(a_2,\dots,a_{n-1});
\]
while $M_1^n$ and $M_n^1$ are equal to 1 as the determinants of upper- and lower-triangular matrices with ones on the diagonal, respectively. Thus, the Lewis Carroll identity can be rewritten as follows:
\[
V_n(a_1,\dots,a_n)\cdot V_{n-2}(a_2,\dots,a_{n-1})=V_{n-1}(a_1,\dots,a_{n-1})\cdot V_{n-1}(a_2,\dots,a_{n})-1.
\]
In other words,
\begin{multline*}
1=V_n(a_1,\dots,a_n)\cdot V_{n-2}(a_2,\dots,a_{n-1})-V_{n-1}(a_1,\dots,a_{n-1})\cdot V_{n-1}(a_2,\dots,a_{n})=\\
=\begin{vmatrix}
V_{n-1}(a_1,\dots,a_{n-1}) & V_{n-2}(a_2,\dots,a_{n-1})\\
V_n(a_1,\dots,a_n) & V_{n-1}(a_2,\dots,a_{n})
\end{vmatrix}
.
\end{multline*}

This is exactly the unimodularity condition. Thus we obtain another proof of Theorem~\ref{thm:unimod}.

}


\section{Relations between frieze elements}\label{sec:relations}

\subsection{Relations between the second row and the Diagonal}

Like in the previous chapter, we start with a frieze of the form:
\[
\begin{array}{ccccccccccccccccccccc}
\dots &1 && 1&& 1&& 1& &1&\dots\\
&\dots & a_1 && a_2  && a_3 &&a_4 & \dots\\
&& \dots & a_{12} && a_{23} && a_{34} &\dots\\
&&&\dots & a_{13}& &a_{24}& \dots \\
&&&&\dots & a_{14} &\dots\\
&&&&&\dots&a_{15}&\dots
\end{array}
\]

We would like to establish relations between the elements of its second row $a_1,a_2,\dots$ and the diagonal elements $1,a_1,a_{12},a_{13},\dots$ (we will further refer to this diagonal as to the first one). We already know the relation expressing $a_{1k}$ through $a_1,\dots,a_k$: this is the continuant. However, continuants satisfy the recurrence relation given in Proposition~\ref{prop:contrecurrence}. This allows us to find the diagonal elements of the frieze one by one if its second row is known.

\begin{theorem}\label{thm:diagfrom2} The diagonal and top row elements of a frieze satisfy the relation:
\[
a_{1,k}=a_{k}a_{1,k-1}-a_{1,k-2}.
\]
\end{theorem}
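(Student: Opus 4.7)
The plan is to derive this relation as an immediate consequence of results established in the previous chapter. Specifically, by the theorem in Section~\ref{ssec:integer}, every diagonal element $a_{1,k}$ equals the continuant $V_k(a_1,\dots,a_k)$ of the first $k$ entries of the second row standing above it. Thus the recurrence claimed in the theorem is simply the recurrence for continuants, translated into the notation for frieze entries.

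Concretely, I would first write $a_{1,k}=V_k(a_1,\dots,a_k)$, $a_{1,k-1}=V_{k-1}(a_1,\dots,a_{k-1})$, and $a_{1,k-2}=V_{k-2}(a_1,\dots,a_{k-2})$, appealing to the identification of frieze entries with continuants. Then I would invoke Proposition~\ref{prop:contrecurrence}, which gives
\[
V_k(a_1,\dots,a_k)=a_k\,V_{k-1}(a_1,\dots,a_{k-1})-V_{k-2}(a_1,\dots,a_{k-2}),
\]
and substitute back the $a_{1,j}$ notation on both sides to obtain the desired identity.

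Since this is a direct two-line reduction to the continuant recurrence, there is essentially no obstacle: the substantive content (integrality, Morse-code interpretation, and the recurrence itself) has already been handled in Chapter~2. The only thing worth double-checking is the edge cases $k=1,2$, where one must use the conventions $V_0=1$ (corresponding to the row of $1$'s at the top) and $V_{-1}=0$ (corresponding to the invisible row of $0$'s above it); with these, the recurrence reproduces $a_{1,1}=a_1$ and $a_{1,2}=a_2 a_1 - 1$, which match the frieze directly.
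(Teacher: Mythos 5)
Your proposal is correct and is exactly the paper's own argument: the paper deduces this theorem from the identification $a_{1k}=V_k(a_1,\dots,a_k)$ together with the continuant recurrence of Proposition~\ref{prop:contrecurrence}, just as you do. Your remark on the edge cases (using $V_0=1$ and $V_{-1}=0$, matching the row of ones and the invisible row of zeroes) is a nice touch consistent with the paper's conventions.
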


\begin{remark}
Sometimes it is said that the frieze diagonal $a_1,a_{12},a_{13},\dots$ is a solution to the \emph{Sturm--Liouville difference equation}:
\[
V_k=a_kV_{k-1}-V_{k-2},
\]
where $a_1,a_2,\dots$ are coefficients and $V_1,V_2,\dots$ are unknowns.
\end{remark}

Furthermore, the previous theorem can be reformulated in the following way, expressing the second row elements through the diagonal elements.

\begin{theorem}\label{thm:2fromdiag}
For $k\geq 2$, the following equalities hold:
\[
a_k=\frac{a_{1,k-2}+a_{1k}}{a_{1,k-1}}.
\]
\end{theorem}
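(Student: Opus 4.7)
The plan is to obtain Theorem \ref{thm:2fromdiag} as a direct algebraic consequence of Theorem \ref{thm:diagfrom2}, which was just stated. Since Theorem \ref{thm:diagfrom2} gives the three-term recurrence
\[
a_{1,k} = a_k a_{1,k-1} - a_{1,k-2},
\]
I would simply isolate $a_k$ on one side. Adding $a_{1,k-2}$ to both sides yields $a_{1,k} + a_{1,k-2} = a_k a_{1,k-1}$, and dividing by $a_{1,k-1}$ produces the claimed identity.

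The only thing to justify is that $a_{1,k-1}$ is nonzero, so that the division is legitimate. This is fine: $a_{1,k-1}$ is an entry of the frieze, which by hypothesis consists of positive numbers (in the integer case, positive integers), so in particular it is nonzero. Alternatively, one could invoke the interpretation of $a_{1,k-1}$ as a continuant $V_{k-1}(a_1,\dots,a_{k-1})$ with positive arguments, which is manifestly positive.

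I do not foresee any real obstacle: once Theorem \ref{thm:diagfrom2} is in hand, Theorem \ref{thm:2fromdiag} is essentially a tautological rearrangement. The only conceptual subtlety, if one wants to be pedantic, is noting that the formula makes sense only for $k \geq 2$, which is exactly the range stated; for $k=2$ one interprets $a_{1,0}=1$ (the entry in the top row of $1$'s), consistently with the convention $V_0=1$ used in Section \ref{ssec:rec}.
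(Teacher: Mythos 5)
Your proposal is correct and matches the paper exactly: the paper presents Theorem~\ref{thm:2fromdiag} as an immediate reformulation of the recurrence in Theorem~\ref{thm:diagfrom2}, obtained by isolating $a_k$, with no further argument. Your added remarks on the positivity of $a_{1,k-1}$ and the convention $a_{1,0}=1$ for $k=2$ are sound and only make the justification more complete than the paper's.
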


Note that here we never used the fact that our frieze is finite, i.e., that it ends with a row of $1$'s. So all this remains valid for infinite ``friezes'' (see Sec.~\ref{ssec:first}) with an arbitrary second row.

\subsection{Symmetries of finite friezes}

Now suppose our frieze is \emph{finite}: there exists an $n$ such that for all $i$, the equalities $a_{i,n+i-2}=1$ and $a_{i,n+i-1}=0$ hold. In other words, the $(n-1)$-th row consists entirely of ones, and the $n$-th row consists entirely of zeroes. Let us write Theorem~\ref{thm:diagfrom2} for the element $a_{1n}=0$:
\[
0=a_{1n}=a_na_{1,n-1}-a_{1,n-2}=a_n-a_{1,n-2}
\]
(recall that $a_{1,n-1}=1$ since the $(n-1)$-th row consists entirely of ones), from which we obtain:
\[
a_{1,n-2}=a_n.
\]

But the same relation holds for any diagonal, not just the first one. This gives us a set of equalities:
\[
a_{k,n-3+k}=a_{n+k-1}.
\]
Thus, the second row of the frieze coincides with the penultimate row, shifted right by $n/2$ positions. In this case, the frieze has the form:

\begin{equation}\label{eq:frieze}
\begin{array}{ccccccccccccccccccccc}
\dots &1 && 1&& 1&& 1&& 1 &&1&&\dots\\
&\dots & a_1 && a_2  && a_3 &&a_4&& \dots&&  a_{n-1}&&a_n&&\dots\\
&&\dots & a_{12} &&\dots &&&&&& \dots \\
&&&\dots & a_{13} &&\dots&&&& \dots \\
&&&&\dots & \dots &&\dots &&\dots\\
&&&&&\dots & a_{n} && a_{n+1}&&a_{n+2}&&\dots \\
&&&&\dots & 1 && 1&& 1 && \dots 
\end{array}
\end{equation}

From this we obtain the following result.

\begin{theorem}
A frieze with $n-1$ rows possesses a glide symmetry: its $k$-th row shifted by $n/2$ positions coincides with the $(n-k)$-th row.
\end{theorem}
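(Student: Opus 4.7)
The plan is to combine the identity $a_{k,n+k-3}=a_{n+k-1}$ derived just above with the fact that a frieze is uniquely determined by its second row.

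First I will introduce the candidate glide transformation $T$: reflect the table vertically (sending row $r$ to row $n-r$) and then translate horizontally by $n/2$ positions. Since the unimodularity condition $ad-bc=1$ on a unit diamond~(\ref{eq:diamond}) is symmetric under swapping the top entry $b$ with the bottom entry $c$, vertical reflection sends friezes to friezes; horizontal translation trivially does as well. Moreover, $T$ interchanges the top and bottom rows of $1$'s, so for any finite frieze $F$ of order $n$ the image $T(F)$ is again a finite frieze of the same order.

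Next I will compare the second rows of $T(F)$ and $F$. By the definition of $T$, the second row of $T(F)$ is the penultimate row of $F$ shifted by $n/2$ positions. The established identity $a_{k,n+k-3}=a_{n+k-1}$ says precisely that this shifted penultimate row coincides entry-by-entry with the second row of $F$. Hence $F$ and $T(F)$ share the same second row. Since a frieze is uniquely reconstructed from its second row by the recursive rule of Section~\ref{ssec:first}, this forces $T(F)=F$, which is exactly the asserted glide symmetry.

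The one point that needs care is checking in the second step that the geometric horizontal displacement between the entry $a_{k,n+k-3}$ (sitting in the penultimate row) and the entry $a_{n+k-1}$ (sitting in the second row) really does amount to a shift by $n/2$ columns, in view of the half-column stagger between rows of opposite parity. This is a short bookkeeping verification with column coordinates and does not affect the overall structure of the argument.
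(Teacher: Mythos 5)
Your proposal is correct and follows essentially the same route as the paper: the paper likewise derives $a_{k,n+k-3}=a_{n+k-1}$ from the recurrence and then observes that the frieze can be rebuilt from the penultimate row upward (unimodularity being symmetric under swapping top and bottom of a diamond), forcing the glide-reflected table to coincide with the original. Your version merely spells out this uniqueness argument more explicitly via the transformation $T$, which is a reasonable elaboration rather than a different proof.
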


\begin{proof}
From the unimodularity condition, it follows that the frieze can be constructed not only from the second row downward but also from the penultimate row upward, with the same result.
\end{proof}

\begin{corollary}\label{cor:periodic}
A frieze with $n-1$ rows is periodic with period $n$.
\end{corollary}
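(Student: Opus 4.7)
My plan is to deduce the corollary by applying the glide symmetry of the preceding theorem twice.

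Concretely, the glide symmetry statement says that shifting the $k$-th row by $n/2$ positions produces the $(n-k)$-th row. Composing this operation with itself amounts to a pure translation by $n$ positions: starting from the $k$-th row, one shift of length $n/2$ gives the $(n-k)$-th row, and a second shift of length $n/2$ turns the $(n-k)$-th row into the $\bigl(n-(n-k)\bigr)$-th row, which is the $k$-th row again. Thus the composition is a translation by $n$ that fixes every row as a set of entries, so each row coincides with itself under a horizontal shift by $n$. This is exactly the periodicity assertion.

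A small point to watch is that $n/2$ need not be an integer: the frieze is drawn in a half-step checkerboard pattern, so a ``shift by $n/2$ positions'' is naturally a shift by $n/2$ units along a row that is offset by a half-step from its neighbours. But composing two such half-integer shifts always yields an integer shift of $n$, and the parity of the rows involved is consistent because rows $k$ and $n-k$ lie on the same sublattice of half-integer positions. So the argument goes through without having to split into cases according to the parity of $n$.

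I expect no real obstacle here: the corollary is essentially a formal consequence of the glide symmetry, since any glide reflection squares to a pure translation by twice its translation vector. The only care needed is to make the ``shift by $n/2$'' precise in the checkerboard layout used throughout the chapter, which can be done in one sentence by referring back to display~(\ref{eq:frieze}).
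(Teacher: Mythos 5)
Your argument is exactly the paper's: the corollary is deduced by composing the glide symmetry with itself, so that two shifts by $n/2$ give a translation by $n$ carrying row $k$ to row $n-k$ and back to row $k$. The proposal is correct and matches the paper's proof, with your remark on half-integer shifts being a harmless elaboration.
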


\begin{proof}
Shifting the frieze by $n$ is the same as applying two glide symmetries with shift $n/2$.
\end{proof}

\begin{definition} A frieze consisting of $n-1$ rows is said to have \emph{order} $n$.
\end{definition}

According to Corollary~\ref{cor:periodic}, any frieze of order $n$ is periodic with the period length equal to $n$; however, this length is not necessarily minimal (see~Remark~\ref{rem:period} on p.~\pageref{rem:period}).

\begin{exercise}
What identities on continuants are equivalent to the symmetry of the frieze?
\end{exercise}

\subsection{Infinite extension of finite friezes} By definition, a frieze of order $n$ terminates with a row of ones and a row of zeros. That is, for the elements of its (say, first) diagonal, the following equalities hold:
\[
a_{1,n-1}=1, \qquad a_{1,n}=0.
\]
But what happens if we extend this sequence further? Using the equality from Theorem~\ref{thm:diagfrom2}, we can define $a_{1,i}$ for $i>n$. We see that
\[
a_{1,n+1}=a_{n}a_{1,n}-a_{1,n-1}=a_n\cdot 0-1=-1.
\]
Extending this diagonal further, we get
\[
a_{1,n+2}=a_{n+1}a_{1,n+1}-a_{1,n}=-a_{n+1}=-a_1
\]
(here we used the periodicity of the second row with period $n$). We find that the $(n+1)$-th and $(n+2)$-th diagonal elements differ from the first and second only by sign! It is easy to see that this pattern continues: $a_{1,n+k}=-a_{1,k}$ for $k\leq n$. Thus, each frieze diagonal can be extended downward, and the next $n$ elements will differ from the first $n$ by multiplication by $-1$. Of course, this holds for any other diagonal as well. Similarly, we can say that for arbitrary $k$ we have
\[
a_{i,rn+k}=(-1)^r a_{ik}.
\]

Thus, we can extend our frieze below the row of zeros. Clearly, this extension will satisfy the unimodularity rule. The possibility to extend friezes downward will be useful in the next section.

\subsection{Transition to adjacent diagonals} In this section, we will examine how elements of three adjacent frieze diagonals are related. It turns out that knowing two of them allows us to recover the third  one through a simple matrix transformation.

As before, we consider the frieze:
\begin{equation*}
\begin{array}{ccccccccccccccccccccc}
\dots &1 && 1&& 1&& 1&&1&&\dots\\
&\dots & a_1 && a_2  && a_3 &&a_4&&\dots\\
&&\dots & a_{12} &&a_{23}&& a_{34}&&\dots  \\
&&&\dots & a_{13} && a_{24}&& a_{35}&&\dots\\
&&&&\dots &a_{14}&&\ddots &&\ddots\\
&&&&&& \ddots &&\ddots &&\ddots\\
&&&&&&\dots & a_{1,n-2} && a_{2,n-1}&&a_{3,n}&&\dots \\
&&&&&&&\dots & 1 && 1&& 1 && \dots 
\end{array}
\end{equation*}
To simplify the notation, we formally set $a_{kk}:=a_k$.

\begin{theorem}\label{thm:matrixshift} For any $k$, the following equality holds:
\[
\begin{pmatrix}
    a_1 &-1 \\1 &0
\end{pmatrix}
\begin{pmatrix}
    a_{2k} \\a_{3k}
\end{pmatrix}=
\begin{pmatrix}
    a_{1k} \\a_{2k}
\end{pmatrix}.
\]

\end{theorem}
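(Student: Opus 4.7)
The plan is to reduce the claimed matrix equation to a single scalar identity about continuants, and then to prove that identity by the standard Morse-code argument used for Proposition~\ref{prop:contrecurrence}.

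First I would recall that, by Theorem~\ref{thm:diagfrom2} and the general result of Section~\ref{ssec:integer}, each frieze entry is a continuant of the row above it: namely $a_{ij}=V_{j-i+1}(a_i,a_{i+1},\dots,a_j)$ for $i\le j$, with the convention $a_{ii}=a_i$. (This is not restricted to the first diagonal, since translating the labeling of the second row is harmless.) The second component of the claimed matrix identity reads $a_{2k}=a_{2k}$, which is trivially true. Thus the theorem reduces to the single scalar identity
\[
a_{1k}=a_1\,a_{2k}-a_{3k},
\]
which, rewritten in continuants, is
\[
V_k(a_1,\dots,a_k)=a_1\,V_{k-1}(a_2,\dots,a_k)-V_{k-2}(a_3,\dots,a_k).
\]

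This is exactly part (3) of Exercise~\ref{ex:continuant}, so in principle one could just quote it; but for a self-contained proof I would give the Morse-code argument, which is the mirror image of the one used in Proposition~\ref{prop:contrecurrence}. A Morse code on the vertices $\{1,2,\dots,k\}$ either has the first vertex unpaired (a dot), in which case removing that vertex produces a Morse code on $\{2,\dots,k\}$ and its weight is $a_1$ times the weight of the smaller code; or the first vertex is joined to the second by a dash, in which case removing both produces a Morse code on $\{3,\dots,k\}$ and its weight is $-1$ times the weight of the smaller code. Summing over the two cases gives precisely the required identity. The low-order cases $k=1,2$ (with the convention $V_0=1$, $V_{-1}=0$) can be checked separately if desired.

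There is no genuine obstacle here: once the frieze entries are identified as continuants, the statement is a clean reformulation of the leftmost continuant recurrence, and the only thing to notice is that the second row of the matrix equation is automatic. The only mildly subtle point is making sure that the notation $a_{ij}=V_{j-i+1}(a_i,\dots,a_j)$ really does extend to all diagonals (not just the first), but this is immediate from the translation-invariance of the construction of the frieze from its second row.
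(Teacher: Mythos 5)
Your proposal is correct, but it takes a genuinely different route from the paper. Both arguments begin by observing that the second component of the matrix identity is vacuous, so everything reduces to the scalar relation $a_{1k}=a_1a_{2k}-a_{3k}$. The paper proves this by induction on $k$: it writes the Sturm--Liouville recurrence $a_{i,k+1}=a_{k+1}a_{i,k}-a_{i,k-1}$ for the three adjacent diagonals $i=1,2,3$ (Theorem~\ref{thm:diagfrom2}), notes that the coefficient $a_{k+1}$ is the same in all three, and combines this with the inductive hypothesis at steps $k$ and $k-1$. You instead identify $a_{ij}=V_{j-i+1}(a_i,\dots,a_j)$ via the theorem of Section~\ref{ssec:integer} and reduce the claim to the \emph{leftmost} continuant recurrence $V_k(a_1,\dots,a_k)=a_1V_{k-1}(a_2,\dots,a_k)-V_{k-2}(a_3,\dots,a_k)$, which you then prove by the Morse-code decomposition according to the state of the \emph{first} vertex --- the mirror image of the proof of Proposition~\ref{prop:contrecurrence}, and the combinatorial counterpart of the exercise the paper places right after this theorem (expanding the tridiagonal determinant along the first row). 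Your version is arguably more conceptual: the matrix identity is exposed as nothing but the left-end recurrence for continuants. What the paper's version buys is self-containedness and robustness: it uses only the diagonal recurrence, which is why the author can immediately remark that the identity persists for the frieze extended below the row of zeros --- the fact actually needed for Corollary~\ref{cor:minusid}. Your argument also covers that extension, since the extended diagonal satisfies the same recurrence with the same initial data and hence is still given by continuants, but this deserves an explicit sentence if your proof is to feed into that corollary. Your handling of the degenerate cases $k=1,2$ via $V_0=1$, $V_{-1}=0$ is fine and matches the paper's conventions.
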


Informally, applying such a matrix shifts the frieze diagonal running from southwest to northeast one position downward.

\begin{proof} We prove this equality by induction on $k$. For the base case, take $k=2$:
\[
\begin{pmatrix}
    a_1 &-1 \\1 &0
\end{pmatrix}
\begin{pmatrix}
    a_2\\ 1
\end{pmatrix}=
\begin{pmatrix}
    a_1a_2-1\\a_2
\end{pmatrix}=\begin{pmatrix}
    a_{12}\\a_2
\end{pmatrix}.
\]

Note that we could also take $k=1$ as the base case, assuming the frieze is extended upward with zeros; this would give the correct equality $\begin{pmatrix}
    a_1 &-1 \\1 &0
\end{pmatrix}
\begin{pmatrix}
    1\\ 0
\end{pmatrix}=
\begin{pmatrix}
    a_1\\1
\end{pmatrix}$.

To prove the inductive step, write the recurrence relations for the diagonal elements:
\begin{eqnarray*}
    a_{1,k+1}&=&a_{k+1}a_{1,k}-a_{1,k-1},\\
    a_{2,k+1}&=&a_{k+1}a_{2,k}-a_{2,k-1},\\
    a_{3,k+1}&=&a_{k+1}a_{3,k}-a_{3,k-1}.\\
\end{eqnarray*}
Note that all these equalities contain the same coefficient $a_{k+1}$.

Furthermore, the induction hypothesis states that
\begin{eqnarray*}
    a_{1,k}&=&a_1 a_{2,k}-a_{3,k},\\
    a_{1,k-1}&=&a_1 a_{2,k-1}-a_{3,k-1}.
\end{eqnarray*}
From this it follows that
\[
a_{1,k+1}=a_1 a_{2,k+1}-a_{3,k+1},
\]
as required.
\end{proof}

\begin{exercise}
Derive these equalities from Proposition~\ref{prop:contdet} by expanding the continuant determinant along the \emph{first} row.
\end{exercise}

Here we only used recurrence relations for diagonal elements, so these matrix equalities will also hold for elements of the frieze extended downward according to the rule from the previous section. From this we obtain a corollary that will be useful later when studying the connection between friezes and continued fractions. Let $I=\begin{pmatrix}
    1&0\\0&1
\end{pmatrix}$ be the $2\times 2$ identity matrix.

\begin{corollary}\label{cor:minusid}
    Let $a_1,\dots,a_n$ be the second row of a frieze of order $n$. Then the following matrix equality holds:
    \[
    \begin{pmatrix}
        a_1 & -1\\ 1&0
    \end{pmatrix}
        \begin{pmatrix}
        a_2 & -1\\ 1&0
    \end{pmatrix}\dots    \begin{pmatrix}
        a_n & -1\\ 1&0
    \end{pmatrix}=-I.
    \]
\end{corollary}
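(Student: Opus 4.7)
The plan is to iterate Theorem~\ref{thm:matrixshift}. By the horizontal periodicity of friezes (Corollary~\ref{cor:periodic}), the theorem holds starting from any column, since shifting the frieze horizontally by $j-1$ positions yields another frieze with the same structure. Thus for every index $j$ and every $k$,
\[
\begin{pmatrix} a_j & -1 \\ 1 & 0 \end{pmatrix}\begin{pmatrix} a_{j+1,k} \\ a_{j+2,k} \end{pmatrix} = \begin{pmatrix} a_{j,k} \\ a_{j+1,k} \end{pmatrix}.
\]
Composing these identities for $j=n,n-1,\dots,1$ telescopes to
\[
M_1 M_2 \cdots M_n \begin{pmatrix} a_{n+1,k} \\ a_{n+2,k} \end{pmatrix} = \begin{pmatrix} a_{1,k} \\ a_{2,k} \end{pmatrix}, \qquad (\ast)
\]
valid for every $k$, with the usual conventions $a_{i,i-1}=1$ and $a_{i,i-2}=0$ for entries lying on or above the top row of ones.

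Next, I would use the symmetries of an order-$n$ frieze to turn $(\ast)$ into the desired eigenvalue equation. Horizontal periodicity gives $a_{n+i,k}=a_{i,k-n}$, and the downward extension rule $a_{i,\ell+n}=-a_{i,\ell}$ derived just above this corollary gives $a_{i,k-n}=-a_{i,k}$. Hence $a_{n+1,k}=-a_{1,k}$ and $a_{n+2,k}=-a_{2,k}$, so $(\ast)$ rewrites as
\[
M_1 \cdots M_n \begin{pmatrix} a_{1,k} \\ a_{2,k} \end{pmatrix} = -\begin{pmatrix} a_{1,k} \\ a_{2,k} \end{pmatrix}
\]
for every $k$.

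Finally, I would choose two values of $k$ for which the vector $(a_{1,k},a_{2,k})^T$ can be read off directly. Taking $k=0$, the top-row conventions (already used in the base case of Theorem~\ref{thm:matrixshift}) give $(a_{1,0},a_{2,0})^T=(1,0)^T$; taking $k=1$ they give $(a_{1,1},a_{2,1})^T=(a_1,1)^T$. These two vectors are linearly independent, so $M_1\cdots M_n$ acts as multiplication by $-1$ on a basis and therefore equals $-I$.

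The main obstacle is purely bookkeeping: carefully tracking indices outside the main body of the frieze, namely the $\pm$ signs from the downward extension and the boundary conventions $a_{i,i-1}=1$, $a_{i,i-2}=0$ near the top row of ones. The algebraic content is simply the telescoping iteration of Theorem~\ref{thm:matrixshift} combined with the already-established periodicity and sign-reversal symmetries of the extended frieze.
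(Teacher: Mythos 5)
Your proof is correct and follows essentially the same route as the paper: iterate Theorem~\ref{thm:matrixshift} along a diagonal, use periodicity together with the sign-reversing downward extension to conclude that the product negates every diagonal vector, and hence equals $-I$. You simply make explicit the bookkeeping (the telescoping, the identity $a_{n+i,k}=-a_{i,k}$, and the evaluation on the two independent vectors $(1,0)^T$ and $(a_1,1)^T$) that the paper's terser argument leaves implicit.
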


\begin{proof}
This follows from Theorem~\ref{thm:matrixshift} and the periodicity of the frieze. Indeed, shifting the frieze diagonal by $n$ positions yields the same diagonal but with a minus sign. Hence the composition of matrices on the left side of the equality acts as multiplication by minus identity.
\end{proof}


\section{Integer friezes and triangulations}

\subsection{The quiddity of an integer frieze}

In this chapter, we will describe all friezes consisting only of integer elements.

We will call the second row $(a_1,\dots,a_n)$ of an order $n$ frieze its \emph{quiddity}. Obviously, it cannot contain two consecutive 1's: this would contradict unimodularity. However, it turns out that at least one 1 must necessarily appear in this row.

\begin{prop} The second row of an integer frieze must contain at least one entry equal to~1.
\end{prop}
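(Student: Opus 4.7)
The plan is to argue by contradiction. Suppose every entry $a_i$ of the quiddity satisfies $a_i\geq 2$. I will follow the first diagonal $a_{1,0},a_{1,1},a_{1,2},\dots$ of the frieze (where it is convenient to set $a_{1,0}=1$, the entry of the top row sitting above $a_1$, so that the recurrence of Theorem~\ref{thm:diagfrom2} still gives $a_{1,1}=a_1\cdot 1-0=a_1$). The goal is to show that this diagonal is strictly increasing, which will contradict the fact that the diagonal eventually hits the penultimate row of 1's, i.e.\ $a_{1,n-2}=1$.

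The key step is a short induction on $k$ establishing the strict inequality $a_{1,k}>a_{1,k-1}$ for all $k\geq 1$. The base case $k=1$ is $a_{1,1}=a_1\geq 2>1=a_{1,0}$. For the inductive step, assume $a_{1,k-1}>a_{1,k-2}$ (both positive). Using $a_k\geq 2$ and Theorem~\ref{thm:diagfrom2},
\[
a_{1,k}=a_ka_{1,k-1}-a_{1,k-2}\geq 2a_{1,k-1}-a_{1,k-2}=a_{1,k-1}+(a_{1,k-1}-a_{1,k-2})>a_{1,k-1}.
\]
In particular, the sequence $a_{1,0},a_{1,1},a_{1,2},\dots$ is a strictly increasing sequence of positive integers starting from $1$, hence $a_{1,k}>1$ for every $k\geq 1$.

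Finally, since the frieze has order $n$, its $(n-1)$-st row consists of $1$'s, and in particular $a_{1,n-2}=1$. For a nontrivial frieze we have $n\geq 4$, so $n-2\geq 1$, and the strict monotonicity just proved gives $a_{1,n-2}>1$, a contradiction. (The case $n\leq 3$ is vacuous: the quiddity then consists entirely of $1$'s.) There is no real obstacle: the only care needed is to set the initial value $a_{1,0}=1$ correctly so that the Sturm--Liouville recurrence applies from $k=1$ onward, and to note that the strict gap $a_{1,k-1}-a_{1,k-2}\geq 1$ is what propagates the strict inequality in the induction.
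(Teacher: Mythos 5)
Your proof is correct and follows essentially the same route as the paper's primary argument: assuming all $a_k\geq 2$, the recurrence $a_{1,k}=a_ka_{1,k-1}-a_{1,k-2}$ forces the differences along the first diagonal to be nondecreasing and at least $1$, so the diagonal is strictly increasing and can never return to the value $1$ required by the penultimate row. (The paper also sketches a second, geometric proof via the action of the matrices $\begin{pmatrix}a_k&-1\\1&0\end{pmatrix}$ on the cone $x\geq y\geq 0$, but your monotonicity argument matches its first proof.)
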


\begin{proof}
Suppose this is not the case, and there exists a frieze with all elements in the second row differ from 1: $a_k \geq 2$. Denote the elements of its first diagonal by $v_k = a_{1k}$. Then for them we have the inequality
\[
v_k = a_k v_{k-1} - v_{k-2} \geq 2v_{k-1} - v_{k-2},
\]
which implies that
\[
v_k - v_{k-1} \geq v_{k-1} - v_{k-2}.
\]
But this holds for all $k$. Therefore,
\[
v_k - v_{k-1} \geq v_{k-1} - v_{k-2} \geq \dots \geq v_2 - v_1 = a_2 - 1 \geq 1,
\]
meaning that the sequence $v_k$ is strictly increasing. This contradicts the assumption that at some point, due to the finite number of rows in the frieze, it must take the values 1 and 0.

This statement can also be proved geometrically. Suppose we found a frieze with all $a_k \geq 2$. Consider the cone $x \geq y \geq 0$ generated by vectors $\begin{pmatrix}1\\0\end{pmatrix}$ and $\begin{pmatrix}1\\1\end{pmatrix}$. A matrix of the form 
$\begin{pmatrix}a_k & -1\\1 & 0\end{pmatrix}$ transforms these vectors into $\begin{pmatrix}a_k\\1\end{pmatrix}$ and $\begin{pmatrix}a_k-1\\1\end{pmatrix}$ respectively. For $a_k \geq 2$, both these vectors lie in the interior of the cone $x \geq y \geq 0$. Therefore, the composition of such matrices $\begin{pmatrix}a_1 & -1\\1 & 0\end{pmatrix}\dots\begin{pmatrix}a_n & -1\\1 & 0\end{pmatrix}$ also maps this cone into its proper subset. But this contradicts Corollary~\ref{cor:minusid}, which states that this composition equals $-I$.
\end{proof}

The following lemma allows us to construct a new frieze of order $n+1$ from a given integer  frieze of order $n$.

\begin{lemma}[The van Gogh lemma]\label{lem:vangogh}
Let $(a_1,\dots,a_n)$ be the quiddity of an integer order $n$ frieze, and let $1 \leq k \leq n$. Then:

\begin{enumerate}
    \item The tuple $(b_1,\dots,b_{n+1}) = (a_1,\dots,a_{k-1}+1,1,a_k+1,a_{k+1},\dots,a_n)$ is the quiddity of an integer order $n+1$ frieze;
    \item If $1,v_1,v_2,\dots,v_{n-2}$ is the first diagonal of the original frieze starting with element \mbox{$v_1 = a_1$}, then the corresponding diagonal of the new frieze has the form
\[
1,v_1,v_2,\dots,v_{k-1},v_{k-1}+v_k,v_k,\dots,v_{n-2}.
\]
\end{enumerate}
\end{lemma}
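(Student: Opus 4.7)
The plan is to reduce both parts to the matrix reformulation of friezes given by Corollary~\ref{cor:minusid} and the three-term diagonal recurrence of Theorem~\ref{thm:diagfrom2}. The pivotal tool is the $2\times 2$ ``surgery'' identity
\[
M(a+1)\,M(1)\,M(b+1) = M(a)\,M(b), \qquad M(x):=\begin{pmatrix} x & -1 \\ 1 & 0 \end{pmatrix},
\]
which one verifies by multiplying out the two sides; this is the only computational step that has to be done by hand before the rest of the argument goes through formally.

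For part (1), I would first treat the case $2 \leq k \leq n-1$, where the inserted $1$ sits strictly inside the linear tuple; the boundary cases $k=1$ and $k=n$ reduce to the interior case by cyclically rotating the quiddity. The matrix product associated with the new tuple factors as
\[
M(a_1)\cdots M(a_{k-2})\cdot\bigl[M(a_{k-1}+1)\,M(1)\,M(a_k+1)\bigr]\cdot M(a_{k+1})\cdots M(a_n),
\]
and the surgery identity collapses the bracketed block to $M(a_{k-1})\,M(a_k)$. The product then becomes $M(a_1)\cdots M(a_n) = -I$ by Corollary~\ref{cor:minusid} applied to the original frieze, so the new tuple is the quiddity of a frieze of order $n+1$ by the same corollary. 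The remaining combinatorial conditions — positivity of all $b_i$ and no two consecutive $1$'s — follow immediately, since the inserted $1$ is flanked by $a_{k-1}+1\geq 2$ and $a_k+1\geq 2$, and every other adjacent pair is inherited from the original quiddity.

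For part (2), I would run the three-term recurrence $w_i = b_i\,w_{i-1} - w_{i-2}$ with $w_{-1}=0$, $w_0=1$, computing the new diagonal entry by entry. A short induction shows $w_i = v_i$ for indices before the modification, since there $b_i = a_i$. The decisive calculation is at the three positions where $b$ differs from $a$: using $b_{k-1}=a_{k-1}+1$, $b_k=1$, $b_{k+1}=a_k+1$ in the recurrence and simplifying with the identity $v_{k-1}=a_{k-1}v_{k-2}-v_{k-3}$ for the original diagonal, the new entries come out to exactly the fused shape in which the inserted element equals the sum of the two neighbouring entries. Beyond the insertion, the relation $b_{i+1}=a_i$ lets a second induction propagate the appropriate shift $w_{i+1}=v_i$ all the way to the end of the diagonal.

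I expect the surgery identity to be the only real conceptual step — once it is in hand, part (1) reduces to a single application of Corollary~\ref{cor:minusid}, and part (2) is routine bookkeeping with the three-term recurrence, the only delicate point being the alignment of indices at the three positions $k-1,k,k+1$ where the new diagonal acquires its extra entry.
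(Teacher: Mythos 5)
Your treatment of part (2) is correct and is essentially the paper's own argument: run the three-term recurrence of Theorem~\ref{thm:diagfrom2} along the diagonal, observe that the entries are unchanged up to position $k-2$, do the hand computation at the three modified positions (which produces the inserted sum and then realigns with the old diagonal), and propagate the shift $w_{j}=v_{j-1}$ afterwards. Your surgery identity $M(a+1)M(1)M(b+1)=M(a)M(b)$ is also true and is a nice observation.

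The gap is in part (1). Corollary~\ref{cor:minusid} is a one-directional statement: if $(a_1,\dots,a_n)$ is the quiddity of a frieze, then $M(a_1)\cdots M(a_n)=-I$. You invoke its converse --- ``the product is $-I$, so the new tuple is the quiddity of a frieze'' --- and that converse is not proved in the paper and is false in general. The condition $M(b_1)\cdots M(b_{n+1})=-I$ only forces the rows at depth $n$ and $n+1$ of the generated table to be all ones and all zeros; it says nothing about the \emph{positivity} of the intermediate entries, which is part of what ``quiddity of an integer frieze'' means. A concrete counterexample to the converse: $(1,1,1,1,1,2,1,1,1,1,1,2)$ satisfies $M(\cdots)=-I$ (it is the quiddity of the dissection of a $12$-gon into two hexagons, per Ovsienko's theorem quoted at the end of the paper), yet its third row contains $1\cdot 1-1=0$, so it is not a frieze. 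Your added remarks (positivity of the $b_i$, no two consecutive $1$'s) do not close this gap: those are conditions on the second row only, while positivity must hold for every continuant $V_j(b_i,\dots,b_{i+j-1})$ appearing in the table. Worse, any correct strengthening of the converse would be essentially the hard direction of the Conway--Coxeter theorem, which the paper proves \emph{using} the van Gogh lemma, so arguing this way risks circularity.

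The repair is already contained in your part (2), and it is exactly how the paper proceeds: the computation there shows that \emph{every} diagonal of the new table is obtained from the corresponding diagonal of the old frieze by inserting the (positive) sum of two adjacent entries and shifting the tail by one. Hence all entries of the new table are positive integers, and each diagonal reaches the values $1$ and $0$ exactly one step later than before, so the table closes up as a frieze of order $n+1$. In other words, part (1) should be deduced from part (2), not proved independently via the matrix identity.
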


\begin{proof}
We begin with part (2). Let us write down the first diagonal of the new frieze: denote it by $1,w_1,\dots,w_{n-1}$.

Clearly, for $i \leq k-2$, the diagonals of these two friezes coincide: $w_i = v_i$. At the $(k-1)$-th position of the new frieze we will have
\[
w_{k-1} = (a_{k-1}+1)v_{k-2} - v_{k-3} = v_{k-1} + v_{k-2}.
\]
Next,
\[
w_k = w_{k-1} - w_{k-2} = v_{k-1} + v_{k-2} - v_{k-2} = v_{k-1}.
\]
The next diagonal element will be
\[
w_{k+1} = (a_k+1)w_k - w_{k-1} = (a_k+1)v_{k-1} - v_{k-1} - v_{k-2} = a_k v_{k-1} - v_{k-2} = v_k.
\]
For all $j > k+1$, the equality $w_j = v_{j-1}$ holds. In particular, the $(n-2)$-th and $(n-1)$-th terms of this sequence are equal to 1 and 0 respectively, meaning the frieze terminates, and the diagonal contains $n$ positive elements. The same argument applies to any other diagonal. Thus, such an insertion yields an integer frieze of order one greater. Part (1) is also proved.
\end{proof}

\begin{remark}\label{rem:vangogh}
This construction is obviously reversible: if a sequence $(\dots,b_{k-1},1,b_{k+1},\dots)$ is the quiddity of an integer frieze, then $(\dots,b_{k-1}-1,b_{k+1}-1,\dots)$ will be the quiddity of a frieze of order one less (note that due to the absence of consecutive 1's, both numbers $b_{k-1}-1$ and $b_{k+1}-1$ are positive).
\end{remark}

\subsection{Friezes and triangulations}

Consider a convex $n$-gon with numbered vertices. Its \emph{triangulation} is its partition into triangles using diagonals that do not intersect except at vertices.

\begin{remark} The number of triangulations of an $n$-gon is called the \emph{$(n-2)$-th Catalan number} (this is one of their numerous equivalent definitions). For them, the formula $C(n) = \frac{1}{n-1}\binom{2n-4}{n-2}$ holds (see, for instance,~\cite{Stanley15}).
\end{remark}

Consider an arbitrary triangulation of an $n$-gon. We can associate to it a tuple of numbers $(c_1,\dots,c_n)$, where $c_i$ is the number of triangles incident to the $i$-th vertex. We will call such a tuple the \emph{quiddity of the triangulation}. The triangulation is uniquely determined by its quiddity (why?).

\begin{theorem}[J.\,Conway, H.\,S.\,M.\,Coxeter]\label{thm:cc}
The tuple $(c_1,\dots,c_n)$ constructed from a triangulation of an $n$-gon is the quiddity of an integer frieze of order $n$; this mapping establishes a bijection between triangulations of an $n$-gon and integer friezes of order $n$.
\end{theorem}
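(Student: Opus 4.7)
The plan is induction on $n$, with the van Gogh lemma (Lemma~\ref{lem:vangogh}) doing the main work. As a base case I would treat $n=4$ separately: a square admits exactly two triangulations, with quiddities $(1,2,1,2)$ and $(2,1,2,1)$, matching the two integer order $4$ friezes identified in the four-row section.

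For the forward map I would associate to each triangulation the integer frieze whose quiddity equals its vertex-degree tuple $(c_1,\ldots,c_n)$. To show that a frieze with this quiddity actually exists, I would invoke the standard combinatorial fact that every triangulation of an $n$-gon with $n\geq 4$ has at least one \emph{ear}: a vertex $k$ belonging to exactly one triangle, so that $c_k=1$. Removing this ear yields a triangulation of the $(n-1)$-gon obtained by deleting vertex $k$, whose quiddity is $(c_1,\ldots,c_{k-1}-1,c_{k+1}-1,\ldots,c_n)$. By the induction hypothesis this smaller tuple is the quiddity of an integer frieze of order $n-1$, and part~(1) of the van Gogh lemma upgrades it to an integer frieze of order $n$ with quiddity $(c_1,\ldots,c_n)$, as required.

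For the reverse direction I would use the preceding Proposition, which guarantees that every integer frieze's quiddity contains some $a_k=1$. By Remark~\ref{rem:vangogh}, contracting at this position produces an integer frieze of order $n-1$ with quiddity $(a_1,\ldots,a_{k-1}-1,a_{k+1}-1,\ldots,a_n)$, which by induction comes from a triangulation $T'$ of an $(n-1)$-gon. Reinserting the deleted vertex $k$ and attaching the ear triangle $\{k-1,k,k+1\}$ to $T'$ yields a triangulation $T$ of the $n$-gon with the prescribed quiddity; the two procedures are visibly inverse at each inductive step.

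The main obstacle is establishing that a triangulation is uniquely determined by its quiddity (the parenthetical ``why?'' in the statement), so that the reverse construction really is well-defined rather than dependent on choices. I would prove this uniqueness by a parallel induction: any vertex with $c_k=1$ must be an ear, forcing the triangle $\{k-1,k,k+1\}$ to belong to the triangulation; after removing this triangle one obtains a smaller polygon with the contracted quiddity, to which induction applies. Independence of the choice of ear at each step follows from the fact that the reduced tuple is always the quiddity of \emph{some} triangulation (by the forward direction), so the recursion is consistent and recovers the same set of diagonals regardless of the order in which ears are peeled.
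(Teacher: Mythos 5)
Your proposal is correct and follows essentially the same route as the paper: induction on $n$, existence of an ear by the pigeonhole/counting argument, removal of the ear combined with part (1) of Lemma~\ref{lem:vangogh} for the forward direction, and the reversibility observed in Remark~\ref{rem:vangogh} together with the proposition that every integer quiddity contains a $1$ for the inverse. Your additional care about the well-definedness of the reverse construction (independence of the choice of ear) fills in a point the paper leaves implicit, but it does not change the argument's structure.
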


\begin{proof}
We will prove this theorem by induction on $n$. The base case $n=3$ is obvious: there is only one triangulation of a triangle and only one order $3$ frieze with quiddity $(1,1,1)$. (Those not convinced by this argument may start with $n=4$).

Consider a triangulation of an $n$-gon. Choose a vertex belonging to only one triangle. Such a vertex exists by the pigeonhole principle: if $n$ exterior sides belong to $n-2$ triangles of the triangulation, then there must be at least one triangle (in fact, at least two) to which two exterior sides belong. We will call such a triangle an ``ear".

Let the vertex belonging to only one triangle have number $k$, and let vertices $k-1$ and $k+1$ belong to $b_{k-1}$ and $b_{k+1}$ triangles respectively. Thus, the quiddity of this triangulation has the form $(\dots,b_{k-1},1,b_{k+1},\dots)$. Remove the triangle adjacent to the $k$-th vertex from this triangulation to obtain a triangulation of an $(n-1)$-gon with quiddity $(\dots,b_{k-1}-1,b_{k+1}-1,\dots)$. 
By the induction hypothesis, this triangulation determines a frieze with the same quiddity. But by Lemma~\ref{lem:vangogh}, then the tuple $(\dots,b_{k-1},1,b_{k+1},\dots)$ also determines an integer frieze.

The bijectivity of this correspondence follows from Remark~\ref{rem:vangogh}: from the quiddity of each  frieze of order $n$, we can remove a 1, reducing the problem to a frieze of order one less.
\end{proof}

\subsection{Reconstructing friezes from triangulations}\label{ssec:friezetriang}

Our next goal is to describe the combinatorial meaning of frieze elements obtained from a given triangulation. The Conway--Coxeter theorem states that each element of the frieze's second row is the number of triangles incident to the corresponding vertex of the triangulation. But how can we reconstruct the remaining frieze elements?

Consider an integer frieze, written with the notation of~(\ref{eq:frieze}) on p.~\pageref{eq:frieze}: its second row is $(a_1,a_2,\dots,a_n)$, and the elements of the diagonal starting at $a_1$ are denoted by $v_0=1,v_1,v_2,\dots,v_{n-2}$, i.e., $v_1=a_1$. It is also convenient to set $v_{-1}=0$.

Consider a triangulation of an $n$-gon whose vertices are numbered from $0$ to $n-1$, with quiddity $(a_1,\dots,a_n)$. We can reconstruct $(v_0,v_1,v_2,\dots)$ using the following algorithm:

\begin{itemize}
    \item write the numbers $0$ and $1$ in vertices $0$ and $1$ respectively;
    \item then place numerical labels in all other vertices of the polygon according to the following  rule: for each triangle where two vertices already have numbers $a$ and $b$, write $a+b$ in its third vertex. Repeat this procedure until all vertices are filled.
\end{itemize}

\begin{exercise}
Show that all vertices connected to vertex $0$ by a diagonal or side will have $1$ written in them.
\end{exercise}

\begin{prop}\label{prop:algtriang} The label obtained by this algorithm in the $i$-th vertex is equal to the diagonal element $v_{i-1}$ of the frieze.
\end{prop}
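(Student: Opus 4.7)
The plan is to prove the claim by induction on $n$. The base case $n=3$ is immediate: the unique triangulation has quiddity $(1,1,1)$, so the recurrence gives $v_1 = a_1 v_0 - v_{-1} = 1$, and the algorithm labels vertex $2$ as $0+1=1=v_1$.

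For the inductive step, I would remove an ear and reduce to an $(n-1)$-gon. Any triangulation has at least two ears, and vertices $0$ and $1$ cannot both be ears (the edge $(0,1)$ belongs to a unique triangle), so there always exists an ear at some vertex $k$ with $2 \leq k \leq n-1$. By Remark~\ref{rem:vangogh}, deleting the ear triangle $(k-1,k,k+1)$ and the vertex $k$ yields an $(n-1)$-gon triangulation whose quiddity is $(a_1,\ldots,a_{k-1}-1,a_{k+1}-1,\ldots,a_n)$. Renumber its vertices by $\sigma(j)=j$ for $j<k$ and $\sigma(j)=j-1$ for $j>k$, and let $v'_{-1}=0,\ v'_0=1,\ v'_1,\ldots,v'_{n-3}$ denote the first diagonal of the reduced frieze; by the induction hypothesis the small-polygon algorithm labels new vertex $i$ with $v'_{i-1}$.

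Next I would argue that the big-polygon algorithm, restricted to vertices $j \neq k$, agrees with the small-polygon algorithm. The reason is that the big triangulation consists of the small triangulation's triangles (under $\sigma$) together with the single ear triangle; the ear cannot be ``activated'' until both $k-1$ and $k+1$ have been labeled, and it contributes nothing to the labels of vertices other than $k$, so it may be deferred to the last step. Combined with the induction hypothesis this yields $L(j)=v'_{\sigma(j)-1}$ for $j \neq k$, after which the ear forces $L(k)=L(k-1)+L(k+1)=v'_{k-2}+v'_{k-1}$.

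Finally, I would invoke the explicit relations between the two diagonals computed inside the proof of Lemma~\ref{lem:vangogh}: $v_i = v'_i$ for $0 \leq i \leq k-2$;\ $v_{k-1} = v'_{k-2}+v'_{k-1}$;\ $v_k = v'_{k-1}$;\ $v_{k+1} = v'_k$;\ and $v_i = v'_{i-1}$ for $i \geq k+2$. A short case check then confirms $L(j)=v_{j-1}$ for every $j$, completing the induction. The main obstacle is bookkeeping: keeping the vertex renumbering $\sigma$, the shifted diagonal indexings of the two friezes, and the five van Gogh relations aligned, especially near the seam $j \in \{k-1,k,k+1\}$ where the labels transition between the smaller and the bigger frieze.
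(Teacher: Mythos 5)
Your proposal is correct and follows essentially the same route as the paper: induction on the number of vertices, removal of an ear (away from the two seed vertices), and Lemma~\ref{lem:vangogh}(2) to match the modified diagonal. The only cosmetic difference is that the paper identifies the ear as the last leaf of the dual tree visited by the algorithm, whereas you choose an ear in advance and argue its processing can be deferred to the final step; both arguments are equivalent.
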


\begin{proof}
To this triangulation of the $n$-gon we can associate its dual graph with $n-2$ vertices: its vertices correspond to triangles in the triangulation, with two vertices connected by an edge if the corresponding triangles share a side. Such a graph is a tree, with all vertex degrees at most three. The leaves (degree 1 vertices) of this tree correspond to the ``ears'' of the triangulation.
\begin{figure}[h!]
\begin{tikzpicture}[scale=2.5,line cap=round,line join=round]
\foreach \x in {0,...,6} {
  \coordinate (A\x) at (\x*360/7-90-180/7:1);
  \draw[thick] (\x*360/7-90-180/7:1)--(\x*360/7-90+180/7:1);
}
\draw (A5)--(A0) (A0)--(A2) (A5)--(A2) (A2)--(A4);
\coordinate (B1) at (-.78,-.15);
\coordinate (B2) at (-.1,-.2);
\coordinate (B3) at (.3,-.7);
\coordinate (B4) at (0,.5);
\coordinate (B5) at (.6,.5);
\draw[dashed] (B1)--(B2) (B2)--(B3) (B2)--(B4) (B4)--(B5);
\foreach \x in {1,...,5} {
   \draw[fill=white] (B\x) circle (.3mm);
}
\end{tikzpicture}
\caption{A triangulation and its corresponding tree}\label{fig:triang}
\end{figure}
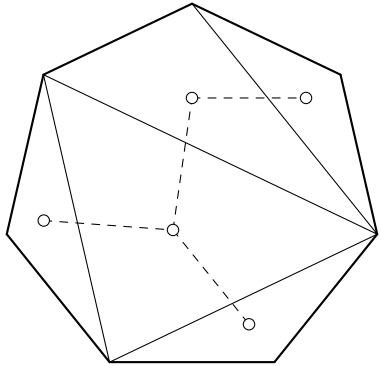

Next, writing numbers in triangle vertices according to our algorithm can be viewed as visiting the vertices of the corresponding tree: in the first step of the algorithm, we take the triangle containing vertices $0$ and $1$, and write a number in its third vertex (for the first step, this number will be $1$). Having done this, we mark the tree vertex corresponding to this triangle as visited.

At each subsequent step, we consider some vertex adjacent to already visited ones; it corresponds to a triangle where two vertices already have some numbers. Writing their sum in the triangle's third vertex, we mark the tree vertex corresponding to this triangle as visited.

We prove our proposition by induction on $n$. The base case $n=3$ is obvious: we obtain a triangle with $0$, $1$, and $1$ written in its vertices. It corresponds to the unique order $3$ frieze.

Inductive step: suppose our proposition is proved for all possible triangulations of an $n$-gon, and we want to prove it for an $(n+1)$-gon. Consider the last step of the algorithm; in its course, we must visit some pendant vertex of the triangulation tree, i.e., add an ``ear'' to the triangulation of the $n$-gon. Let this $n$-gon have quiddity $(a_1,\dots,a_n)$, and the ``ear'' at the last step is attached to side $(k-1,k)$. By the induction hypothesis, the resulting set of labels $(v_0,\dots,v_{n-1})$ in the vertices  lie on the diagonal of the frieze with quiddity $(a_1,\dots,a_n)$. Next, at the last step, a vertex with label $v_{k-1}+v_k$ is be added between vertices $k-1$ and $k$. The triangulation of the $(n+1)$-gon  then has quiddity $(\dots,a_{k-1}+1,1,a_k+1,\dots)$, which agrees with the result of Lemma~\ref{lem:vangogh} about frieze elements. The proposition is proved.
\end{proof}

\begin{exercise}
Consider a triangulation of an $n$-gon where all diagonals form a zigzag, as in Fig.~\ref{fig:octagon} (in other words, all elements of this triangulation's quiddity are at most 3). Prove that all elements of the corresponding frieze will be Fibonacci numbers.
\end{exercise}

\begin{figure}[h!]
\includegraphics{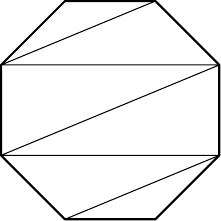}
\caption{A zigzag triangulation}\label{fig:octagon}
\end{figure}

\begin{exercise} Consider a triangulation containing a vertex where at least 4 triangles meet. Prove that the corresponding frieze contains the number 4.
\end{exercise}

\subsection{Admissible paths}
Another method for computing frieze elements, presented in~\cite{BrolineCroweIsaacs74}, is based on counting so-called admissible paths.

\begin{definition}
An \emph{admissible path} from vertex $i-1$ to vertex $j+1$ is an ordered sequence of distinct triangles $\tau_i,\dots,\tau_j$, where triangle $\tau_\ell$ is incident to vertex $\ell$.
\end{definition}

\begin{prop} The frieze element $a_{ij}$ equals the number of admissible paths from $i-1$ to $j+1$.
\end{prop}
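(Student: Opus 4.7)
The plan is to prove the formula by induction on $d = j - i$. Let $N(i, j)$ denote the number of admissible paths from $i-1$ to $j+1$, with the natural conventions $N(i, i-1) = 1$ (the unique empty path) and $N(i, i-2) = 0$; these match the frieze boundary values $a_{i, i-1} = 1$ and $a_{i, i-2} = 0$. The base case $d = 0$ is immediate: an admissible path from $i-1$ to $i+1$ consists of a single triangle $\tau_i$ incident to $i$, and there are $a_i$ such triangles, so $N(i, i) = a_i = a_{ii}$.

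For the inductive step I would show that $N(i, j)$ satisfies the same three-term recurrence as the diagonal of the frieze (Theorem~\ref{thm:diagfrom2}):
\[
N(i, j) = a_j\,N(i, j-1) - N(i, j-2).
\]
The natural strategy is a double count. Let $S$ be the set of pairs $(\sigma, T)$ where $\sigma = (\tau_i, \dots, \tau_{j-1})$ is an admissible path from $i-1$ to $j$ and $T$ is any triangle incident to $j$; clearly $|S| = a_j\,N(i, j-1)$. Split $S$ as $S_1 \sqcup S_2$ according to whether $T \notin \sigma$ or $T \in \sigma$. The map $(\sigma, T) \mapsto (\tau_i, \dots, \tau_{j-1}, T)$ is a bijection from $S_1$ onto admissible paths from $i-1$ to $j+1$, giving $|S_1| = N(i, j)$. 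It therefore remains to establish $|S_2| = N(i, j-2)$.

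The main obstacle is constructing a bijection between $S_2$ and the admissible paths from $i-1$ to $j-1$. For a pair in $S_2$, $T = \tau_\ell$ for a unique $\ell \in \{i, \dots, j-1\}$, and the triangle $T$ contains the vertices $\ell$ and $j$. If $\ell = j-1$ then $T$ is the unique triangle on the polygon edge $(j-1, j)$; otherwise $(\ell, j)$ is a diagonal of the triangulation. In either case the diagonal (or edge) $(\ell, j)$ cuts off a sub-polygon containing the vertices $\ell+1, \dots, j-1$, and one can reroute the portion $(\tau_\ell, \tau_{\ell+1}, \dots, \tau_{j-1})$ of $\sigma$ within this sub-polygon to produce the required admissible path of length one less, with a reversible procedure.

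An alternative approach that avoids this case analysis is a nested induction on the polygon size $n$ together with $d$: remove an ear of the triangulation at a vertex $k$ with $a_k = 1$, use the van Gogh lemma (Lemma~\ref{lem:vangogh}) to relate the frieze elements of the two triangulations, and analyze what happens to admissible paths. If $k \notin \{i-1, \dots, j+1\}$ the path count is unaffected; if $k \in \{i, \dots, j\}$ then $\tau_k$ is forced to be the ear triangle $T^* = (k-1, k, k+1)$, and admissible paths of the original triangulation decompose according to whether they use $T^*$ in a way that matches the diagonal identity from the van Gogh lemma. The endpoint cases $k \in \{i-1, j+1\}$ are handled similarly.
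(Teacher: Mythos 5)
First, a point of comparison: the paper itself does not prove this proposition --- it states it and refers the reader to \cite{BrolineCroweIsaacs74} --- so there is no in-paper argument to measure yours against. Your framework is the natural one and the sound parts are genuinely sound: since the diagonal of a frieze is determined by the values $a_{i,i-1}=1$, $a_{ii}=a_i$ together with the recurrence $a_{ij}=a_j\,a_{i,j-1}-a_{i,j-2}$ of Theorem~\ref{thm:diagfrom2}, it suffices to prove $N(i,j)=a_j\,N(i,j-1)-N(i,j-2)$; your base cases match the paper's remark that a path from $i-1$ to $i+1$ is a single triangle at $i$ and a path from $i-1$ to $i$ is empty; and the bijection $|S_1|=N(i,j)$ (append the new triangle as $\tau_j$, or truncate the last triangle) is correct.

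The gap is the claim $|S_2|=N(i,j-2)$, which is where the entire content of the theorem lives, and the ``rerouting'' does not yet constitute a proof. Concretely: (i) the rerouted path is never defined --- given $T=\tau_\ell\ni j$, the chord $(\ell,j)$ is a side of $T$ and the triangles $\tau_{\ell+1},\dots,\tau_{j-1}$ do lie in the sub-polygon $Q$ on vertices $\ell,\dots,j$, but you must exhibit a \emph{specific} admissible path from $\ell-1$ to $j-1$ replacing $(\tau_\ell,\dots,\tau_{j-1})$ and prove the assignment is well defined, injective and surjective; (ii) $T$ itself need not lie in $Q$ (its third vertex can sit on either side of the chord $(\ell,j)$), and the two cases behave differently; (iii) the rerouted triangles must be shown distinct from the untouched prefix $\tau_i,\dots,\tau_{\ell-1}$; (iv) the inverse map is genuinely global: for $\ell=j-1$ the forward map just deletes the unique triangle $T_0$ on the edge $(j-1,j)$, so its image consists exactly of the paths from $i-1$ to $j-1$ avoiding $T_0$, and the paths containing $T_0$ must be produced by other values of $\ell$ --- so one cannot treat each $\ell$ in isolation. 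Until the bijection is written down and these points are checked, the recurrence is unproved. The same verdict applies to your alternative ear-removal induction via Lemma~\ref{lem:vangogh}: the interaction of ear removal with admissible paths (in particular when the removed vertex is an endpoint $i-1$ or $j+1$, or when the ear triangle is forced into or out of a path) is precisely where the work lies, and it is only named, not carried out. Either route can very likely be completed --- this is essentially the kind of argument in \cite{BrolineCroweIsaacs74} --- but as written this is a plan for a proof rather than a proof.
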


We will not prove this proposition; readers may try to reconstruct the proof themselves or read it in~\cite{BrolineCroweIsaacs74}.

\begin{remark} Note that by definition, a path from vertex $(i-1)$ to vertex $(i+1)$ consists of one triangle passing through vertex $i$. Then the number of admissible paths between these vertices is simply $a_i$. Furthermore, the path from vertex $(i-1)$ to vertex $i$ is unique: it consists of \emph{zero} triangles, so all elements of the frieze's first row equal $1$.
\end{remark}

\begin{example} Consider the following triangulation of a heptagon.
\begin{figure*}[h!]
\includegraphics{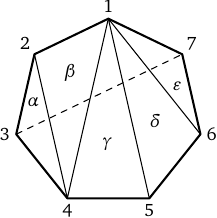}
\end{figure*}

Let $\alpha,\beta,\gamma,\delta,\varepsilon$ be the triangles in it. List all admissible paths $(\tau_1,\tau_2)$ between vertices $7$ and $3$. These paths are as follows:
\[
(\varepsilon,\beta), \quad(\varepsilon,\alpha),\quad(\delta,\beta),\quad
(\delta,\alpha),\quad (\gamma,\beta),\quad (\gamma,\alpha),\quad (\beta,\alpha).
\]
This gives $a_{12}=7$. (Note that in this example $a_1=4$, $a_2=2$).
\end{example}

\begin{exercise}[non-trivial] \textbf{a)} Show that an admissible path as a sequence is uniquely determined by its set of triangles.

\textbf{b)} Verify that the complement to a path from vertex $i$ to vertex $j$ determines a path from vertex $j$ to vertex $i$. For instance, in the above example if $(\tau_1,\tau_2)$ is an admissible path from $7$ to $3$, then $\{\alpha,\beta,\gamma,\delta,\epsilon\}\setminus 
\{\tau_1,\tau_2\}$ determines an admissible path from $3$ to $7$. Derive from this statement another proof of the glide symmetry of friezes.
\end{exercise}

\begin{remark} Most results from this section can be found in two papers by J.\,Conway and H.\,S.\,M.\,Coxeter~\cite{ConwayCoxeter73a}, \cite{ConwayCoxeter73b}. These papers, published in two consecutive issues of the educational journal \emph{The Mathematical Gazette}, are structured as problem sets: the first article presents a series of statements about friezes as a sequence of about thirty problems, while the second provides their solutions or sufficiently detailed hints.
\end{remark}


\section{Continued fractions}

In the next chapters, we will see how continuants, polygon triangulations, and friezes arise in connection with the classical problem of expanding a number into a continued fraction.

\subsection{Convergents}

Let $p/q > 1$ be an irreducible fraction. It can be expanded into a continued fraction as follows: take the integer part of $p/q$ by dividing $p$ by $q$ with remainder. Let $a_1$ be the quotient and $r$ the remainder; then
\[
\frac{p}{q} = a_1 + \frac{r}{q}.
\]
The fraction $q/r$, reciprocal to $r/q$, will now be greater than 1, so we can again extract its integer part, invert the fractional part, and so on. At each step, the denominators of the fractions decrease, so the process will eventually terminate (at some point the next fraction will be an integer). Thus we obtain the representation
\[
\frac{p}{q} = a_1 + \cfrac{1}{a_2 + \cfrac{1}{a_3 + \cfrac{1}{\dots + \cfrac{1}{a_n}}}}.
\]
For brevity, we will denote the right-hand side by $[a_1, a_2, a_3 \dots, a_n]$.

This representation is almost unique; the only ambiguity is that the last number can be represented either as $a_n$ or as $(a_n - 1) + \dfrac{1}{1}$. We will get rid of this ambiguity by requiring that the expansion of a rational number into a continued fraction must always have an \emph{even} number of components.

\begin{example}\label{ex:running} The expansion of $7/5$ into a continued fraction is:
\[
\frac{7}{5} = 1 + \cfrac{1}{2 + \cfrac{1}{1 + \cfrac{1}{1}}} = [1, 2, 1, 1].
\]
\end{example}

Suppose we are given the continued fraction expansion of a number: $p/q = [a_1, \dots, a_n]$. We can consider not the entire expansion, but only the part formed by its first $i$ terms: $p_i/q_i = [a_1, \dots, a_i]$. The result is called the \emph{$i$-th convergent} to $p/q$. For example, the convergents to $7/5$ from the previous example are $1$, $3/2$, and $4/3$.

By the way, note that the procedure of expanding a number into a continued fraction is exactly the Euclidean algorithm applied to the numerator and denominator of the original fraction.

\subsection{Matrices of continued fractions}

Consider a rational number written as an irreducible fraction $p/q > 1$. When we expand it into a continued fraction, at each step we extract its integer part (denoted by $a_1$) and invert the fractional part, thereby obtaining a new irreducible fraction $p'/q'$. Let us see how these numbers are related:
\[
\frac{p}{q} = a_1 + \frac{1}{p'/q'};\qquad \frac{p}{q} = \frac{a_1 p' + q'}{p_1}.
\]
From this we can write the equalities
\begin{eqnarray*}
p &=& a_1 p' + q';\\
q &=& p'.
\end{eqnarray*}
Note that here we used the irreducibility of the fractions $p/q$ and $p'/q'$.

Thus, the numbers $p$ and $q$ are obtained from $p'$ and $q'$ by a linear transformation given by the following matrix:
\[
\begin{pmatrix}p\\q
\end{pmatrix}=
\begin{pmatrix}a_1 & 1\\1 & 0
\end{pmatrix}
\begin{pmatrix}p'\\q'
\end{pmatrix}.
\]
We can repeat the same procedure by extracting the integer part $a_2$ from the fraction $p'/q'$; we obtain a new fraction $p''/q''$, which gives us the equality
\[
\begin{pmatrix}p\\q
\end{pmatrix}=
\begin{pmatrix}a_1 & 1\\1 & 0
\end{pmatrix}
\begin{pmatrix}a_2 & 1\\1 & 0
\end{pmatrix}
\begin{pmatrix}p''\\q''
\end{pmatrix}.
\]
This process terminates when the next fraction $p^{(n-1)}/q^{(n-1)}$ becomes an integer $a_n$, which is equivalent to $p^{(n-1)} = a_n$ and $q^{(n-1)} = 1$. We obtain the equality
\[
\begin{pmatrix}p\\q
\end{pmatrix}=
\begin{pmatrix}a_1 & 1\\1 & 0
\end{pmatrix}
\dots \begin{pmatrix}a_{n-1} & 1\\1 & 0
\end{pmatrix}
\begin{pmatrix}a_n\\1
\end{pmatrix}.
\]

We can also ``extract the fractional part'' from the integer $a_n$, obtaining the ``fraction'' $p^{(n)}/q^{(n)} = 1/0$. Of course, the rational number $1/0$ does not exist, but at the matrix level everything works fine:
\[
\begin{pmatrix}p\\q
\end{pmatrix}=
\begin{pmatrix}a_1 & 1\\1 & 0
\end{pmatrix}
\dots \begin{pmatrix}a_{n-1} & 1\\1 & 0
\end{pmatrix}
\begin{pmatrix}a_n & 1\\1 & 0
\end{pmatrix}
\begin{pmatrix}1\\0
\end{pmatrix}.
\]

What happens if we apply the same product of matrices not to the vector $\begin{pmatrix}1\\0\end{pmatrix}$, but to the other basis vector $\begin{pmatrix}0\\1\end{pmatrix}$? We obtain the equality
\[
\begin{pmatrix}r\\s
\end{pmatrix}=
\begin{pmatrix}a_1 & 1\\1 & 0
\end{pmatrix}
\dots \begin{pmatrix}a_{n-1} & 1\\1 & 0
\end{pmatrix}
\begin{pmatrix}a_n & 1\\1 & 0
\end{pmatrix}
\begin{pmatrix}0\\1
\end{pmatrix}=
\begin{pmatrix}a_1 & 1\\1 & 0
\end{pmatrix}
\dots \begin{pmatrix}a_{n-1} & 1\\1 & 0
\end{pmatrix}
\begin{pmatrix}1\\0
\end{pmatrix}.
\]

We have represented the vector with components $r$ and $s$ as a product of matrices of the form $\begin{pmatrix}a_i & 1\\1 & 0\end{pmatrix}$, but for the tuple $(a_1, \dots, a_{n-1})$ rather than $(a_1, \dots, a_n)$. It follows that $r/s$ equals the \emph{last convergent} $p_{n-1}/q_{n-1}$ to $p/q$:
\[
p/q = [a_1, \dots, a_n];\qquad r/s = [a_1, \dots, a_{n-1}].
\] 

\begin{definition} The \emph{continued fraction matrix} for the number $p/q = [a_1, \dots, a_n]$, $p/q > 1$, is the matrix
\[
M^+(a_1, \dots, a_n) = \begin{pmatrix}a_1 & 1\\1 & 0
\end{pmatrix}
\dots 
\begin{pmatrix}a_n & 1\\1 & 0
\end{pmatrix}.
\]
\end{definition}

We obtain the following proposition.

\begin{prop}\label{prop:matrixpos} Let $M^+(a_1, \dots, a_{2m})$ be the continued fraction matrix for $p/q$, and let $r/s = p_{2m-1}/q_{2m-1}$ be the last convergent to $p/q$. Then
\[
M^+(a_1, \dots, a_{2m}) = \begin{pmatrix}p & r\\q & s
\end{pmatrix}.
\]
\end{prop}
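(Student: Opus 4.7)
The plan is to verify the identity column by column, using what $M^+(a_1,\dots,a_{2m})$ does to the standard basis vectors $\binom{1}{0}$ and $\binom{0}{1}$.

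First I would establish that $M^+(a_1,\dots,a_{2m})\binom{1}{0} = \binom{p}{q}$, which is essentially the content of the calculation preceding the proposition. A short induction on the length of the expansion does the job: the rightmost factor sends $\binom{1}{0}$ to $\binom{a_n}{1}$, and each subsequent left multiplication by $\bigl(\begin{smallmatrix}a_i & 1 \\ 1 & 0\end{smallmatrix}\bigr)$ reverses one step of the Euclidean procedure (passing from $p^{(i)}/q^{(i)}$ back to $p^{(i-1)}/q^{(i-1)}$), so after all multiplications we recover $\binom{p}{q}$.

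Next I would handle the second column using the telescoping identity $\bigl(\begin{smallmatrix}a_{2m} & 1 \\ 1 & 0\end{smallmatrix}\bigr)\binom{0}{1} = \binom{1}{0}$. This collapses $M^+(a_1,\dots,a_{2m})\binom{0}{1}$ to $M^+(a_1,\dots,a_{2m-1})\binom{1}{0}$, to which the first-column argument applies verbatim but now for the shorter expansion $[a_1,\dots,a_{2m-1}] = p_{2m-1}/q_{2m-1}$, yielding $\binom{r}{s}$.

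The only subtle point is to confirm that $(p,q)$ and $(r,s)$ really are coprime pairs, rather than merely arbitrary rational representatives of $p/q$ and $r/s$. For this I would invoke the determinant: each factor $\bigl(\begin{smallmatrix}a_i & 1 \\ 1 & 0\end{smallmatrix}\bigr)$ has determinant $-1$, so $\det M^+(a_1,\dots,a_{2m}) = (-1)^{2m} = 1$, giving $ps - qr = 1$ and hence $\gcd(p,q) = \gcd(r,s) = 1$. The parity hypothesis $n = 2m$ is essential here: odd-length expansions would give determinant $-1$, flipping signs and spoiling the clean identification $M^+ = \bigl(\begin{smallmatrix}p & r \\ q & s\end{smallmatrix}\bigr)$. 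I anticipate no real obstacle beyond this bookkeeping; the result is really just a restatement of the derivation already done in the text, packaged into a single matrix equality.
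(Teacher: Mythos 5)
Your proposal is correct and follows essentially the same route as the paper: the first column is obtained by reversing the Euclidean steps one matrix at a time (exactly the derivation preceding the proposition), and the second column collapses via $\bigl(\begin{smallmatrix}a_{2m} & 1\\ 1 & 0\end{smallmatrix}\bigr)\binom{0}{1}=\binom{1}{0}$ to the same computation for the truncated expansion $[a_1,\dots,a_{2m-1}]$. Your added determinant check for coprimality is a harmless extra; the paper instead builds irreducibility into each Euclidean step and records the determinant consequence separately in Proposition~\ref{prop:contfrac}.
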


The same can be said for any pair of adjacent convergents, not necessarily the last.

\begin{corollary} Let $p_{i-1}/q_{i-1}$ and $p_i/q_i$ be the $(i-1)$-th and $i$-th convergents to $p/q = [a_1, \dots, a_n]$, respectively. Then
\begin{equation}\label{eq:contdecomp}
\begin{pmatrix}p_i & p_{i-1}\\q_i & q_{i-1}
\end{pmatrix}=
\begin{pmatrix}a_1 & 1\\1 & 0
\end{pmatrix}
\dots 
\begin{pmatrix}a_i & 1\\1 & 0
\end{pmatrix}.
\end{equation}
\end{corollary}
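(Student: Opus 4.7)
My plan is to deduce the identity directly from Proposition \ref{prop:matrixpos} by applying it to the truncated continued fraction, and to supply a short induction as a fallback.

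\emph{Main approach.} By definition the $i$-th convergent is the rational number $p_i/q_i = [a_1, \dots, a_i]$. The derivation immediately preceding Proposition \ref{prop:matrixpos}—iteratively extracting the integer part $a_1$, inverting the fractional part, and then repeating, while keeping track of the accumulating product of matrices $\begin{pmatrix} a_j & 1 \\ 1 & 0 \end{pmatrix}$—uses only the definition of the continued fraction expansion and nowhere relies on the number of components being even; the even-length convention was imposed only so that the expansion of a rational is unique. Applying that same argument to the length-$i$ expansion $[a_1,\dots,a_i]$ yields
\[
M^+(a_1,\dots,a_i)=\begin{pmatrix} p_i & r \\ q_i & s \end{pmatrix},
\]
where $r/s$ is the ``last convergent'' of $[a_1,\dots,a_i]$, namely $[a_1,\dots,a_{i-1}]=p_{i-1}/q_{i-1}$. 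This is exactly the claimed equality.

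\emph{Backup approach (induction on $i$).} For $i=1$, with the standard convention $p_0=1$, $q_0=0$, one checks directly that $\begin{pmatrix} a_1 & 1 \\ 1 & 0 \end{pmatrix}=\begin{pmatrix} p_1 & p_0 \\ q_1 & q_0 \end{pmatrix}$ since $p_1/q_1=a_1/1$. For the inductive step, right-multiplying the $(i-1)$-case by $\begin{pmatrix} a_i & 1 \\ 1 & 0 \end{pmatrix}$ gives
\[
\begin{pmatrix} p_{i-1} & p_{i-2} \\ q_{i-1} & q_{i-2} \end{pmatrix}\begin{pmatrix} a_i & 1 \\ 1 & 0 \end{pmatrix}=\begin{pmatrix} a_i p_{i-1}+p_{i-2} & p_{i-1} \\ a_i q_{i-1}+q_{i-2} & q_{i-1} \end{pmatrix},
\]
so the identity reduces to the classical convergent recurrence $p_i=a_i p_{i-1}+p_{i-2}$, $q_i=a_i q_{i-1}+q_{i-2}$. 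This recurrence is itself a consequence of Proposition \ref{prop:matrixpos} applied to the truncation, or can be verified directly from the definition of convergents.

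\emph{Main obstacle.} The only subtlety is the mismatch between the even-length convention used in Proposition \ref{prop:matrixpos} and the fact that $i$ here is arbitrary. This is genuinely a non-issue because the derivation preceding the proposition is length-agnostic; if one insists on using the proposition as a black box, one can instead apply it to whichever of $[a_1,\dots,a_i]$ or $[a_1,\dots,a_i-1,1]$ has even length and then use the identity of these two expansions to transfer the conclusion. Either way the argument is essentially a one-line reduction, and the heart of the statement really lives in Proposition \ref{prop:matrixpos}.
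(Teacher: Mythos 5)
Your main argument is correct and is essentially the paper's own: the corollary is presented there as an immediate consequence of the derivation preceding Proposition~\ref{prop:matrixpos} applied to the truncated expansion $[a_1,\dots,a_i]$, exactly as you do, and you rightly observe that the even-length convention plays no role in that derivation. The backup induction is also fine, though note that the recurrence $p_i=a_ip_{i-1}+p_{i-2}$ is deduced in the paper \emph{from} this corollary, so in that fallback you should rely on your direct verification of the recurrence rather than on Proposition~\ref{prop:contfrac}.
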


From this statement, the following properties of convergents immediately follow.

\begin{prop}\label{prop:contfrac} The numerators and denominators of the convergents $p_i/q_i = [a_1, \dots, a_i]$ to $p/q = [a_1, \dots, a_n]$ satisfy the following equalities:

\begin{enumerate}
\item $p_i q_{i-1} - p_{i-1} q_i = (-1)^i$;
\item $p_i$ and $q_i$ are coprime;
\item For $p_i$ and $q_i$, the recurrence relations $p_k = a_i p_{i-1} + p_{i-2}$ and $q_i = a_i q_{i-1} + q_{i-2}$ hold, with initial conditions $p_0 = 1$, $p_1 = a_1$, $q_0 = 0$, $q_1 = 1$;
\item The sequences of numerators and denominators of the convergents are increasing: \mbox{$p_{i-1} < p_i$} and $q_{i-1} < q_i$ for any $i$.
\end{enumerate}
\end{prop}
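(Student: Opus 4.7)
The entire proposition follows from the matrix identity~(\ref{eq:contdecomp}), which was established just above. The plan is to read all four claims off of this identity.

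First I would prove (1) by taking determinants on both sides of~(\ref{eq:contdecomp}). Each factor $\begin{pmatrix} a_k & 1\\ 1 & 0\end{pmatrix}$ has determinant $-1$, so the product of $i$ such matrices has determinant $(-1)^i$. On the left-hand side the determinant equals $p_i q_{i-1} - p_{i-1} q_i$, giving the desired equality. Claim (2) is then immediate: any common divisor of $p_i$ and $q_i$ divides $p_i q_{i-1} - p_{i-1} q_i = \pm 1$, so $\gcd(p_i, q_i) = 1$.

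For (3), I would rewrite~(\ref{eq:contdecomp}) by splitting off the last matrix:
\[
\begin{pmatrix} p_i & p_{i-1}\\ q_i & q_{i-1}\end{pmatrix}
= \begin{pmatrix} p_{i-1} & p_{i-2}\\ q_{i-1} & q_{i-2}\end{pmatrix}\begin{pmatrix} a_i & 1\\ 1 & 0\end{pmatrix}.
\]
Carrying out the matrix product and comparing the $(1,1)$ and $(2,1)$ entries yields $p_i = a_i p_{i-1} + p_{i-2}$ and $q_i = a_i q_{i-1} + q_{i-2}$; the $(1,2)$ and $(2,2)$ entries are tautologies. The initial conditions are obtained by evaluating~(\ref{eq:contdecomp}) at $i=1$, where the right-hand side is simply $\begin{pmatrix} a_1 & 1\\ 1 & 0\end{pmatrix}$, and by extending one step back to $i=0$, which forces $p_0 = 1,\ q_0 = 0$.

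Finally, for (4) I would argue by induction using the recurrences from~(3). Since every $a_i \geq 1$, we have $q_1 = 1 > 0 = q_0$, and for $i \geq 2$,
\[
q_i = a_i q_{i-1} + q_{i-2} \geq q_{i-1} + q_{i-2} > q_{i-1},
\]
because $q_{i-2} \geq 1$ once $i \geq 2$ (and even for $i=2$, we get $q_2 = a_2 \geq 1$, which is only non-strict if $a_2 = 1$; a brief check of this base case, together with the same argument for the $p_i$'s using $p_0 = 1$, handles the corner cases). The same inductive step applies verbatim to $p_i$, since $p_0 = 1$ and $p_1 = a_1 \geq 1$. The only subtle point, and the place I would be most careful, is the very beginning of the induction: if $a_1 = 1$ (as in Example~\ref{ex:running}) then $p_1 = p_0 = 1$, so the strict inequality for $p_i$ only kicks in from $i \geq 2$; I would phrase (4) accordingly and simply verify strict monotonicity from the first index where $a_i$ contributes a genuine increment.
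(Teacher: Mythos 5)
Your proposal is correct and follows essentially the same route as the paper: determinants of~(\ref{eq:contdecomp}) for (1), divisibility for (2), peeling off the last matrix factor for (3), and monotonicity from the recurrence for (4). Your extra care in (4) is actually a point the paper glosses over — since $p_1=a_1$ may equal $p_0=1$ (as for $7/5=[1,2,1,1]$) and $q_2=a_2$ may equal $q_1=1$, strict monotonicity genuinely only holds from the first index where the recurrence contributes a positive increment, so your more cautious phrasing is an improvement rather than a deviation.
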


\begin{proof}
(1) Note that the determinant of a matrix of the form $\begin{pmatrix}a & 1\\1 & 0\end{pmatrix}$ equals $-1$. Therefore,
\[
\det \begin{pmatrix}p_i & p_{i-1}\\q_i & q_{i-1}\end{pmatrix} = (-1)^i.
\]
In particular, if we require the number of components in this expansion to be even, we obtain the equality $\det M^+(a_1, \dots, a_{2m}) = 1$.

(2) This follows directly from (1).

(3) Consider the last factor in the product on the right-hand side of equality (\ref{eq:contdecomp}):
\[
\begin{pmatrix}p_i & p_{i-1}\\q_i & q_{i-1}
\end{pmatrix}=
\begin{pmatrix}p_{i-1} & p_{i-2}\\q_{i-1} & q_{i-2}
\end{pmatrix}
\begin{pmatrix}a_i & 1\\1 & 0
\end{pmatrix}.
\]
For $i = 1$, we obtain the equality
\[
\begin{pmatrix}p_1 & p_0\\q_1 & q_0
\end{pmatrix}=
\begin{pmatrix}a_1 & 1\\1 & 0
\end{pmatrix},
\]
which gives us the initial conditions. For arbitrary $k$, the equality of the first columns of these matrices exactly corresponds to the recurrence relations.

Statement (4) follows from (3), since all $a_i$ are strictly positive.
\end{proof}

\begin{remark}
Of course, all the listed properties of convergents can be proved without resorting to matrix language. The reader can do this themself or read about it, for example, in the classic book by Alexander Khinchin ``Continued Fractions''~\cite{Khinchin97}. 
\end{remark}

\subsection{Continuants again}

We see that relation (3) from Proposition~\ref{prop:contfrac} is very similar to the recurrence relation for continuants (Proposition~\ref{prop:contrecurrence}): they differ only in sign. Therefore, we can define \emph{positive continuants} using the relation $K_0(\cdot) = 1$, $K_1(a_1) = a_1$, $K_i(a_1, \dots, a_i) = a_i K_{i-1}(a_1, \dots, a_{i-1}) + K_{i-2}(a_1, \dots, a_{i-2})$.

\begin{exercise} Formulate and prove analogs of Euler's rule (Definition~\ref{def:euler}) and the properties from Exercise~\ref{ex:continuant} for positive continuants and interpret them similarly to Proposition~\ref{prop:contdet} in terms of expanding determinants by row or column.
\end{exercise}

Then Proposition~\ref{prop:contfrac} (3) takes the following form:
\begin{corollary}
The $i$-th convergent to the fraction $p/q = [a_1, \dots, a_{2m}]$ equals the ratio of continuants
\[
\frac{p_i}{q_i} = \frac{K_i(a_1, \dots, a_i)}{K_{i-1}(a_1, \dots, a_{i-1})},
\]
and the fraction in the right-hand side is irreducible.
\end{corollary}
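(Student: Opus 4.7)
The proof I would write is a straightforward induction on $i$, keyed to the observation that the positive continuants $K_i$ and the sequences $(p_i), (q_i)$ of convergent numerators and denominators satisfy the \emph{same} three-term recurrence $x_i = a_i x_{i-1} + x_{i-2}$: for $K_i$ this is the defining relation just introduced, for $p_i$ and $q_i$ this is Proposition~\ref{prop:contfrac}(3). Once the two recurrences are seen to coincide, the whole argument reduces to matching the initial conditions on each side.

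For the numerator, the base cases are $p_0 = 1 = K_0$ and $p_1 = a_1 = K_1(a_1)$, both immediate from Proposition~\ref{prop:contfrac}(3); the induction then yields $p_i = K_i(a_1, \dots, a_i)$ for every $i \leq 2m$. For the denominator the base cases are $q_1 = 1 = K_0$ and $q_2 = a_2 q_1 + q_0 = a_2 = K_1(a_2)$, which identify $q_i$ as the continuant of the \emph{shifted} tuple, i.e.\ $q_i = K_{i-1}(a_2, \dots, a_i)$; by the symmetry $K_{i-1}(a_2, \dots, a_i) = K_{i-1}(a_i, \dots, a_2)$ — the positive-continuant analog of Exercise~\ref{ex:continuant}(1) that the preceding exercise asks the reader to formulate — this may be rewritten in the reversed form if desired.

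The irreducibility claim requires no fresh argument at all: it is already asserted in Proposition~\ref{prop:contfrac}(2), and in any case follows at once from Proposition~\ref{prop:contfrac}(1), since $p_i q_{i-1} - p_{i-1} q_i = (-1)^i$ forces any common divisor of $p_i$ and $q_i$ to divide $1$. I anticipate no real obstacle in writing this up; the only point demanding mild care is the index shift between the continuants describing $p_i$ and $q_i$, which is forced by the asymmetric initial conditions $p_0 = 1, p_1 = a_1$ versus $q_0 = 0, q_1 = 1$ and must be tracked honestly through the induction.
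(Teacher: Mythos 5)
Your argument is correct and is exactly the one the paper intends: the corollary is stated as an immediate reformulation of Proposition~\ref{prop:contfrac}(3), the whole content being that $p_i$, $q_i$ and the positive continuants obey the same three-term recurrence $x_i=a_ix_{i-1}+x_{i-2}$, so only the initial conditions need matching, and irreducibility is already Proposition~\ref{prop:contfrac}(2). One point worth flagging: what your induction actually establishes for the denominator is $q_i=K_{i-1}(a_2,\dots,a_i)$, which is \emph{not} the expression $K_{i-1}(a_1,\dots,a_{i-1})$ printed in the statement. The printed version is a typo --- for $p/q=[1,2,1,1]$ and $i=2$ it would give $K_2(1,2)/K_1(1)=3/1$ rather than the second convergent $3/2$ --- and your shifted-tuple formula is the correct one. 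Note also that the reversal symmetry you invoke only turns $K_{i-1}(a_2,\dots,a_i)$ into $K_{i-1}(a_i,\dots,a_2)$, not into $K_{i-1}(a_1,\dots,a_{i-1})$, so it cannot be used to reconcile your formula with the printed one; your insistence on tracking the index shift honestly is precisely what exposes the discrepancy.
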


\subsection{Negative continued fractions}

When defining a continued fraction, we took the ``floor'' integer part of a number, that is, the largest integer not exceeding this fraction. What happens if we the integer part with an excess: the ``ceiling'' function,  and then apply the same procedure to the difference between it and the original number? We obtain an expansion of the number into a \emph{negative continued fraction}; sometimes it is also called a \emph{Hirzebruch continued fraction\footnote{Friedrich Hirzebruch (1927--2012): German mathematician, one of the founders of the Max Planck Institute for Mathematics in Bonn.}}. To avoid confusion, we will call the ``usual'' continued fraction a \emph{positive continued fraction}.

In this case, all components of the negative continued fraction for a number greater than 1 are natural numbers not less than 2; such an expansion is  unique. We denote it by double square brackets.
\[
\frac{p}{q} = c_1 - \cfrac{1}{c_2 - \cfrac{1}{c_3 - \cfrac{1}{\dots}}} = [[c_1, c_2, c_3 \dots]].
\]

\begin{example} The number $7/5$ expands into a negative continued fraction as
\[
\frac{7}{5} = 2 - \cfrac{1}{2 - \cfrac{1}{3}} = [[2, 2, 3]].
\]
\end{example}

For negative continued fractions, convergents are defined similarly. We will denote them by $\tilde{p}_i/\tilde{q}_i$. They are obtained by taking the first $i$ terms of the expansion: $\tilde{p}_i/\tilde{q}_i = [[c_1, \dots, c_i]]$. For example, the convergents to $7/5$ are $2/1$ and $3/2$.

Similarly, we can define negative continued fraction matrices. Let $p/q$ be the original fraction, and suppose that the fraction $\tilde{p}/\tilde{q}$ is obtained from it by subtracting the ceiling integer part, changing the sign, and taking the reciprocal. Since
\[
\frac{p}{q} = c_1 - \frac{1}{\tilde{p}/\tilde{q}} = \frac{c_1 \tilde{p} - \tilde{q}}{\tilde{p}},
\]
the transition between the previous and the next continued fractions is given by the matrix relation
\[
\begin{pmatrix}p\\q
\end{pmatrix}=
\begin{pmatrix}c_1 & -1\\1 & 0
\end{pmatrix}
\begin{pmatrix}\tilde{p}\\ \tilde{q}
\end{pmatrix}.
\]

Now we can give the definition of the negative continued fraction matrix and prove the following proposition, analogous to Proposition~\ref{prop:matrixpos}.

\begin{definition} The \emph{negative continued fraction matrix} for the number $p/q = [[c_1, \dots, c_k]]$, $p/q > 1$, is the matrix
\[
M(c_1, \dots, c_k) = \begin{pmatrix}c_1 & -1\\1 & 0
\end{pmatrix}
\dots 
\begin{pmatrix}c_k & -1\\1 & 0
\end{pmatrix}.
\]
\end{definition}

\begin{remark}
One of the advantages of working with matrices is that the matrices $M(c_1, \dots, c_k)$ turn out to be more ``universal'' than continued fractions: they are defined for any sequence of coefficients, whereas for continued fractions this is not the case, since some denominators may become zero. As an example, consider the following ``continued fraction":
\[
[[3, 1, 2, 1]] = 3 - \cfrac{1}{1 - \cfrac{1}{2 - \cfrac{1}{1}}}.
\]
It does not correspond to any rational number, while the matrix $M(3, 1, 2, 1)$ is well-defined (compute it!).
\end{remark}

\begin{prop}\label{prop:matrixneg} Let $M(c_1, \dots, c_k)$ be the negative continued fraction matrix for $p/q$, and let $\tilde{r}/\tilde{s} = \tilde{p}_{k-1}/\tilde{q}_{k-1}$ be the last negative convergent to $p/q$. Then
\[
M(c_1, \dots, c_k) = \begin{pmatrix}p & -\tilde{r}\\q & -\tilde{s}
\end{pmatrix}.
\]
\end{prop}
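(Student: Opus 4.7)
The plan is to mirror the proof of Proposition~\ref{prop:matrixpos}: rather than expanding the product directly, I would compute the two columns of $M(c_1,\dots,c_k)$ separately by applying the product to the standard basis vectors $\begin{pmatrix}1\\0\end{pmatrix}$ and $\begin{pmatrix}0\\1\end{pmatrix}$.

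For the first column, I would prove by induction on $k$ that
\[
M(c_1,\dots,c_k)\begin{pmatrix}1\\0\end{pmatrix}=\begin{pmatrix}p\\q\end{pmatrix}.
\]
The base case $k=1$ reads $\begin{pmatrix}c_1\\1\end{pmatrix}=\begin{pmatrix}p\\q\end{pmatrix}$, which is just the trivial expansion $p/q=[[c_1]]$. For the inductive step, the ``peeling'' identity
\[
\begin{pmatrix}p\\q\end{pmatrix}=\begin{pmatrix}c_1 & -1\\1 & 0\end{pmatrix}\begin{pmatrix}\tilde p\\\tilde q\end{pmatrix},
\]
derived just before the definition of $M$, expresses the passage from $[[c_1,\dots,c_k]]$ to the shorter expansion $[[c_2,\dots,c_k]]=\tilde p/\tilde q$; applying the induction hypothesis to the latter gives the desired equality after one further matrix multiplication.

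For the second column, the key observation is the direct calculation
\[
\begin{pmatrix}c_k & -1\\1 & 0\end{pmatrix}\begin{pmatrix}0\\1\end{pmatrix}=\begin{pmatrix}-1\\0\end{pmatrix},
\]
which collapses the last factor of the product and yields
\[
M(c_1,\dots,c_k)\begin{pmatrix}0\\1\end{pmatrix}=-\,M(c_1,\dots,c_{k-1})\begin{pmatrix}1\\0\end{pmatrix}.
\]
The truncated sequence $(c_1,\dots,c_{k-1})$ still consists of integers $\geq 2$, so it is itself a legitimate negative continued fraction whose value is, by definition, the convergent $\tilde p_{k-1}/\tilde q_{k-1}=\tilde r/\tilde s$. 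Applying the already-established first-column identity to this shorter expansion gives $M(c_1,\dots,c_{k-1})\begin{pmatrix}1\\0\end{pmatrix}=\begin{pmatrix}\tilde r\\\tilde s\end{pmatrix}$, so the second column of $M(c_1,\dots,c_k)$ equals $\begin{pmatrix}-\tilde r\\-\tilde s\end{pmatrix}$, as required.

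The main obstacle is nothing conceptual, only bookkeeping: I need to verify that the truncation $[[c_1,\dots,c_{k-1}]]$ is a bona fide negative continued fraction (which it is, since all $c_i\geq 2$), and to check the edge case $k=1$, where the product reduces to the single matrix $\begin{pmatrix}c_1 & -1\\1 & 0\end{pmatrix}$ and the ``second column'' formula is consistent only with the convention $\tilde p_0/\tilde q_0 = 1/0$, i.e.\ $\begin{pmatrix}\tilde r\\\tilde s\end{pmatrix}=\begin{pmatrix}1\\0\end{pmatrix}$. Beyond this, no idea beyond that of Proposition~\ref{prop:matrixpos} is needed.
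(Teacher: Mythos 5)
Your proposal is correct and follows exactly the route the paper intends: iterate the ``peeling'' relation $\begin{pmatrix}p\\q\end{pmatrix}=\begin{pmatrix}c_1 & -1\\1 & 0\end{pmatrix}\begin{pmatrix}\tilde p\\ \tilde q\end{pmatrix}$ to identify the first column, and collapse the last factor on $\begin{pmatrix}0\\1\end{pmatrix}$ to identify the second column as minus the last convergent, precisely mirroring the argument given for $M^+$ in Proposition~\ref{prop:matrixpos}. Your attention to the sign flip coming from $\begin{pmatrix}c_k & -1\\1 & 0\end{pmatrix}\begin{pmatrix}0\\1\end{pmatrix}=\begin{pmatrix}-1\\0\end{pmatrix}$ and to the convention $\tilde p_0/\tilde q_0=1/0$ covers the only points where the negative case differs from the positive one.
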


From this, the expression for the matrices of convergents immediately follows.

\begin{corollary}
Let $\tilde{p}_{i-1}/\tilde{q}_{i-1}$ and $\tilde{p}_i/\tilde{q}_i$ be the $(i-1)$-th and $i$-th negative convergents to $p/q = [[c_1, \dots, c_k]]$, respectively. Then
\begin{equation}\label{eq:negcontdecomp}
\begin{pmatrix}\tilde{p}_i & \tilde{p}_{i-1}\\\tilde{q}_i & \tilde{q}_{i-1}
\end{pmatrix}=
\begin{pmatrix}c_1 & -1\\1 & 0
\end{pmatrix}
\dots 
\begin{pmatrix}c_i & -1\\1 & 0
\end{pmatrix}.
\end{equation}
\end{corollary}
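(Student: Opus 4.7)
The corollary is essentially a restatement of Proposition~\ref{prop:matrixneg} applied to each convergent rather than just to $p/q$ itself. The plan is to observe that the $i$-th convergent $\tilde{p}_i/\tilde{q}_i = [[c_1,\dots,c_i]]$ is itself a rational number greater than $1$ whose negative continued fraction expansion has coefficients $(c_1,\dots,c_i)$. Hence Proposition~\ref{prop:matrixneg} applies verbatim to this rational number: it identifies the matrix $M(c_1,\dots,c_i)$ as having first column $(\tilde{p}_i,\tilde{q}_i)^{T}$ and second column built from the last convergent of $[[c_1,\dots,c_i]]$. That last convergent is $[[c_1,\dots,c_{i-1}]]$, which is precisely the $(i-1)$-th convergent $\tilde{p}_{i-1}/\tilde{q}_{i-1}$ to the original fraction $p/q$, so the formula~(\ref{eq:negcontdecomp}) follows.

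I would present this as a one-line deduction from Proposition~\ref{prop:matrixneg}, but with the following cautionary remark. Proposition~\ref{prop:matrixneg} produces the second column in the form $(-\tilde{r},-\tilde{s})^{T}$, so one must either adopt the convention for $\tilde{p}_{i-1},\tilde{q}_{i-1}$ used implicitly in the corollary (in particular, the choice of initial values $\tilde{p}_0,\tilde{q}_0$), or observe that the signs are absorbed when one reads off the columns at each step of the recursion. Either way, the key identification is that truncating the negative continued fraction expansion at position $i$ yields the $i$-th convergent.

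As an alternative, one can give a short induction on $i$. The base case $i=1$ is immediate: $M(c_1)=\left(\begin{smallmatrix}c_1 & -1\\ 1 & 0\end{smallmatrix}\right)$ matches the first column $(\tilde{p}_1,\tilde{q}_1)^{T}=(c_1,1)^{T}$ together with the appropriate initial values in the second column. For the inductive step, one writes
\[
M(c_1,\dots,c_i)=M(c_1,\dots,c_{i-1})\begin{pmatrix}c_i & -1\\ 1 & 0\end{pmatrix},
\]
applies the inductive hypothesis to the left factor, and computes the product. The first column yields the recurrences $\tilde{p}_i=c_i\tilde{p}_{i-1}-\tilde{p}_{i-2}$ and $\tilde{q}_i=c_i\tilde{q}_{i-1}-\tilde{q}_{i-2}$, which are exactly the standard recurrences for numerators and denominators of negative convergents (derived in the same way as Proposition~\ref{prop:contfrac}(3)); the second column reproduces the columns of $M(c_1,\dots,c_{i-1})$, completing the step.

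The main (minor) obstacle is bookkeeping of signs and of the initial values $\tilde{p}_0,\tilde{q}_0$, particularly in the second column; there is no conceptual difficulty, since the statement is a direct packaging of Proposition~\ref{prop:matrixneg} into recursive form.
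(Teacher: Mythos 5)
Your proposal is correct and follows essentially the same route as the paper, which derives the corollary by applying Proposition~\ref{prop:matrixneg} to the truncated expansion $[[c_1,\dots,c_i]]$ (whose last negative convergent is $[[c_1,\dots,c_{i-1}]]$, i.e.\ the $(i-1)$-th convergent of $p/q$). Your cautionary remark about signs is well taken: with the initial values $\tilde{p}_0=1$, $\tilde{q}_0=0$ the proposition literally yields second column $(-\tilde{p}_{i-1},-\tilde{q}_{i-1})^{T}$, so the corollary as printed should be read up to that sign convention --- a point the paper glosses over but which does not affect the substance.
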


Thus, for negative convergents, there is an analog of Proposition~\ref{prop:contfrac}.

\begin{prop}\label{prop:negcontfrac}
The numerators and denominators of the negative convergents $\tilde{p}_i/\tilde{q}_i = [[c_1, \dots, c_i]]$ to $p/q = [[c_1, \dots, c_k]]$ satisfy the following equalities:

\begin{enumerate}
\item $\tilde{p}_i \tilde{q}_{i-1} - \tilde{p}_{i-1} \tilde{q}_i = 1$;
\item $\tilde{p}_i$ and $\tilde{q}_i$ are coprime;
\item For $\tilde{p}_i$ and $\tilde{q}_i$, the recurrence relations $\tilde{p}_i = c_i \tilde{p}_{i-1} - \tilde{p}_{i-2}$ and $\tilde{q}_i = c_i \tilde{q}_{i-1} - \tilde{q}_{i-2}$ hold, with initial conditions $\tilde{p}_0 = 1$, $\tilde{p}_1 = c_1$, $\tilde{q}_0 = 0$, $\tilde{q}_1 = 1$;
\item The sequences of numerators and denominators of the negative convergents are increasing: $\tilde{p}_{i-1} < \tilde{p}_i$ and $\tilde{q}_{i-1} < \tilde{q}_i$ for any $i$.
\end{enumerate}
\end{prop}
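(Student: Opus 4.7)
The plan is to mimic the proof of Proposition~\ref{prop:contfrac} step by step, with the matrix identity~(\ref{eq:negcontdecomp}) as the starting point. First, I would note that every factor $\begin{pmatrix}c_i & -1\\1 & 0\end{pmatrix}$ has determinant $+1$ (this is the main numerical difference from the positive case, where the determinant was $-1$). Taking determinants of both sides of~(\ref{eq:negcontdecomp}) gives
\[
\tilde p_i\tilde q_{i-1}-\tilde p_{i-1}\tilde q_i=1
\]
for every $i$, which is~(1). Claim~(2) is then immediate: any common divisor of $\tilde p_i$ and $\tilde q_i$ would also divide the left-hand side, hence divide $1$.

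For~(3), I would isolate the last factor on the right-hand side of~(\ref{eq:negcontdecomp}):
\[
\begin{pmatrix}\tilde p_i & \tilde p_{i-1}\\ \tilde q_i & \tilde q_{i-1}\end{pmatrix}
=\begin{pmatrix}\tilde p_{i-1} & \tilde p_{i-2}\\ \tilde q_{i-1} & \tilde q_{i-2}\end{pmatrix}
\begin{pmatrix}c_i & -1\\ 1 & 0\end{pmatrix}.
\]
Reading off the first column yields $\tilde p_i=c_i\tilde p_{i-1}-\tilde p_{i-2}$ and $\tilde q_i=c_i\tilde q_{i-1}-\tilde q_{i-2}$, while the case $i=1$ gives the matrix identity $\begin{pmatrix}\tilde p_1 & \tilde p_0\\\tilde q_1 & \tilde q_0\end{pmatrix}=\begin{pmatrix}c_1 & -1\\1 & 0\end{pmatrix}$, which fixes the initial conditions $\tilde p_0=1$, $\tilde p_1=c_1$, $\tilde q_0=0$, $\tilde q_1=1$. (To make $\tilde p_2=c_2 c_1-1$ and $\tilde q_2=c_2$ fit the recurrence with $i=2$, one sets $\tilde p_{-1}=0$ and $\tilde q_{-1}=-1$, or simply checks the $i=2$ case by hand.)

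The main obstacle is~(4): in the positive case the recurrence had a $+$ sign, so positivity of the $a_i$ was enough to force strict growth, but here we have a minus sign and must use the crucial fact that every $c_i\geq 2$. I would prove $\tilde p_i>\tilde p_{i-1}$ and $\tilde q_i>\tilde q_{i-1}$ by induction on $i$, tracking the stronger statement that the gaps $\tilde p_i-\tilde p_{i-1}$ and $\tilde q_i-\tilde q_{i-1}$ are positive and nondecreasing. Indeed, from~(3) and $c_i\geq 2$,
\[
\tilde p_i-\tilde p_{i-1}=(c_i-1)\tilde p_{i-1}-\tilde p_{i-2}\geq \tilde p_{i-1}-\tilde p_{i-2},
\]
and the base case $\tilde p_1-\tilde p_0=c_1-1\geq 1$ (since $c_1\geq 2$) starts the induction; the same argument applies verbatim to $\tilde q_i$, using $\tilde q_1-\tilde q_0=1$. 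This gives strict monotonicity of both sequences and completes the proof.
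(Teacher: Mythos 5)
Your proposal is correct and follows essentially the same route as the paper: the paper's proof simply says parts (1)--(3) go through as in the positive case (with the simplification that each factor now has determinant $+1$), and then gives exactly your induction for part (4), using $c_i\geq 2$ to show $\tilde p_i-\tilde p_{i-1}\geq \tilde p_{i-1}-\tilde p_{i-2}\geq\dots\geq 1$. Your parenthetical about $i=2$ is unnecessary (the recurrence at $i=2$ already works with $\tilde p_0=1$, $\tilde q_0=0$), but it does no harm.
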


\begin{proof}
These statements are proved similarly to the positive case. Moreover, the situation here is even simpler: matrices of the form $\begin{pmatrix}c & -1\\1 & 0\end{pmatrix}$ always have determinant 1.

The only slight difference is in the proof of part (4). This statement will again follow from the recurrence relation (3). We prove it by induction. The base case is obvious. Induction hypothesis: suppose we know that $\tilde{p}_{i-1} > \tilde{p}_{i-2}$. Write the expression for $\tilde{p}_i$ and use the inequality $c_i \geq 2$. Then we have
\[
\tilde{p}_i = c_i \tilde{p}_{i-1} - \tilde{p}_{i-2} \geq 2 \tilde{p}_{i-1} - \tilde{p}_{i-2} = \tilde{p}_{i-1} + (\tilde{p}_{i-1} - \tilde{p}_{i-2}) \geq \tilde{p}_{i-1},
\]
as required. Similarly for $\tilde{q}_i$.
\end{proof}

Here we see the connection between continued fraction expansions and friezes: the numerators and denominators of convergents transform according to the same recurrence rules as the elements of friezes. For example, they are obtained as ratios of continuants.

\begin{prop} Let $p/q = [[c_1, \dots, c_k]]$ be the expansion of a rational number into a negative continued fraction. Then the $i$-th convergent $\tilde{p}_i/\tilde{q}_i = [[c_1, \dots, c_i]]$ to $p/q$ equals the ratio of continuants
\[
\frac{\tilde{p}_i}{\tilde{q}_i} = \frac{V_i(c_1, \dots, c_i)}{V_{i-1}(c_1, \dots, c_{i-1})}, 
\]
and the fraction in the right-hand side is irreducible.
\end{prop}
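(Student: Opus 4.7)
The plan is to identify each side of the claimed equality as the unique sequence satisfying a specified linear recurrence with prescribed initial data, and then observe that Proposition~\ref{prop:negcontfrac}(3) and Proposition~\ref{prop:contrecurrence} produce matching data on both sides. Thus a single induction on $i$ settles the numerator and denominator equalities.

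For the numerators, Proposition~\ref{prop:negcontfrac}(3) gives $\tilde p_i = c_i\tilde p_{i-1} - \tilde p_{i-2}$ with $\tilde p_0 = 1$ and $\tilde p_1 = c_1$, while reading Proposition~\ref{prop:contrecurrence} at the tuple $(c_1,\dots,c_i)$ yields
\[
V_i(c_1,\dots,c_i) = c_i V_{i-1}(c_1,\dots,c_{i-1}) - V_{i-2}(c_1,\dots,c_{i-2}),
\]
with $V_0 = 1$ and $V_1(c_1) = c_1$. Matching recurrences with matching seed values immediately give $\tilde p_i = V_i(c_1,\dots,c_i)$ by induction on $i$. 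The same scheme handles the denominators: both $\tilde q_i$ and the appropriate sequence of continuants obey a three-term linear recurrence of the form $X_i = c_i X_{i-1} - X_{i-2}$ and share the seeds $X_0 = 0$, $X_1 = 1$, so a parallel induction does the job. The one spot that demands care is to make sure the list of arguments inside the continuant shifts in the correct direction at each step, so that the coefficient appearing in the recurrence is exactly $c_i$ at step $i$; this is a purely notational bookkeeping check, using Exercise~\ref{ex:continuant}(1) if needed to rewrite the argument list.

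For the irreducibility claim there are two clean routes. The fastest is simply to invoke Proposition~\ref{prop:negcontfrac}(2), which already asserts $\gcd(\tilde p_i,\tilde q_i) = 1$. A more intrinsic justification, in the spirit of the continuant language, is to apply Theorem~\ref{thm:unimod} to the tuple $(c_1,\dots,c_i)$: the resulting unimodularity identity exhibits an integer linear combination of the numerator and denominator continuants equal to $\pm 1$, which is a Bézout-type witness to coprimality. The only real ``obstacle'' is this indexing bookkeeping inside the denominator computation; the remainder of the argument is a mechanical induction that the two recurrences coincide.
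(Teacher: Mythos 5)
Your overall strategy --- match the three-term recurrences and the initial data on both sides and run a single induction --- is exactly what the paper intends (its proof is the one line ``proved in exactly the same way as for positive continued fractions''), and your treatment of the numerators is correct: $\tilde p_i$ and $V_i(c_1,\dots,c_i)$ both satisfy $X_i=c_iX_{i-1}-X_{i-2}$ with $X_0=1$, $X_1=c_1$. The gap is precisely in the step you set aside as ``purely notational bookkeeping.'' The sequence $W_i:=V_{i-1}(c_1,\dots,c_{i-1})$ does have the seeds $W_0=V_{-1}=0$, $W_1=V_0=1$, but Proposition~\ref{prop:contrecurrence} peels off the \emph{last} argument, so $W_i=c_{i-1}W_{i-1}-W_{i-2}$: the coefficient at step $i$ is $c_{i-1}$, not $c_i$, and the two recurrences do not match. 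The palindrome identity of Exercise~\ref{ex:continuant}(1) cannot repair this, since it merely reverses the argument list: $V_{i-1}(c_1,\dots,c_{i-1})$ is a polynomial in $c_1,\dots,c_{i-1}$ only, whereas $\tilde q_i$ genuinely depends on $c_2,\dots,c_i$ (already $\tilde q_2=c_2$). The identification you would be proving is in fact false as written: for $7/5=[[2,2,3]]$ one has $\tilde q_3=5$ while $V_2(2,2)=3$.

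The sequence that does satisfy your recurrence with seeds $0,1$ is $V_{i-1}(c_2,\dots,c_i)$ (apply Proposition~\ref{prop:contrecurrence} to the shortened list $(c_2,\dots,c_i)$; in the example, $V_2(2,3)=5$), so the correct conclusion is $\tilde q_i=V_{i-1}(c_2,\dots,c_i)$. With that denominator your induction closes, and your coprimality argument via Theorem~\ref{thm:unimod} applied to $(c_1,\dots,c_i)$ works verbatim: that identity reads $V_{i-1}(c_1,\dots,c_{i-1})V_{i-1}(c_2,\dots,c_i)-V_{i-2}(c_2,\dots,c_{i-1})V_i(c_1,\dots,c_i)=1$, an integer B\'ezout relation between the numerator and the corrected denominator. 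The displayed formula in the statement (and likewise the analogous corollary for positive continued fractions, where the denominator should be $K_{i-1}(a_2,\dots,a_i)$) has to be read with this correction; carrying out the bookkeeping you deferred is exactly what exposes it, so that check is the substantive part of the denominator argument rather than a formality.
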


This is proved in exactly the same way as for positive continued fractions.

\subsection{Relation between positive and negative continued fraction expansion matrices}

We have described two ways to expand a rational number $p/q$ into a continued fraction: the positive $p/q = [a_1, \dots, a_{2m}]$ and the negative $p/q = [[c_1, \dots, c_k]]$. For each of these expansions, we constructed a matrix: $M^+ = M^+(a_1, \dots, a_{2m})$ and $M = M(c_1, \dots, c_k)$, respectively.

\begin{example}
For $p/q = 7/5$, we obtain:
\[
M^+(1, 2, 1, 1) = \begin{pmatrix}7 & 4\\5 & 3\end{pmatrix},
\qquad
M(2, 2, 3) = \begin{pmatrix}7 & -3\\5 & -2\end{pmatrix}.
\]
\end{example}

A natural question arises: how are these matrices related? The connection turns out to be very simple: their first columns are identical and equal to the difference between the second columns of $M^+$ and $M$. In the matrix language, this is formulated as follows.

\begin{prop}\label{pr:mplusmr} Let $M^+$ and $M$ be the positive and negative continued fraction matrices for $p/q$, respectively. Then
\[
M^+ = M \cdot \begin{pmatrix}1 & 1\\0 & 1\end{pmatrix}.
\]
\end{prop}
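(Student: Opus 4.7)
The plan is to use a uniqueness argument in $SL_2(\ZZ)$, exploiting the fact that both sides already agree on the first column. First I would observe that both matrices are unimodular. Each factor $\begin{pmatrix} a & 1 \\ 1 & 0 \end{pmatrix}$ has determinant $-1$, but since our positive expansions are by convention of even length $2m$, we get $\det M^+ = (-1)^{2m} = 1$; the factors $\begin{pmatrix} c & -1 \\ 1 & 0 \end{pmatrix}$ each have determinant $+1$, so $\det M = 1$. By Propositions~\ref{prop:matrixpos} and~\ref{prop:matrixneg}, both $M^+$ and $M$ have the same first column $(p, q)^T$.

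Next I would set $X := M^{-1} M^+ \in SL_2(\ZZ)$ and compute
\[
X \begin{pmatrix} 1 \\ 0 \end{pmatrix} = M^{-1} M^+ \begin{pmatrix} 1 \\ 0 \end{pmatrix} = M^{-1} \begin{pmatrix} p \\ q \end{pmatrix} = M^{-1} M \begin{pmatrix} 1 \\ 0 \end{pmatrix} = \begin{pmatrix} 1 \\ 0 \end{pmatrix}.
\]
An element of $SL_2(\ZZ)$ fixing $(1,0)^T$ must be of the form $\begin{pmatrix} 1 & n \\ 0 & 1 \end{pmatrix}$ for some $n \in \ZZ$; so $M^+ = M \cdot \begin{pmatrix} 1 & n \\ 0 & 1 \end{pmatrix}$, and everything reduces to proving $n = 1$.

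Comparing second columns, where $r/s$ denotes the penultimate positive convergent and $\tilde{r}/\tilde{s}$ the penultimate negative convergent, yields $r = np - \tilde{r}$, i.e.\ $np = r + \tilde{r}$. By the strict monotonicity of the numerators of convergents (Propositions~\ref{prop:contfrac}(4) and~\ref{prop:negcontfrac}(4)), we have $1 \leq r = p_{2m-1} < p_{2m} = p$ and $1 \leq \tilde{r} = \tilde{p}_{k-1} < \tilde{p}_k = p$. Hence $0 < np = r + \tilde{r} < 2p$, which forces $n = 1$.

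The main obstacle I expect is bookkeeping for the degenerate endpoints of the convergent sequences: when $p/q$ is itself an integer, the negative expansion has length $k=1$ and the ``penultimate'' convergent is $\tilde{p}_0/\tilde{q}_0 = 1/0$, so $\tilde{s} = 0$; one needs to verify that the strict inequality $\tilde{r} \geq 1$ still holds (it does, since $\tilde{p}_0 = 1$) and that the second-row comparison $nq = s + \tilde{s}$ is consistent. A direct check handles this case. An alternative route would be to induct on a Euclidean step common to both expansions, but the uniqueness argument above avoids tracking the (rather intricate) way in which the two algorithms interleave.
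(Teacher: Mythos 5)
Your proposal is correct and is essentially the paper's own proof in different packaging: the paper also identifies the common first column $(p,q)^T$ via Propositions~\ref{prop:matrixpos} and~\ref{prop:matrixneg}, combines the two unimodularity relations $\tilde r q-\tilde s p=1$ and $rq-sp=-1$ to get $(r+\tilde r)q=(s+\tilde s)p$ (the same divisibility fact your $M^{-1}M^+=R^n$ step encodes), and then uses the strict bounds $r<p$, $\tilde r<p$ from the monotonicity of convergents to force the answer. Your stabilizer-of-$(1,0)^T$ phrasing and the endpoint check for $k=1$ are fine, so there is nothing to fix.
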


\begin{proof}
By Proposition~\ref{prop:contfrac}, the matrix $M^+$ equals $\begin{pmatrix}p & r\\q & s\end{pmatrix}$, where $r/s$ is the last positive convergent to $p/q$. Similarly, Proposition~\ref{prop:negcontfrac} states that $M = \begin{pmatrix}p & -\tilde{r}\\q & -\tilde{s}\end{pmatrix}$, where $\tilde{r}/\tilde{s}$ is the last negative convergent. Thus, the first columns of $M^+$ and $M$ are equal, and the required equality is equivalent to the statement that $r + \tilde{r} = p$ and $s + \tilde{s} = q$.

Let us prove these equalities. We use the unimodularity property for convergents. We know that
\[
\tilde{r} q - \tilde{s} p = 1.
\]
On the other hand,
\[
r q - s p = -1.
\]
The minus sign arises because the index of the last convergent is odd: by our definition, the number of components in the expansion of a number into a positive continued fraction is even. Thus,
\[
(r + \tilde{r}) q - (s + \tilde{s}) p = 0.
\]
But from the inequalities on the numerators and denominators of convergents, we know that $r < p$ and $\tilde{r} < p$, as well as $s < q$ and $\tilde{s} < q$. Therefore, $0 < r + \tilde{r} < 2p$ and $0 < s + \tilde{s} < 2q$. Hence, $r + \tilde{r} = p$ and $s + \tilde{s} = q$, as required.
\end{proof}


\section{Continued fraction matrices and the group $\SL_2(\ZZ)$}

\subsection{Relation between positive and negative continued fractions}

In the previous section, we established how the matrices of positive and negative continued fractions are related. This chapter answers a different question: suppose a fraction $p/q$ has both a ``positive'' expansion $[a_1,\dots,a_{2m}]$ and a ``negative'' expansion $[[c_1,\dots,c_k]]$. How are these sequences of numbers related? In other words, how can we reconstruct one sequence knowing the other? While the simplest approach would be to compute the fraction from one expansion and then expand it using the other method, is there a more direct connection? The following theorem provides the answer.

\begin{theorem}\label{thm:hirz} Let a rational number $p/q>1$ have positive and negative continued fraction expansions $[a_1,\dots,a_{2m}]$ and $[[c_1,\dots,c_k]]$ respectively. Then 
\[
(c_1,\dots,c_k)=(a_1+1,\underbrace{2,\dots,2}_{a_2-1},a_3+2,\underbrace{2,\dots,2}_{a_4-1},\dots,a_{2m-1}+2,\underbrace{2,\dots,2}_{a_{2m}-1}).
\]
\end{theorem}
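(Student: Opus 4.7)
The plan is to translate the theorem into a matrix identity and then establish it by a single local substitution applied iteratively. By Proposition~\ref{pr:mplusmr} the claim is equivalent to
$$M^+(a_1,\dots,a_{2m}) = M(c_1,\dots,c_k) \cdot \begin{pmatrix}1 & 1\\0 & 1\end{pmatrix},$$
with $(c_1,\dots,c_k)$ as stated. Writing $U = \begin{pmatrix}1 & 1\\0 & 1\end{pmatrix}$, I would first verify the explicit power formula
$$\begin{pmatrix}2 & -1\\1 & 0\end{pmatrix}^n = \begin{pmatrix}n+1 & -n\\n & -(n-1)\end{pmatrix}$$
by induction on $n \geq 0$, and then check the key local identity
$$\begin{pmatrix}a & 1\\1 & 0\end{pmatrix}\begin{pmatrix}b & 1\\1 & 0\end{pmatrix} = \begin{pmatrix}a+1 & -1\\1 & 0\end{pmatrix}\begin{pmatrix}2 & -1\\1 & 0\end{pmatrix}^{b-1} \cdot U$$
by directly multiplying both sides. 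Combined with the trivial observation $U\cdot\begin{pmatrix}a & 1\\1 & 0\end{pmatrix} = \begin{pmatrix}a+1 & 1\\1 & 0\end{pmatrix}$, this produces the ``shifted'' version
$$U \cdot \begin{pmatrix}a & 1\\1 & 0\end{pmatrix}\begin{pmatrix}b & 1\\1 & 0\end{pmatrix} = \begin{pmatrix}a+2 & -1\\1 & 0\end{pmatrix}\begin{pmatrix}2 & -1\\1 & 0\end{pmatrix}^{b-1} \cdot U.$$

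With these identities in hand, I would process the product $M^+(a_1,\dots,a_{2m}) = \prod_{i=1}^{2m}\begin{pmatrix}a_i & 1\\1 & 0\end{pmatrix}$ pair by pair from left to right. The first pair $(a_1,a_2)$ is rewritten by the local identity, producing the block $\begin{pmatrix}a_1+1 & -1\\1 & 0\end{pmatrix}\begin{pmatrix}2 & -1\\1 & 0\end{pmatrix}^{a_2-1}$ and leaving a floating $U$ immediately to its right. Each subsequent pair $(a_{2i-1},a_{2i})$ is then consumed together with the preceding $U$ via the shifted identity, producing the block $\begin{pmatrix}a_{2i-1}+2 & -1\\1 & 0\end{pmatrix}\begin{pmatrix}2 & -1\\1 & 0\end{pmatrix}^{a_{2i}-1}$ and regenerating a $U$ to carry forward. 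After $m$ such steps only a single trailing $U$ remains, and the accumulated product of ``negative'' blocks is exactly $M(c_1,\dots,c_k)$ for $(c_1,\dots,c_k)$ given by the formula in the theorem.

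The argument is essentially bookkeeping once the local identity is verified; there is no conceptual obstacle. The one point requiring attention is the distinction between the initial block (coefficient $a_1+1$, no preceding $U$ to absorb) and every subsequent block (coefficient $a_{2i-1}+2$, absorbing the carried $U$), which is exactly what produces the different constants ``$+1$'' and ``$+2$'' in the stated formula. Degenerate cases like $a_{2i}=1$ are handled automatically, since then $\begin{pmatrix}2 & -1\\1 & 0\end{pmatrix}^{0}=I$ simply contributes no $2$'s to the corresponding block of the negative expansion.
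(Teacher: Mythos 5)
Your proposal is correct, and I verified the power formula for $\begin{pmatrix}2 & -1\\1 & 0\end{pmatrix}^n$ and the local identity by direct multiplication; the telescoping with the carried factor $U=R$ then yields exactly $M^+(a_1,\dots,a_{2m})=M(c_1,\dots,c_k)\cdot R$, which is the same key identity the paper establishes before reading off the result from the first columns (using, as you implicitly do, that all $c_i\geq 2$ and that negative expansions are unique). The only real difference is presentational: the paper derives the local step from the generator relations $R^a=-M(a+1,1,1)$, $L^a=-M(1,2,\dots,2,1,1)$ and $M(1,1,1)=-I$ (Lemma~\ref{lem:rala}), whereas you verify the equivalent identity $\begin{pmatrix}a&1\\1&0\end{pmatrix}\begin{pmatrix}b&1\\1&0\end{pmatrix}=\begin{pmatrix}a+1&-1\\1&0\end{pmatrix}\begin{pmatrix}2&-1\\1&0\end{pmatrix}^{b-1}R$ by hand, which avoids the sign bookkeeping but is the same computation in substance.
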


We will prove this theorem in two ways. The first method, covered in this chapter, uses matrix computations. The second, more geometric approach is discussed in the next chapter. It is related to the action of the modular group on the hyperbolic plane.

\subsection{The group $\SL_2(\ZZ)$ and its generators}\label{ssec:sl2z}

Before proceeding with the first proof, we need to introduce some notation. By $\SL_2(\ZZ)$ we will denote the group of $2\times 2$ integer matrices with determinant 1. We have seen that continued fraction matrices $M(c_1,\dots,c_k)$ and $M^+(a_1,\dots,a_{2m})$ belong to this group.

Let $R$, $L$, and $S$ denote the following elements of $\SL_2(\ZZ)$:
\[
R=\begin{pmatrix}1 & 1\\ 0 &1
\end{pmatrix},\qquad
L=\begin{pmatrix}1 & 0\\ 1 &1
\end{pmatrix},\qquad
S=\begin{pmatrix}0 & -1\\ 1 &0
\end{pmatrix}.
\]

It can be shown (try this yourself or read in Chapter VII of J.-P. Serre's ``A Course in Arithmetic'' \cite{Serre70}) that these three matrices generate $\SL_2(\ZZ)$. In fact, any two of them suffice: each of these three matrices can be expressed in terms of the other two.

\begin{exercise} Show that $L=S^{-1}R^{-1}S$. Find expressions for $R$ in terms of $S$ and $L$, and for $S$ in terms of $R$ and $L$.
\end{exercise}

The element $S$ has order 4 (i.e., $S^4=I$), while $R$ and $L$ have infinite order. Indeed, their powers can be computed explicitly: $R^a=\begin{pmatrix}1 &a\\0&1\end{pmatrix}$ and $L^a=\begin{pmatrix}1 &0\\a&1\end{pmatrix}$ for any $a\in\ZZ$.

Continued fraction matrices can be expressed through these generators.

\begin{prop}
The matrices $M(c_1,\dots,c_k)$ and $M^+(a_1,\dots,a_{2m})$ admit the following decompositions:
\begin{equation}\label{eq:mplusgen}
M^+(a_1,\dots,a_{2m})=R^{a_1}L^{a_2}R^{a_3}L^{a_4}\dots R^{a_{2m-1}}L^{a_{2m}}
\end{equation}
and
\begin{equation}\label{eq:mgen}
M(c_1,\dots,c_k)=R^{c_1}S R^{c_2}S\dots R^{c_k}S.
\end{equation}
\end{prop}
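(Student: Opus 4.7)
The plan is to verify both decompositions directly by computing the building blocks, then assembling the product.

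For the negative continued fraction matrix the verification is factor by factor. Using the explicit form $R^{c}=\begin{pmatrix}1 & c\\ 0 & 1\end{pmatrix}$ together with $S=\begin{pmatrix}0 & -1\\ 1 & 0\end{pmatrix}$, I would compute
\[
R^{c}S=\begin{pmatrix}1 & c\\ 0 & 1\end{pmatrix}\begin{pmatrix}0 & -1\\ 1 & 0\end{pmatrix}=\begin{pmatrix}c & -1\\ 1 & 0\end{pmatrix},
\]
which is exactly the $i$-th factor in the defining product of $M(c_1,\dots,c_k)$. Substituting this identity into each of the $k$ factors immediately gives decomposition~(\ref{eq:mgen}); no further work is required.

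For the positive continued fraction matrix the situation is slightly different, because the individual factor $\begin{pmatrix}a_i & 1\\ 1 & 0\end{pmatrix}$ has determinant $-1$ and so cannot equal any single power of $R$ or $L$. Instead, I would pair consecutive factors and check the identity
\[
\begin{pmatrix}a & 1\\ 1 & 0\end{pmatrix}\begin{pmatrix}b & 1\\ 1 & 0\end{pmatrix}=\begin{pmatrix}ab+1 & a\\ b & 1\end{pmatrix}=\begin{pmatrix}1 & a\\ 0 & 1\end{pmatrix}\begin{pmatrix}1 & 0\\ b & 1\end{pmatrix}=R^{a}L^{b}.
\]
Applying this with $(a,b)=(a_{2i-1},a_{2i})$ to each of the $m$ consecutive pairs in $M^{+}(a_1,\dots,a_{2m})$ produces the alternating word $R^{a_1}L^{a_2}R^{a_3}L^{a_4}\cdots R^{a_{2m-1}}L^{a_{2m}}$, which is exactly~(\ref{eq:mplusgen}).

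There is no serious obstacle; the only subtlety worth a remark is that the pairing in the positive case relies essentially on the length of the expansion being even. This is precisely the normalization convention adopted earlier when defining positive continued fractions, so it is automatic. Everything else is a $2\times 2$ matrix multiplication.
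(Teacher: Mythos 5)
Your proposal is correct and follows essentially the same route as the paper: both arguments reduce to the single-factor identity $R^{c}S=\left(\begin{smallmatrix}c & -1\\ 1 & 0\end{smallmatrix}\right)$ for the negative case and to the paired identity $\left(\begin{smallmatrix}a & 1\\ 1 & 0\end{smallmatrix}\right)\left(\begin{smallmatrix}b & 1\\ 1 & 0\end{smallmatrix}\right)=R^{a}L^{b}$ for the positive case (the paper obtains the latter by inserting and cancelling the swap matrix $\left(\begin{smallmatrix}0 & 1\\ 1 & 0\end{smallmatrix}\right)$ between consecutive factors, whereas you multiply the pair out directly, a purely cosmetic difference). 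Your closing remark that the pairing depends on the even-length normalization of the positive expansion is a worthwhile observation the paper leaves implicit.
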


\begin{proof} It is easy to see that
\[
\begin{pmatrix}a_i &1\\1 &0\end{pmatrix}=R^{a_i}\begin{pmatrix}0 &1\\1 &0\end{pmatrix}\quad\text{and}\quad
\begin{pmatrix}a_{i+1} &1\\1 &0\end{pmatrix}=\begin{pmatrix}0 &1\\1 &0\end{pmatrix}L^{a_{i+1}}.
\]
Therefore,
\[
\begin{pmatrix}a_i &1\\1 &0\end{pmatrix}
\begin{pmatrix}a_{i+1} &1\\1 &0\end{pmatrix}=R^{a_i}L^{a_{i+1}},
\]
which yields formula (\ref{eq:mplusgen}). Formula (\ref{eq:mgen}) is even simpler, it follows from the equality
\[
\begin{pmatrix}c_i &-1\\1 &0\end{pmatrix}=R^{c_i}S.
\]
\end{proof}

This leads to the following proposition.

\begin{prop}\label{pr:hirzlite}
The following equality holds:
\begin{equation}\label{eq:hirzlite}
M^+(a_1,\dots,a_{2m})=-M(a_1+1,\underbrace{2,\dots,2}_{a_2-1},a_3+2,\underbrace{2,\dots,2}_{a_4-1},\dots,a_{2m-1}+2,\underbrace{2,\dots,2}_{a_{2m}},1,1).
\end{equation}
\end{prop}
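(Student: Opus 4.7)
The plan is to expand both sides as words in the generators $R$ and $S$ via the decompositions \eqref{eq:mplusgen} and \eqref{eq:mgen}, and then transform one word into the other using two elementary identities in $\SL_2(\ZZ)$.

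First, I would establish an auxiliary identity converting powers of $L$ into products of $R$ and $S$:
\[
L^a = -R\,S\,(R^2 S)^a (RS)^2 \qquad \text{for every integer } a \geq 1.
\]
This is a routine $2\times 2$ matrix calculation. One checks by an easy induction that $(R^2S)^a = \bigl(\begin{smallmatrix}a+1 & -a\\ a & 1-a\end{smallmatrix}\bigr)$, and then multiplies out the right-hand side to recover $\bigl(\begin{smallmatrix}1 & 0\\ a & 1\end{smallmatrix}\bigr) = L^a$. Substituting this identity into each factor $L^{a_{2i}}$ of $M^+(a_1,\dots,a_{2m}) = R^{a_1}L^{a_2}R^{a_3}L^{a_4}\cdots R^{a_{2m-1}}L^{a_{2m}}$ and collecting the $m$ resulting signs yields
\[
M^+ = (-1)^m\, R^{a_1+1}S(R^2S)^{a_2}(RS)^2\,R^{a_3+1}S(R^2S)^{a_4}(RS)^2\cdots R^{a_{2m-1}+1}S(R^2S)^{a_{2m}}(RS)^2.
\]

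Next, I would verify the collapsing identity $(R^2S)(RS)^2 = -R$ by a one-line matrix check. Applying this at each of the $m-1$ internal junctions between consecutive blocks, I rewrite
\[
(R^2S)^{a_{2i}}(RS)^2\, R^{a_{2i+1}+1} = (R^2S)^{a_{2i}-1}\cdot (R^2S)(RS)^2\cdot R^{a_{2i+1}+1} = -(R^2S)^{a_{2i}-1}\, R^{a_{2i+1}+2}
\]
for $i = 1,\dots,m-1$, which introduces an additional $(-1)^{m-1}$. The accumulated overall sign is $(-1)^m(-1)^{m-1} = -1$, and the remaining word is precisely
\[
R^{a_1+1}S(R^2S)^{a_2-1}R^{a_3+2}S(R^2S)^{a_4-1}\cdots R^{a_{2m-1}+2}S(R^2S)^{a_{2m}}\,RS\,RS,
\]
which, by \eqref{eq:mgen}, equals $M\bigl(a_1+1,\,\underbrace{2,\dots,2}_{a_2-1},\,a_3+2,\,\underbrace{2,\dots,2}_{a_4-1},\,\dots,\,a_{2m-1}+2,\,\underbrace{2,\dots,2}_{a_{2m}},\,1,\,1\bigr)$, completing the proof.

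The two key identities $L^a = -RS(R^2S)^a(RS)^2$ and $(R^2S)(RS)^2 = -R$ are both short matrix calculations; the only subtle part of the argument is the sign bookkeeping, where the single minus sign appearing in the statement must come out precisely as the product of the $m$ signs contributed by substituting $L^a$ and the $m-1$ signs contributed by the collapsing step. No induction on $m$ is required once the two identities are in hand.
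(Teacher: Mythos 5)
Your proof is correct and takes essentially the same route as the paper: your identity $L^a=-RS(R^2S)^a(RS)^2$ is exactly Lemma~\ref{lem:rala} in the form $L^a=-M(1,\underbrace{2,\dots,2}_{a},1,1)$ transcribed into a word in the generators, and your collapsing identity $(R^2S)(RS)^2=-R$ is the paper's $M(2,1,1)=-R$, which gives the junction relation $M(2,1,1,a+1)=-M(a+2)$. The only difference is that you carry out the sign bookkeeping $(-1)^m(-1)^{m-1}=-1$ explicitly, which the paper leaves implicit.
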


Note that if we had already proved Theorem~\ref{thm:hirz}, equality (\ref{eq:hirzlite}) could be derived from Proposition~\ref{pr:mplusmr}, which states that $M^+=MR$. However, we will proceed differently: first prove (\ref{eq:hirzlite}), then derive Theorem~\ref{thm:hirz} from it.

We will need the following lemma.

\begin{lemma}\label{lem:rala}
The equalities $R^a=-M(a+1,1,1)$ and $L^a=-M(1,\underbrace{2,\dots,2}_{a},1,1)$ hold.
\end{lemma}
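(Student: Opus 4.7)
The plan is to factor out the matrix $N := M(1) = \begin{pmatrix} 1 & -1 \\ 1 & 0 \end{pmatrix}$, which satisfies $N^3 = -I$ by a direct check, equivalently $N^2 = -N^{-1}$. Both identities in the lemma end with a block $M(1,1) = N^2$ on the right, so this relation will immediately convert each trailing $N^2$ into $-N^{-1}$ and reduce everything to a clean statement about one or two small matrices.

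For the first identity I would write
\[
M(a+1,1,1) \;=\; M(a+1)\cdot N^{2} \;=\; -\,M(a+1)\,N^{-1},
\]
and then verify by one $2\times 2$ multiplication that $M(a+1)\,N^{-1} = R^a$, which gives $M(a+1,1,1) = -R^a$ as claimed.

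For the second identity I would first note that $M(\underbrace{2,\dots,2}_{a}) = M(2)^{a}$, so that
\[
M\bigl(1, \underbrace{2,\dots,2}_{a}, 1, 1\bigr) \;=\; N\cdot M(2)^{a}\cdot N^{2} \;=\; -\,N\,M(2)^{a}\,N^{-1}.
\]
It thus suffices to establish $L^a = N\,M(2)^{a}\,N^{-1}$. Since both sides are $a$-th powers, this reduces to the single conjugacy $L = N\,M(2)\,N^{-1}$, which is again one $2\times 2$ computation.

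I do not anticipate any genuine obstacle: the whole proof rests on the identity $N^3 = -I$ and the conjugacy $N\,M(2)\,N^{-1} = L$, both of which are one-line calculations. If one preferred a more pedestrian route, the second identity can instead be handled by establishing by induction that $M(\underbrace{2,\dots,2}_{a}) = \begin{pmatrix} a+1 & -a \\ a & -(a-1) \end{pmatrix}$ and then multiplying out $N \cdot M(2)^{a} \cdot N^{2}$ directly, but the conjugation argument is cleaner and makes transparent why both formulas have exactly the same trailing block $(\dots,1,1)$.
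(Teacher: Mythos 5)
Your proposal is correct and follows essentially the same route as the paper: the paper's proof of the second identity rests on exactly the same two facts, namely $M(1,1,1)=-I$ (i.e.\ $N^{-1}=-N^2$ for $N=M(1)$) and the conjugacy $L=N\,M(2)\,N^{-1}$, which it then raises to the $a$-th power. The only cosmetic difference is that the paper verifies the first identity by direct computation rather than via the same $N^2=-N^{-1}$ reduction, which changes nothing of substance.
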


\begin{proof}
The first equality is easy to verify directly.

For the second equality, first note that $M(1,1,1)=-I$, i.e., $\begin{pmatrix}1&-1\\1&0\end{pmatrix}^{-1}=-\begin{pmatrix}1&-1\\1&0\end{pmatrix}^2$. Next, it is easy to verify directly that $L=\begin{pmatrix}1&-1\\1&0\end{pmatrix}\begin{pmatrix}2&-1\\1&0\end{pmatrix}\begin{pmatrix}1&-1\\1&0\end{pmatrix}^{-1}$. Thus, $L^a$ is obtained from $M(2)^a=M(2,\dots,2)$ by conjugation by the same matrix, as required.
\end{proof}

\begin{proof}[Proof of Proposition~\ref{pr:hirzlite}]
Since $M(1,1,1)=-I$, Lemma~\ref{lem:rala} implies that
\[
R^{a_i}L^{a_{i+1}}=-M(a_i+1,\underbrace{2,\dots,2}_{a_{i+1}},1,1).
\]
Thus, the proposition follows from relation (\ref{eq:mplusgen}) and the simple equality $M(2,1,1,a+1)=-M(a+2)$ (we leave its verification as an exercise). The proposition is proved.
\end{proof}

Finally, note that the last three coefficients in (\ref{eq:hirzlite}) are $(2,1,1)$, so they can be eliminated using the relation $M(2,1,1)=-R$. Thus we obtain:
\begin{equation}
M^+(a_1,\dots,a_{2m})=M(a_1+1,\underbrace{2,\dots,2}_{a_2-1},a_3+2,\underbrace{2,\dots,2}_{a_4-1},\dots,a_{2m-1}+2,\underbrace{2,\dots,2}_{a_{2m}-1})R.
\end{equation}

This equality shows that the first columns of matrices $M^+(a_1,\dots,a_{2m})$ in the left-hand side and $M(a_1+1,\dots)$ in the right-hand side are equal. Therefore, they correspond to expansions of the same number. This proves Theorem~\ref{thm:hirz}.

\subsection{Continued fractions and triangulations}\label{ss:conttriang}

At first glance the connection between coefficients of positive and negative continued fraction expansions described in Theorem~\ref{thm:hirz} appears complex. However, it has a natural interpretation in terms of polygon triangulations. In this section, we present the construction itself, and in the next chapter, we describe its relation to friezes and the  matrices from $\SL_2(\ZZ)$  encountered earlier.

Start with an arbitrary sequence of positive integers $(a_1,\dots,a_{2m})$. Consider two horizontal parallel lines. Draw a triangulation of a polygon whose vertices lie on these lines, with two types of triangles: those pointing upward (with two vertices on the lower line and one on the upper) and pointing downward (with two vertices above and one below). Begin with a segment connecting the lines; let its upper endpoint be the first vertex of our polygon and the lower endpoint the last one. Attach $a_1$ upward-pointing triangles sharing a common vertex to the right of this segment. Then attach $a_2$ downward-pointing triangles to their right, with their shared vertex coinciding with the right vertex of the last triangle from the previous step. Continue adding $a_3$ upward, $a_4$ downward, and so on. Since the total number of such groups is $2m$, that is, even, the last triangle will point downward (see Fig.~\ref{fig:bamboo}). This marks $a_2+a_4+\dots+a_{2m}+1$ points on the upper line and $a_1+a_3+\dots+a_{2m-1}+1$ points on the lower line.

\begin{figure}[h!]
\includegraphics{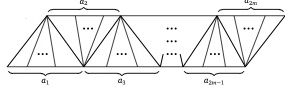}
\caption{Triangulation corresponding to the sequence $(a_1,\dots,a_{2m})$.}\label{fig:bamboo}
\end{figure}

Let $n=a_1+a_2+\dots+a_{2m}+2$. We obtain a triangulation of an $n$-gon of a special kind: it has exactly two ``ears'',  meaning its dual tree (see the proof of Proposition~\ref{prop:algtriang}) has no vertices of degree three. Such a tree (without branching) is called a \emph{bamboo}.

Let $k$ be the number of downward-pointing triangles, i.e., $k=a_2+a_4+\dots+a_{2m}$. The ``ears'' of our triangulation will be at vertices~$k+1$ and~$n$.

The quiddity $(c_1,\dots,c_n)$ of this triangulation can be easily written. Here, $c_1=a_1+1$: the first vertex meets $a_1$ upward and one downward triangle. Next come $a_2-1$ twos: a vertex between two downward triangles meets two triangles. The next vertex has quiddity $a_3+2$ (meeting two downward and $a_3$ upward triangles), and so on until vertex $k+1$ with quiddity~1. The pattern then mirrors: $c_{k+2}=a_{2m}+1$, followed by~$a_{2m-1}-1$ twos, until~$c_n=1$.

\begin{example}\label{ex:triang75} The sequence $[1,2,1,1]$ corresponds to this triangulation of a heptagon:
\[
\xymatrix{
& \bullet\ar@{-}[ddl]\ar@{-}[ddr]\ar@{-}[r] &\bullet \ar@{-}[dd]\ar@{-}[r] &\bullet \ar@{-}[rr]\ar@{-}[ddl]\ar@{-}[ddr] &&\bullet\ar@{-}[ddl]\\
\\
\bullet \ar@{-}[rr]&& \bullet \ar@{-}[rr]&&\bullet\\
}
\]
Its quiddity is $(2,2,3,1,2,4,1)$. 
\end{example}

Thus, the expression in Theorem~\ref{thm:hirz} gives the quiddities of such a triangulation's vertices, but only a ``half'' of them: it includes quiddities of vertices between the two ``ears''. The following exercise shows that this sequence uniquely determines the triangulation.

\begin{exercise}
Given a triangulation of an $n$-gon with two ``ears'' and quiddity $(c_1,\dots,c_n)$, where $c_{k+1}=c_n=1$, show that it is possible to reconstruct $c_1,\dots,c_k$ from $c_{k+2},\dots,c_{n-1}$, and vice versa.
\end{exercise}

Moreover, such a triangulation can be used to reconstruct both the continued fraction and all its convergents, both positive and negative. For this, we define the \emph{Farey\footnote{John Farey Sr. (1766--1826) was an English geologist and polymath. He authored articles on numerous topics ranging from gardening and meteorology to mathematics and music. The notion of Farey sum was introduced during his research on the mathematical theory of music.} sum} operation on fractions.

\begin{definition} For two non-negative irreducible fractions $p/q$ and $r/s$, their \emph{Farey sum} or \emph{mediant} is
\[
\frac{p}{q}\oplus \frac{r}{s}=\frac{p+r}{q+s}.
\]
\end{definition}

The Farey sum is thus the result of ``naive addition of fractions'': that is, adding numerator to numerator and denominator to denominator.

\begin{exercise}
Verify that the Farey sum of two fractions lies between them: if $p/q<r/s$, then $p/q<(p+r)/(q+s)<r/s$.
\end{exercise}

Additionally, we extend the set of non-negative rational numbers with $\infty=1/0$. Clearly, this element can be added in the sense of Farey to others: $p/q\oplus \infty=(p+1)/q$.

Now we describe a construction that computes the continued fraction from the triangulation described above (i.e., from the sequence $(a_1,\dots,a_{2m})$). Place $0=0/1$ at the bottom-left vertex and $\infty=1/0$ at the top-left. Then fill the remaining vertices left to right as follows: if there are fractions in two vertices of a triangle, place their Farey sum in the third vertex. In Example~\ref{ex:triang75}, this gives the following picture:

\[
\label{pic:triang75}
\xymatrix{
& \dfrac 10 \ar@{-}[ddl]\ar@{-}[ddr]\ar@{-}[r] &\dfrac 21 \ar@{-}[dd]\ar@{-}[r] &\dfrac 32 \ar@{-}[rr]\ar@{-}[ddl]\ar@{-}[ddr] &&\dfrac 75 \ar@{-}[ddl]\\
\\
\dfrac 01 \ar@{-}[rr]&& \dfrac 11 \ar@{-}[rr]&&\dfrac 43\\
}
\]

\begin{theorem}\label{thm:triang} Consider the triangulation of an $n$-gon constructed from sequence $(a_1,\dots,a_{2m})$. Let $(c_1,\dots,c_n)$ be its quiddity, where $c_k=c_n=1$. Fill its vertices  with fractions $p_i/q_i$ as stated above. Then we have
\begin{enumerate}
\item $p_{k+1}/q_{k+1}=[a_1,\dots,a_{2m}]=[[c_1,\dots,c_k]]$;

\item $p_i/q_i=[[c_1,\dots,c_{i-1}]]$; i.e., the upper side contains negative convergents to $p_{k+1}/q_{k+1}$;

\item Consider the ``zigzag'' path connecting the bottom-left to top-right vertex via segments between upper and lower sides (shown in Fig.~\ref{fig:bamboo} by bold segments). The sequence of positive convergents lies along this zigzag.
\end{enumerate}
\end{theorem}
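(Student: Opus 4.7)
The plan is to prove all three claims together by analysing the Farey labelling rule in terms of two coupled vector recurrences: one matching the positive continued fraction recurrence (yielding claim (3) directly, and half of claim (1)), and one matching the negative continued fraction recurrence (yielding claim (2) and the other half of claim (1)).

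Set up notation as follows. Write the top vertices left-to-right as $T_1=1/0,T_2,\dots,T_{k+1}$ and the bottom vertices as $B_1=0/1,B_2,\dots$, viewing each as a column vector $(p,q)^T\in\ZZ^2$. Let $P_j:=T_{K_j}$ be the rightmost top vertex after the $j$-th downward fan (with $K_j=1+a_2+\dots+a_{2j}$) and $Q_j$ the shared bottom of the $j$-th downward fan (equivalently, the rightmost bottom after the $j$-th upward fan); put $P_0=T_1$ and $Q_0=B_1$. Since each upward/downward fan is built by iteratively taking the Farey sum against a fixed apex, one reads off the coupled recurrence
\[
Q_j = Q_{j-1} + a_{2j-1} P_{j-1},\qquad P_j = P_{j-1} + a_{2j} Q_j.
\]
Interleaving as $X_{-1}=Q_0,\ X_0=P_0,\ X_{2j-1}=Q_j,\ X_{2j}=P_j$ collapses this into the single recurrence $X_i=X_{i-2}+a_i X_{i-1}$, which is precisely the recurrence for positive convergents from Proposition~\ref{prop:contfrac}(3) with the right initial data. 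Hence $X_i$ is the column vector of the $i$-th positive convergent $p_i/q_i=[a_1,\dots,a_i]$. Verifying that each consecutive pair $(X_{i-1},X_i)$ is an edge of the triangulation (a side of the last triangle of the corresponding fan, or a boundary edge at the ends) proves claim (3); in particular $T_{k+1}=P_m=X_{2m}=[a_1,\dots,a_{2m}]$, which is half of claim (1).

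For claim (2) I will show by induction on $i$ that the top labels satisfy $T_{i+1}=c_iT_i-T_{i-1}$ with initial conditions $T_0:=(0,-1)^T$, $T_1=(1,0)^T$; these match the vectors $(\tilde p_{-1},\tilde q_{-1})=(0,-1)$ and $(\tilde p_0,\tilde q_0)=(1,0)$ obtained by running the negative convergent recurrence from Proposition~\ref{prop:negcontfrac}(3) backwards one step. Inside the interior of the $j$-th downward fan one has $Q_j=T_i-T_{i-1}$, so $T_{i+1}=T_i+Q_j=2T_i-T_{i-1}$, matching the interior quiddity $c_i=2$ from Section~\ref{ss:conttriang}. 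At the transition $T_{K_j}\to T_{K_j+1}$ between fans, combining $T_{K_j+1}=T_{K_j}+Q_{j+1}$ with $Q_{j+1}=Q_j+a_{2j+1}T_{K_j}$ and $Q_j=T_{K_j}-T_{K_j-1}$ gives $T_{K_j+1}=(a_{2j+1}+2)T_{K_j}-T_{K_j-1}$, matching the fan-boundary quiddity $c_{K_j}=a_{2j+1}+2$. The first step $T_2=(a_1+1)T_1+B_1=c_1T_1-T_0$ closes the recurrence at the left boundary, with $c_1=a_1+1$. Thus $T_i=\tilde p_{i-1}/\tilde q_{i-1}=[[c_1,\dots,c_{i-1}]]$, which is claim (2); specialising $i=k+1$ supplies the second half of claim (1) and incidentally reproves Theorem~\ref{thm:hirz} from a purely geometric angle.

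The main obstacle is indexing rather than any hidden identity: one has to juggle two indexings simultaneously (polygon vertex $i$ versus convergent index $i-1$), cleanly separate interior-fan positions (quiddity $2$) from fan-boundary positions (quiddity $a_{2j-1}+2$, or $a_1+1$ at the left end, or $1$ at the ear), and justify the somewhat artificial conventions $T_0=(0,-1)^T$ and $X_{-1}=(0,1)^T$ as the correct ``$(-1)$-st'' terms for the two recurrences. Once the bookkeeping is in place, each instance of the recurrence corresponds to a single Farey sum, so the verification itself is routine.
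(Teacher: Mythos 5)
Your proposal is correct, and it proves all three claims, but it takes a genuinely different route from the paper. The paper defers the proof to Chapter~7 and argues via the hyperbolic plane: the Stern--Brocot tree and Corollary~\ref{cor:stern} identify the word $R^{a_1}L^{a_2}\cdots R^{a_{2m-1}}L^{a_{2m}}$ with the zigzag of positive convergents, and the rotation operator of Proposition~\ref{prop:turn} shows that $M(c_1,\dots,c_{i-1})$ carries $\infty$ to the $i$-th vertex, giving $p_i/q_i=[[c_1,\dots,c_{i-1}]]$ via the transitive $\PSL_2(\ZZ)$-action on oriented edges of the Farey graph. You instead work entirely inside the picture of fans: each upward (resp.\ downward) fan with apex $P_{j-1}$ (resp.\ $Q_j$) turns the Farey-sum rule into the coupled recurrences $Q_j=Q_{j-1}+a_{2j-1}P_{j-1}$, $P_j=P_{j-1}+a_{2j}Q_j$, whose interleaving is exactly Proposition~\ref{prop:contfrac}(3), while the top row satisfies $T_{i+1}=c_iT_i-T_{i-1}$, which is Proposition~\ref{prop:negcontfrac}(3); I checked the three cases of your induction (interior quiddity $2$, fan boundary $a_{2j+1}+2$, left end $a_1+1$ with $T_0=(0,-1)^T$) and they all close correctly. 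What your approach buys is elementarity and transparency -- no modular group, no hyperbolic plane, and the mechanism ``one Farey sum $=$ one step of a convergent recurrence'' is laid bare, with Theorem~\ref{thm:hirz} falling out as a by-product; what the paper's approach buys is reuse of the Stern--Brocot and rotation machinery that Chapters~7--8 need anyway, and the conceptual placement of the result inside the $\PSL_2(\ZZ)$-action. At bottom the two arguments encode the same matrix computation ($R^a$, $L^a$ acting on column pairs versus your interleaved recurrence; $M(c_1,\dots,c_k)$ acting on $(\infty,0)$ versus your three-term recursion on the $T_i$), but yours is self-contained and could be inserted right after Section~\ref{ss:conttriang}. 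One small point of bookkeeping: the theorem as stated writes $c_k=c_n=1$, whereas the construction in Section~\ref{ss:conttriang} puts the ears at vertices $k+1$ and $n$; you silently use the correct indexing ($T_{k+1}$ is the top-right ear), which is the intended reading.
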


We will prove this theorem in the next section using an embedding of the triangulation into the hyperbolic plane and studying the action of continued fraction matrices. Note that part (1) essentially constitutes Theorem~\ref{thm:hirz}, providing an alternative proof of it.

\begin{example} In Example~\ref{ex:triang75}, the positive convergents to $7/5$ are $1/1$, $3/2$, $4/3$, and $7/5$. The negative convergents are $2/1$, $3/2$, and $7/5$.
\end{example}


\section{Triangulations and the Farey graph}

In this section, we will establish a connection between friezes and the group $\PSL_2(\ZZ)$. Our exposition closely follows the paper~\cite{Farey1}, which we recommend to interested readers. We are also grateful to its authors for permission to use the figures below.

\subsection{The Farey graph}

Consider the set of rational numbers $\QQ$, each represented as an irreducible fraction $\dfrac ab$, and add the element $\dfrac 10$ (``infinity''). We obtain the set $\overline\QQ=\QQ\cup\{\infty\}$.

Define the \emph{Farey graph} as an infinite graph whose vertices are elements of $\overline\QQ$. Suppose that two vertices $\dfrac ab$ and $\dfrac cd$ are connected by an edge if and only if $ad-bc=\pm 1$. This is equivalent to $\det\begin{pmatrix}a &c\\b &d\end{pmatrix}=\pm 1$.

If such vertices are connected by an edge, then the matrices $\begin{pmatrix}a &a+c\\b &b+d\end{pmatrix}$ and $\begin{pmatrix}c &a+c\\d &b+d\end{pmatrix}$ also have determinant $\pm 1$, which means that both vertices $\dfrac ab$ and $\dfrac cd$ are connected to the fraction $\dfrac{a+c}{b+d}$, i.e., to their Farey sum.

\begin{exercise} Verify that this fully describes all triangles: every triangle (a triple of pairwise connected vertices) in the Farey graph has the form $\left(\dfrac ab,\dfrac cd, \dfrac{a+c}{b+d}\right)$.
\end{exercise}

\begin{exercise} Verify that if $\dfrac ab$ and $\dfrac cd$ are irreducible fractions with $ad-bc=1$, then their Farey sum $\dfrac{a+c}{b+d}$ is also irreducible.
\end{exercise}

It is convenient to draw the Farey graph on the hyperbolic plane. We will use the upper half-plane model for this; let us briefly recall the main notions related to it.

Let $\HH=\{z\in\CC\mid \Im z>0\}$ be the set of complex numbers with positive imaginary part. We call it the \emph{hyperbolic plane}. The real line, extended with the point $\infty$, is called the \emph{absolute}: it can be thought of as the set of ``points at infinity'' (or ``asymptotic directions") of the hyperbolic plane. The \emph{lines} in $\HH$ are semicircles with centers on the absolute (i.e., perpendicular to the real line), and vertical half-lines starting on the absolute.

We place the vertices of the Farey graph on the absolute and draw each edge as a line (not a segment!) in $\HH$. As we will show in Corollary~\ref{cor:planar}, the graph drawn this way on the upper half-plane is ``planar'': its edges intersect only at vertices.

We will focus not on the entire Farey graph but on its subgraph whose vertices are non-negative rational numbers and $\infty=\dfrac 10$. Additionally, we will depict the hyperbolic plane in the upper half-plane in an unusual way: place the point $\infty$ in the finite part of the absolute and send some negative number to infinity. For example, we can apply the transformation $z\mapsto\dfrac{z}{z+1}$ to the standard upper half-plane model. This maps $0$ to $0$, $1$ to $\dfrac 12$, and infinity to $1$. Thus, the positive half-axis is mapped to the interval $(0,1)$ on the real axis, and the half-plane $\mathrm{Re}\,z>0$ maps to the interior of a semicircle with this interval as its diameter. All edges of the Farey graph connecting positive points on the absolute will be drawn as semicircles inside this semicircle. The subgraph formed by all positive vertices (and $0$ and $\infty$) and their connecting edges is called the \emph{positive Farey graph}. A fragment of it is shown in Fig.~\ref{fig:posfarey}.

\begin{figure}[h!]
\includegraphics{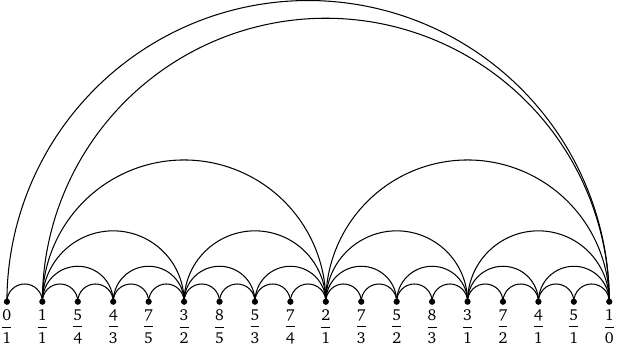}
\caption{A fragment of the positive Farey graph}\label{fig:posfarey}
\end{figure}

For completeness, note that the entire Farey graph can also be nicely depicted in a finite region by applying the transformation $z \mapsto i \dfrac{z-i}{z+i}$. This maps the absolute (the real line) to the unit circle and the upper half-plane to the unit disk. Here, $\infty$ is mapped to $i$, $0$ to $-i$, and the points $-1$ and $1$ remain fixed. The arcs perpendicular to the absolute (generalized circles) map to arcs perpendicular to the absolute, resulting in the picture shown in Fig.~\ref{fig:fareyfull}.

\begin{figure}[h!]
\includegraphics[scale=0.7]{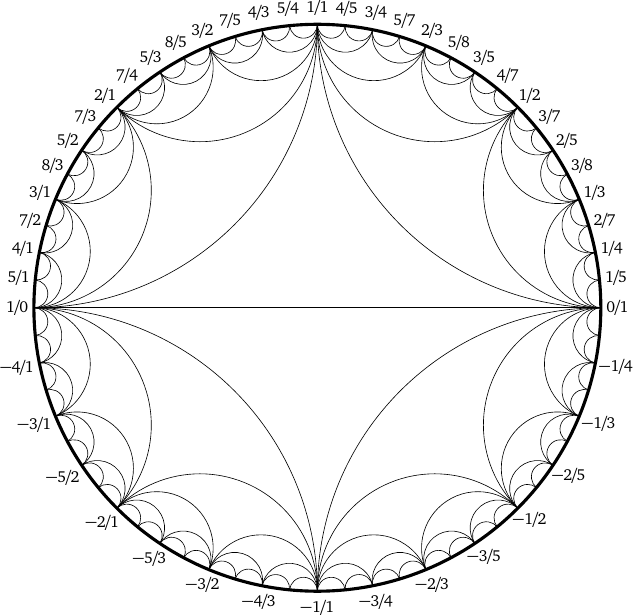}
\caption{The Farey graph in the disk}\label{fig:fareyfull}
\end{figure}

\subsection{Embedding triangulations into the positive Farey graph}

Triangulations of polygons can be embedded into the Farey graph drawn on the plane with the vertices of the polygon lying on the absolute. Consider a triangulation of an $n$-gon and mark one of its sides; assume this side connects the $n$-th and first vertices. Write $0=\dfrac 01$ and $\infty=\dfrac 10$ at its endpoints. Then assign to each vertex a positive rational number using an algorithm similar to that in~\S\,\ref{ssec:friezetriang}: for each triangle where two vertices already have numbers, write their Farey sum in the third vertex.

Now mark all $n$ resulting rational numbers on the absolute of the hyperbolic plane and draw all possible edges of the Farey graph between them; this will reconstruct the original triangulation.

This method of assigning numbers to triangulation vertices already appeared in~\S\,\ref{ss:conttriang} for the special case of ``bamboo'' triangulations: here, the marked side was the left side of the triangulation.

\begin{example} Consider the  triangulation of a heptagon shown on p.~\pageref{pic:triang75}. Its embedding into the Farey graph is shown below.
\end{example}

\begin{figure}[h!]
\includegraphics{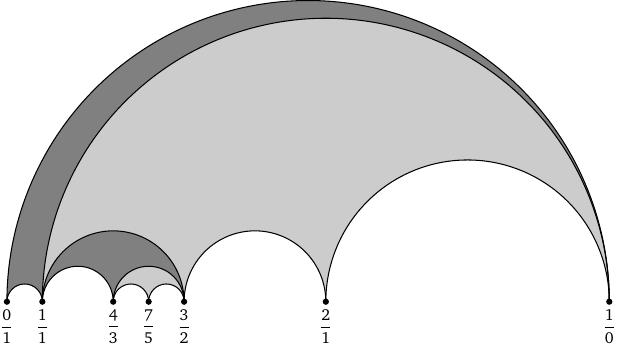}
\caption{Triangulation corresponding to the fraction $\dfrac 75$}\label{fig:mainexample}
\end{figure}

\begin{prop} Suppose the vertices of the $n$-gon are labeled with irreducible fractions $\dfrac{u_1}{v_1}=\dfrac 10, \dfrac{u_2}{v_2},\dots,\dfrac{u_n}{v_n}=\dfrac 01$. Then the frieze corresponding to this triangulation has adjacent diagonals $v_2,\dots,v_n$ and $u_1,u_2,\dots,u_{n-1}$.
\end{prop}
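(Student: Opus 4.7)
The plan is to apply Proposition~\ref{prop:algtriang} twice --- once to the numerator sequence directly and once to an auxiliary companion of the denominator sequence --- and then identify both as adjacent diagonals of the frieze. First, observe that the Farey sum $(p/q)\oplus(r/s)=(p+r)/(q+s)$ decomposes componentwise, so the sequences $(u_i)$ and $(v_i)$ both satisfy the additive rule used in Proposition~\ref{prop:algtriang}: at every triangle visited in the Farey propagation the label at the new vertex equals the sum of the two labels already in place.

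For the numerators, the initial data $u_1=1$, $u_n=0$ at the adjacent pair $(1,n)$ match Proposition~\ref{prop:algtriang} once vertex $n$ is renamed as its ``vertex $0$'' and vertex $1$ as its ``vertex $1$''. The quiddity in this renaming is $(c_1,\dots,c_n)$, and the proposition identifies $(u_1,\dots,u_{n-1})$ with the frieze diagonal through $c_1=a_1$.

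For the denominators, I introduce an auxiliary additive labeling $\tilde u$ with $\tilde u_1=0$, $\tilde u_2=1$, propagated from the edge $(1,2)$ by the sum rule. Proposition~\ref{prop:algtriang} now applies directly --- with vertex $1$ as ``vertex $0$'', vertex $2$ as ``vertex $1$'', and shifted quiddity $(c_2,\dots,c_n,c_1)$ --- and identifies $(\tilde u_1,\tilde u_2,\dots,\tilde u_n)$ with the frieze diagonal through $c_2$. In particular $\tilde u_n$ is the terminal entry of this diagonal, which sits in row $n-1$ of the frieze (the row of ones), so $\tilde u_n=1$.

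To conclude $v=\tilde u$, first verify $v_2=1$: the polygon side $(1,2)$ is an edge of the triangulation, hence a Farey edge in its Farey-graph embedding, so $u_1v_2-u_2v_1=\pm 1$; with $u_1=1$ and $v_1=0$ this forces $v_2=1$. Thus $v$ and $\tilde u$ are both Farey denominator labelings agreeing on the adjacent pair $(1,2)$ with values $(0,1)$, and a Farey labeling is uniquely determined by its values at any two adjacent vertices (its propagation unfolds without ambiguity along the triangulation's dual tree, exactly as in the proof of Proposition~\ref{prop:algtriang}). Hence $v=\tilde u$, so $(v_2,\dots,v_n)$ is the frieze diagonal through $c_2$, immediately adjacent to the numerator diagonal through $c_1$. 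The main delicate point throughout is the cyclic bookkeeping in choosing the ``right'' renaming for Proposition~\ref{prop:algtriang} and, crucially, anchoring the auxiliary labeling at vertex $2$ rather than vertex $n$ --- this is what lets one sidestep any reflection of the frieze and produce the adjacent diagonal directly.
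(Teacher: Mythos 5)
Your proof is correct and takes essentially the same route as the paper: the Farey sum decomposes componentwise, so the numerators and the denominators each form an additive labeling, and each is therefore a frieze diagonal by Proposition~\ref{prop:algtriang}. Your extra step of re-anchoring the denominator labeling at the edge $(1,2)$ (with $v_2=1$ forced by the Farey edge condition $u_1v_2-u_2v_1=\pm 1$) is a nice refinement that makes the adjacency of the two diagonals immediate, whereas the paper anchors the denominators at $(v_1,v_n)=(0,1)$ and leaves that point implicit.
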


\begin{proof}
By computing Farey sums of fractions, we essentially apply the algorithm from \S\,\ref{ssec:friezetriang} twice: separately to numerators and denominators. Thus, the sequence of numerators will be a diagonal of the frieze corresponding to the initial data $u_1=1,u_n=0$, and the sequence of denominators will correspond to the initial data $v_1=0$, $v_n=1$.
\end{proof}

\begin{example} The triangulation from the previous example corresponds to the diagonals of the following frieze.
\begin{figure*}[h!]
\includegraphics{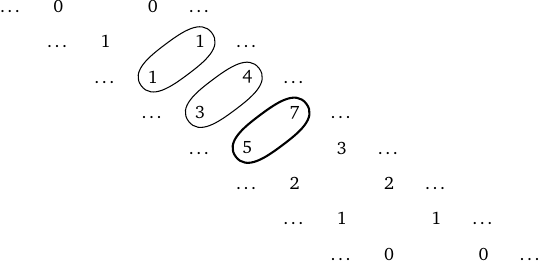}    
\end{figure*}

Thus, the numerator and denominator of each fraction at the vertices appear along the antidiagonal. In particular, the negative convergents to $\dfrac 75$, namely $\dfrac 11$, $\dfrac 43$, and $\dfrac 75$ itself, can be read along the antidiagonals. 
\end{example}

\subsection{The action of the group $\PSL_2(\ZZ)$}

Consider the group of real $2\times 2$ matrices with determinant 1, denoted by $\SL_2(\RR)$. This group acts on $\HH$ as follows: the element $\begin{pmatrix}a&b\\c&d\end{pmatrix}$ maps the point $z\in\HH$ to $\dfrac{az+b}{cz+d}$.

\begin{exercise} Verify that this is indeed an action, i.e., the upper half-plane is mapped to itself, and matrix multiplication corresponds to composition of mappings.
\end{exercise}

Additionally, $\SL_2(\RR)$ contains a non-trivial element acting as the identity on $\HH$: the matrix $-I$. Thus, we can consider the quotient group $\SL_2(\RR)/\{\pm I\}$, denoted $\PSL_2(\RR)$, that also acts on $\HH$. Its elements can be viewed as $2\times 2$ matrices taken up to sign: $\begin{pmatrix}a&b\\c&d\end{pmatrix}$ and $\begin{pmatrix}-a&-b\\-c&-d\end{pmatrix}$ correspond to the same element in the quotient group.

Finally, we consider not the entire group $\PSL_2(\RR)$ but rather its subgroup $\PSL_2(\ZZ)$, consisting of all integer matrices in $\PSL_2(\RR)$. Clearly, it maps rational points on the absolute to rational points. Moreover, it preserves the Farey graph: two points $p/q$ and $r/s$ are connected by an edge if and only if the matrix $\begin{pmatrix}p&r\\q&s\end{pmatrix}$ has determinant $\pm 1$; this means that for any matrix $\begin{pmatrix}a&b\\c&d\end{pmatrix}\in\PSL_2(\ZZ)$, the product $\begin{pmatrix}a&b\\c&d\end{pmatrix}\begin{pmatrix}p&r\\q&s\end{pmatrix}$ has the same determinant. This is equivalent to saying the images of these points, $\dfrac{ap+bq}{cp+dq}$ and $\dfrac{ar+bs}{cr+ds}$, are also connected by an edge. Thus, the group $\PSL_2(\ZZ)$ preserves the Farey graph.

\begin{remark} This action can be viewed as follows: the group $\SL_2(\RR)$ naturally acts on $\CC^2$. By associating to a non-zero vector $(z_1, z_2)\in \CC^2$ its ``slope'' $z=\dfrac{z_1}{z_2}\in \CC \cup\{\infty\}$ (in other words, considering the action of $\SL_2(\RR)$ on lines through the origin), we obtain the described action. In these terms, the absolute corresponds to the real subset $\RR^2\subset \CC^2$ preserved by $\SL_2(\RR)$. Here, a pair of vertices $\dfrac pq$ and $\dfrac rs$ connected by an edge in the Farey graph corresponds to the pair of vectors $(p,q)$ and $(r,s)\in \ZZ^2$ with $\det\left(\begin{matrix}p &r\\q&s\end{matrix}\right)=1$. Of course, a matrix $A\in \SL(2,\ZZ)$ maps such a pair of vectors to another such pair.
\end{remark}

\begin{prop}\label{pr:transit} The group $\PSL_2(\ZZ)$ acts transitively and faithfully on the oriented edges of the Farey graph: any edge (with marked start and end) can be mapped to any other by a unique transformation.
\end{prop}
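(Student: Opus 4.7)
The plan is to reduce both assertions (existence and uniqueness of the mapping transformation) to the study of the stabilizer of a single base oriented edge, which I take to be $e_0 = (\infty, 0) = (1/0, 0/1)$. The two claims packaged in the proposition together say that the action on oriented edges is \emph{simply transitive}, and for a transitive action this is equivalent to the stabilizer of $e_0$ being trivial in $\PSL_2(\ZZ)$.

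For transitivity, I would take an arbitrary oriented edge $(p/q, r/s)$ with $p/q, r/s \in \overline{\QQ}$ written in lowest terms. By definition of the Farey graph, $ps - qr = \pm 1$; after replacing $(r,s)$ by $(-r,-s)$ if needed (which does not change the fraction $r/s$), I may assume $ps - qr = 1$. Then the matrix
\[
A = \begin{pmatrix} p & r \\ q & s \end{pmatrix}
\]
lies in $\SL_2(\ZZ)$, and its Möbius action sends $1/0 \mapsto p/q$ and $0/1 \mapsto r/s$, carrying $e_0$ to the prescribed edge. This gives the existence part.

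For uniqueness (equivalently, that the stabilizer of $e_0$ is trivial), suppose $\begin{pmatrix} a & b \\ c & d \end{pmatrix} \in \SL_2(\ZZ)$ fixes the ordered pair $(\infty, 0)$. The condition $\frac{a\cdot 1 + b\cdot 0}{c\cdot 1 + d\cdot 0} = \infty$ forces $c = 0$, and $\frac{a\cdot 0 + b\cdot 1}{c\cdot 0 + d\cdot 1} = 0$ forces $b = 0$. Combined with $ad - bc = 1$, this gives $a = d = \pm 1$, so the matrix is $\pm I$, which represents the identity in $\PSL_2(\ZZ)$. Hence the stabilizer of $e_0$ is trivial, and combined with transitivity this proves that the unique element of $\PSL_2(\ZZ)$ mapping one oriented edge to another is $A' A^{-1}$, where $A$ and $A'$ are the matrices constructed in the existence step for the source and target edges respectively.

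I do not foresee any serious obstacle; the only subtlety is bookkeeping with signs, namely making sure the identifications $[p:q] = [-p:-q]$ in $\overline{\QQ}$ and $\pm I \sim I$ in $\PSL_2(\ZZ)$ are used consistently so that the sign ambiguity of the determinant in the Farey graph condition and the sign ambiguity in passing from $\SL_2$ to $\PSL_2$ cancel exactly.
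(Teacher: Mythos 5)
Your proposal is correct and follows essentially the same route as the paper: map the base edge $(\infty,0)$ to an arbitrary edge via the matrix whose columns are $(p,q)$ and $(r,s)$ (normalized to determinant $1$), and establish uniqueness by checking that the stabilizer of $(\infty,0)$ is $\{\pm I\}$, hence trivial in $\PSL_2(\ZZ)$. You merely spell out the sign bookkeeping and the stabilizer computation that the paper leaves implicit.
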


\begin{proof}
We show that the pair $(\infty,0)$ can be mapped to the pair $\left(\dfrac pq,\dfrac rs\right)$, where $ps-qr=1$, by a linear-fractional transformation with integer coefficients. Indeed, this transformation is given by the matrix $\begin{pmatrix}p &r\\q&s\end{pmatrix}$. The faithfulness of the action is also clear: it suffices to check that the stabilizer of the pair $(\infty,0)$ is trivial. Obviously, it consists only of $\pm I$.
\end{proof}

From this, we easily derive the previously mentioned result about the planarity of the Farey graph.

\begin{corollary}\label{cor:planar} The edges of the Farey graph do not intersect.
\end{corollary}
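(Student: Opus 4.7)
The plan is to exploit the transitivity of the $\PSL_2(\ZZ)$-action on oriented edges (Proposition~\ref{pr:transit}) to reduce the statement to a single concrete check, and then verify that check by a short arithmetic argument.

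First I would observe that $\PSL_2(\ZZ) \subset \PSL_2(\RR)$ acts on the upper half-plane $\HH$ by Möbius transformations, which are hyperbolic isometries; in particular, they send hyperbolic lines to hyperbolic lines and preserve the property of two lines intersecting in $\HH$. Since we have already established that $\PSL_2(\ZZ)$ preserves the Farey graph, two distinct Farey edges $e_1, e_2$ intersect inside $\HH$ if and only if their images under any $g \in \PSL_2(\ZZ)$ do. By Proposition~\ref{pr:transit}, I can pick $g$ that sends $e_1$ to the edge from $\infty$ to $0$, which in our model is represented by the vertical line $\{iy : y>0\}$ (or by its image under our chosen Möbius transformation, which is again a hyperbolic line).

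Next I would reduce the crossing condition to a statement about the endpoints on the absolute. A hyperbolic line in the upper half-plane model with endpoints $\alpha, \beta \in \RR \cup \{\infty\}$ and another with endpoints $\alpha', \beta'$ meet in $\HH$ precisely when the two pairs interlock on the circular absolute. Applied to $e_1 = (\infty, 0)$ and an arbitrary other Farey edge $e_2 = \bigl(\tfrac pq, \tfrac rs\bigr)$, interlocking means that $\tfrac pq$ and $\tfrac rs$ lie strictly on opposite sides of $0$, i.e.\ one is strictly negative and the other strictly positive (if either endpoint equals $0$ or $\infty$, then $e_2$ shares a vertex with $e_1$ and does not cross it in $\HH$). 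So it remains to show that no Farey edge connects a strictly negative rational to a strictly positive one.

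Finally I would verify this claim by a one-line estimate. Normalize the irreducible representatives so that the denominators are positive: say $\tfrac pq$ with $p \leq -1$, $q \geq 1$ and $\tfrac rs$ with $r \geq 1$, $s \geq 1$. Then
\[
qr - ps \;=\; qr + (-p)s \;\geq\; 1\cdot 1 + 1\cdot 1 \;=\; 2,
\]
so $|ps - qr| \geq 2$, which rules out $ps - qr = \pm 1$; hence $\tfrac pq$ and $\tfrac rs$ are not joined by an edge of the Farey graph. Combining the three steps yields that distinct Farey edges do not intersect inside $\HH$. The only mildly delicate point is the reduction step, i.e.\ recording that $\PSL_2(\ZZ)$ acts by hyperbolic isometries and therefore preserves crossings; once this is in hand, the rest is essentially a one-inequality computation.
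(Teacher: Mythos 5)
Your proposal is correct and follows the paper's own argument essentially verbatim: reduce by transitivity of the $\PSL_2(\ZZ)$-action to the edge $(\infty,0)$, note that a crossing edge would have to join a strictly negative rational to a strictly positive one, and rule this out because the determinant $qr+(-p)s\geq 2$ cannot be $\pm 1$. You spell out the isometry/crossing-preservation step more explicitly than the paper does, but the substance is the same.
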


\begin{proof} Due to the transitivity of the action, it suffices to show that the edge connecting $0$ and $\infty$ does not intersect any other edge. Indeed, such another edge would have to connect a positive and a negative point $\dfrac ab>0>-\dfrac cd$ (where $a,b,c,d$ are positive integers). But then the determinant $ad-b(-c)=ad+cb\geq 2$ cannot equal 1.
\end{proof}

\begin{corollary}\label{cor:transit}
Consider two triangulations of an $n$-gon with identical quiddities and a marked side in each of them, embedded into the Farey graph. Then there exists a unique element of $\PSL_2(\ZZ)$ that maps one triangulation to the other and preserves the marked side.
\end{corollary}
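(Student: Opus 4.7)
The plan is to reduce everything to Proposition~\ref{pr:transit}, which already delivers both the existence and uniqueness of an element of $\PSL_2(\ZZ)$ mapping any one oriented edge of the Farey graph to any other. A marked side of an $n$-gon (embedded as in the previous section with labels $\tfrac{1}{0}$ and $\tfrac{0}{1}$ on its endpoints in a prescribed order) is precisely an oriented edge of the Farey graph. Let $e_1$ and $e_2$ be the oriented marked sides coming from the two embedded triangulations $T_1$ and $T_2$. Proposition~\ref{pr:transit} produces a unique $g \in \PSL_2(\ZZ)$ with $g(e_1)=e_2$, and this $g$ is the only candidate for the map in the statement. The uniqueness part of the corollary is therefore immediate.

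The remaining content of the corollary is that this $g$ actually carries the whole of $T_1$ onto $T_2$. For this I would use the fact that $\PSL_2(\ZZ)$ commutes with Farey sums on the Farey graph: if a vertex $v$ of the Farey graph is the third vertex of a triangle with vertices $\tfrac{a}{b}$ and $\tfrac{c}{d}$, i.e.\ $v=\tfrac{a+c}{b+d}$, then $g(v)$ is the third vertex of the triangle on $g(\tfrac{a}{b})$ and $g(\tfrac{c}{d})$. This follows by lifting fractions to coprime integer pairs and noting that $g$ acts linearly on $\ZZ^2$, so it preserves vector sums; the identity $ad-bc=1$ keeps the resulting pair primitive.

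With this preservation in hand, the proof reduces to an induction along the algorithm that labels the vertices of the embedded triangulation by Farey sums starting from the marked side. Both $T_1$ and $T_2$ carry the same quiddity $(c_1,\dots,c_n)$, so by Theorem~\ref{thm:cc} they are the same combinatorial triangulation of the $n$-gon. Running the Farey-sum algorithm from $e_1$ on $T_1$ and from $e_2$ on $T_2$ therefore proceeds through the same sequence of ``two vertices known, fill in the third'' steps, and the compatibility of $g$ with Farey sums shows by induction that $g$ sends the vertex of $T_1$ labeled $\tfrac{p_i}{q_i}$ to the vertex of $T_2$ in the same combinatorial position. Consequently $g(T_1)=T_2$ as subgraphs of the Farey graph, completing the proof.

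The main point that must be checked with some care is the compatibility of $\PSL_2(\ZZ)$ with Farey sums; once that is settled the rest is essentially bookkeeping. There is also a minor pitfall about orientation: ``preserving the marked side'' has to mean preserving it as an oriented edge (the endpoints carry distinct labels $\tfrac{1}{0}$ and $\tfrac{0}{1}$), since an unoriented edge has a stabilizer of order two in $\PSL_2(\ZZ)$ generated by $S$, and uniqueness would fail in that reading.
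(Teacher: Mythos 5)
Your proof is correct and takes essentially the same route as the paper: invoke Proposition~\ref{pr:transit} for the marked sides, then propagate over the whole triangulation by induction along the dual tree using the fact that the third vertex of each triangle is the Farey sum of the other two. The details you spell out --- that $\PSL_2(\ZZ)$ commutes with mediants because it acts linearly on primitive vectors in $\ZZ^2$, and that the marked side must be taken as \emph{oriented} (the paper instead requires the correct half-plane to map to the correct half-plane, which amounts to the same thing since the action preserves orientation of $\HH$) --- are exactly the points the paper's own proof leaves implicit.
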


\begin{proof} By Proposition~\ref{pr:transit}, there is a unique element of $\PSL_2(\ZZ)$ that maps the marked sides into each other, and such that the half-plane containing the first polygon maps to the half-plane containing the second polygon. The remaining vertices of the triangulation are uniquely determined by traversing the triangulation tree: the third vertex of each triangle is obtained as the Farey sum of the first two.
\end{proof}

\subsection{Generators of the group $\PSL_2(\ZZ)$}

In Section~\ref{ssec:sl2z}, we saw that any two of the following three matrices
\[
R=\begin{pmatrix}1 & 1\\ 0 &1
\end{pmatrix},\qquad
L=\begin{pmatrix}1 & 0\\ 1 &1
\end{pmatrix},\qquad
S=\begin{pmatrix}0 & -1\\ 1 &0
\end{pmatrix}
\]
generate $\SL_2(\ZZ)$. Thus, their images generate $\PSL_2(\ZZ)$. With some abuse of notation, we will use the same symbols for elements of $\PSL_2(\ZZ)$ as in $\SL_2(\ZZ)$, i.e., before factorization.

Let us see how these transformations act on $\HH$. The operator $R$ maps $z$ to $z+1$. It fixes $\infty$ and shifts the absolute by one to the right. The transformation $R^a=\begin{pmatrix}1&a\\0&1\end{pmatrix}$ shifts the absolute by $a$ units, where $a$ can be any integer (possibly negative).

The operator $S=-\dfrac 1z$ is the composition of inversion and reflection across the imaginary axis. This transformation is involutive: $S^2=I$. Note that in $\SL_2(\ZZ)$, the matrix $\begin{pmatrix}0&-1\\1&0\end{pmatrix}$ has order 4, not 2.

The operator $L$ fixes $0$ and maps $z=\dfrac 1n$ to $\dfrac 1{n+1}$. It is easy to check (do this!) that $LS=SR^{-1}$.

\subsection{Stern--Brocot sequences}

Now consider the line $C$ connecting $\infty$ and $0$ (as before, we consider oriented lines; in the figure below, the arrow points from the start to the end of the line). Applying $R$ to it, we get the line $RC$ starting at $\infty$ and ending at $1$; similarly, $LC$ connects $1$ and $0$ on the absolute. We obtain a triangle bounded by the lines $C$, $LC$, and $RC$. Call this triangle $T$. The coordinate of its middle vertex is the Farey sum of the two extreme vertices.

Next, apply $L$ to $T$. We obtain the triangle $LT$ with vertices at $0$, $\dfrac 12$, and $1$. The sides of this triangle are $LC$, $L^2C$, and $LRC$. Similarly, the triangle $RT$ has sides $RC$, $RLC$, and $R^2C$ (see Fig.~\ref{fig:sterntriang}). This gives the following set of vertices on the absolute: $0$, $\dfrac 12$, $1$, $2$, $\infty$.

\begin{figure}[h!]
\includegraphics[width=10cm]{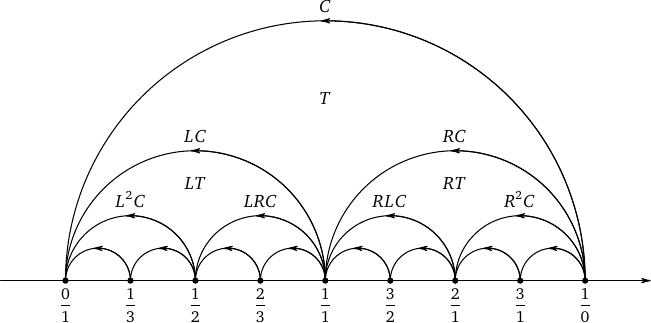}
\caption{Stern--Brocot triangulation}\label{fig:sterntriang}
\end{figure}

This procedure can be continued: apply $L$ and $R$ to $LT$ and $RT$. The result is a set of nine vertices: $0=\dfrac 01$, $\dfrac 13$, $\dfrac 12$, $\dfrac 23$, $\dfrac 11$, $\dfrac 32$, $\dfrac 21$, $\dfrac 31$, $\dfrac 10=\infty$, where each pair of consecutive vertices is connected by one of the lines obtained from $C$ by applying $L$ and $R$ in all possible orders. Note that each new vertex on the absolute is the Farey sum of its neighbors.

This motivates the following definition.

\begin{definition} The \emph{zeroth Stern\footnote{Moritz Abraham Stern (1807--1894) was a German mathematician specializing in number theory. A student of Carl Friedrich Gauss, he introduced the concept of a Stern prime: a prime number that cannot be expressed as the sum of a smaller prime and twice the square of an integer. He defined the Stern--Brocot tree in an 1858 publication, independently of Achille Brocot.}--Brocot\footnote{Achille Brocot (1817--1878) was a French clockmaker and amateur mathematician. A co-founder of the Parisian clockmaking firm ``Brocot \& Delettrez'', he authored several improvements to clock mechanisms. Independently of Stern, he devised the Stern--Brocot tree for approximating real numbers with rational numbers, which played an important role in calculating the parameters of gears used in clock mechanisms.} sequence} is the sequence of fractions $\left(\dfrac 01, \dfrac 11, \dfrac 10\right)$. The \emph{$k$-th Stern--Brocot sequence} is obtained from the $(k-1)$-th sequence by inserting the mediant between each pair of consecutive terms.
\end{definition}

Thus, the first Stern--Brocot sequence is equal to $\left(\dfrac 01,\dfrac 12,\dfrac 11,\dfrac 21, \dfrac 10\right)$, the second sequence is $\left(\dfrac 01,\dfrac  13, \dfrac 12, \dfrac 23, \dfrac 11,\dfrac 32,\dfrac 21, \dfrac 31,\dfrac 10\right)$, and so on.

\begin{remark}
Here, as before, we treat infinity as an admissible ``number,'' expressed as the irreducible ``fraction'' $\dfrac 10$. This will not cause any issues since we will only compare fractions (considering infinity greater than any number) and take their Farey sums. We will not further elaborate on this point.
\end{remark}

Clearly, each Stern--Brocot sequence is increasing: this follows from the fact that the mediant of two fractions lies between them. In particular, all terms in each sequence are distinct.

\begin{theorem}\label{thm:stern}
\begin{enumerate} 
\item Any two fractions $\dfrac pq<\dfrac rs$ that are neighbors in some Stern--Brocot sequence satisfy the relation $qr-ps=1$, i.e., the corresponding numbers are connected by an edge in the Farey graph;

\item all fractions appearing in Stern--Brocot sequences are irreducible;

\item every irreducible fraction $\dfrac ab>0$ appears in some Stern--Brocot sequence.
\end{enumerate}
\end{theorem}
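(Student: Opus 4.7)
The plan is to prove parts (1) and (2) by a simultaneous induction on $k$, then deploy a "simplest fraction between Farey neighbors" lemma for part (3).

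For parts (1) and (2): the base case $k=0$ is immediate, since the fractions $0/1, 1/1, 1/0$ are irreducible and the determinants $1\cdot 1 - 0\cdot 1 = 1$ and $1\cdot 1 - 1\cdot 0 = 1$ hold for the two consecutive pairs. For the inductive step, take two consecutive fractions $p/q < r/s$ in the $k$-th sequence satisfying $qr-ps=1$, and insert their mediant $(p+r)/(q+s)$. A direct computation gives $q(p+r) - p(q+s) = qr - ps = 1$ and $(q+s)r - (p+r)s = qr - ps = 1$, which proves (1) at level $k+1$. The irreducibility of $(p+r)/(q+s)$ is exactly the content of the exercise stated right after the definition of the Farey graph, so (2) also propagates.

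For part (3), the engine is the following lemma: \emph{if $a/b$ is an irreducible fraction with $p/q < a/b < r/s$ and $qr-ps=1$, then $a \geq p+r$ and $b \geq q+s$.} This comes from the identities
\[
a = (aq-bp)\,r + (br-as)\,p, \qquad b = (aq-bp)\,s + (br-as)\,q,
\]
which follow by substituting $qr-ps=1$; since $aq-bp$ and $br-as$ are positive integers (hence each $\geq 1$), the inequalities follow.

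Given a positive irreducible fraction $a/b$, I would then argue by contradiction. Suppose $a/b$ appears in no Stern--Brocot sequence. Since the $k$-th sequence begins at $0/1$ and ends at $1/0$, there is a unique pair $p_k/q_k < a/b < r_k/s_k$ of consecutive terms of the $k$-th sequence bracketing it, and by parts (1) and (2) these satisfy $q_kr_k - p_ks_k = 1$. Passing from level $k$ to level $k+1$, the fraction $a/b$ must lie strictly in one of the two subintervals formed by inserting the mediant (it cannot equal the mediant, by assumption). A short check shows that whichever subinterval is selected, the new bracketing pair has both coordinate sums $p_{k+1}+r_{k+1}$ and $q_{k+1}+s_{k+1}$ strictly larger than the previous ones. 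Applying the simplest-fraction lemma at every level gives $p_k + r_k \leq a$ and $q_k + s_k \leq b$ for all $k$, contradicting the strict monotone increase.

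The main obstacle is the simplest-fraction lemma; once it is in hand, the rest of (3) is a monotonicity-plus-boundedness argument. Parts (1) and (2) are routine once one trusts the mediant-irreducibility exercise.
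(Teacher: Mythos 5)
Your proposal is essentially the paper's own argument: parts (1) and (2) by the same mediant computation, and part (3) via the inequalities $aq-bp\geq 1$, $br-as\geq 1$ combined with the identity $(r+s)(aq-bp)+(p+q)(br-as)=a+b$ --- your two separate identities for $a$ and $b$ are exactly this identity split into its numerator and denominator halves, so your ``simplest-fraction lemma'' is a mild repackaging rather than a different route. One small inaccuracy: it is not true that \emph{both} sums $p_{k+1}+r_{k+1}$ and $q_{k+1}+s_{k+1}$ strictly increase at every step. If the bracketing pair is $\left(\dfrac{0}{1},\dfrac{1}{k}\right)$ and $\dfrac{a}{b}$ falls in the left subinterval, the numerator sum stays equal to $1$; symmetrically, near $\dfrac{1}{0}$ the denominator sum can stall. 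What is true, and suffices, is that both sums are non-decreasing and at least one strictly increases at each step (the total $p+q+r+s$ grows by $p+q\geq 1$ or $r+s\geq 1$), so the bound $a+b\geq p+q+r+s$ from your lemma still yields the contradiction after at most $a+b$ steps, exactly as in the paper.
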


\begin{proof}
(1) We prove this by induction. For the zeroth sequence, the claim is obvious. When we insert the mediant $\dfrac {p+r}{q+s}$ between two neighbors $\dfrac pq$ and $\dfrac rs$, the required relations hold for it as well:
\[
(p+r)q-(q+s)p=qr-ps=1;\qquad r(q+s)-s(p+r)=qr-ps=1.
\]

(2) This follows from (1): if $qr-ps=1$, then $p$ and $q$ are coprime.

(3) We show that every irreducible fraction $\dfrac ab$ eventually appears in some Stern--Brocot sequence. Suppose it does not appear in a certain sequence. Find the closest neighbors $\dfrac pq$ and $\dfrac rs$ in this sequence between which $\dfrac ab$ lies:
\[
\dfrac pq< \dfrac ab< \dfrac rs.
\]
Proceed to the next sequence by replacing one endpoint of $\left[\dfrac pq;\dfrac rs\right]$ with the mediant of its endpoints and choosing the half containing $\dfrac ab$. This process cannot continue indefinitely because the conditions
\[
\dfrac ab-\dfrac pq>0 \qquad\text{and}\qquad \dfrac rs-\dfrac ab>0
\]
imply that
\[
aq-bp\geq 1 \qquad\text{and}\qquad br-as\geq 1.
\]
Thus, we have
\begin{equation}\label{eq:stern}
(r+s)(aq-bp)+(p+q)(br-as)\geq p+q+r+s.
\end{equation}

On the other hand, the left-hand side of (\ref{eq:stern}) equals
\[
(r+s)(aq-bp)+(p+q)(br-as) = a((r+s)q-(p+q)s) + b((p+q)r-(r+s)p) = a+b.
\]
Thus, (\ref{eq:stern}) is equivalent to
\[
a+b\geq p+q+r+s.
\]
But at each step, either $p$, $q$, $r$, or $s$ increases, so this inequality will fail after at most $a+b$ steps.
\end{proof}

Returning to Fig.~\ref{fig:sterntriang}, Theorem~\ref{thm:stern} implies that every positive rational number appears as a vertex of some triangle in the Stern--Brocot triangulation.

This picture can also be interpreted as follows. Consider the infinite graph dual to the Stern--Brocot triangulation: assign to each triangle a vertex labeled by the ``middle'' vertex of the triangle. Connect vertices corresponding to adjacent triangles. We obtain a binary tree, the beginning of which is shown in Fig.~\ref{fig:sterntree}; it is also called the \emph{Stern--Brocot tree}.

\begin{figure}[h!]
\includegraphics{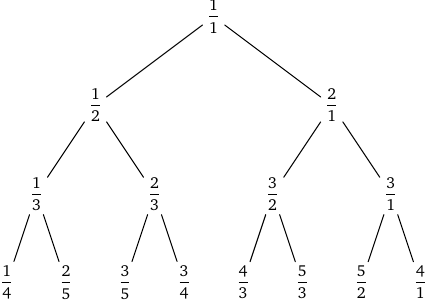}
\caption{Stern--Brocot tree}\label{fig:sterntree}
\end{figure}

Theorem~\ref{thm:stern} states that every positive rational number appears in the Stern--Brocot tree exactly once. Thus, each number can be encoded by a path from the root (i.e., from $\dfrac 11$) to the corresponding vertex: each number corresponds to a unique word in the letters $R$ and $L$. For example, $\dfrac 21$ corresponds to $R$, $\dfrac 43$ to $RLL$, and so on. This yields the following corollary.

\begin{corollary}\label{cor:stern} For every positive irreducible fraction $\dfrac{p}{q}>1$, there exists a sequence of exponents $(a_1,\dots,a_{2m})$ such that
\[
\begin{pmatrix} p\\ q\end{pmatrix}=R^{a_1}L^{a_2}\dots R^{a_{2m-1}}L^{a_{2m}}\begin{pmatrix} 1\\ 0\end{pmatrix}=M^+(a_1,\dots,a_{2m})\begin{pmatrix} 1\\ 0\end{pmatrix},
\]
with all $a_i>0$.
\end{corollary}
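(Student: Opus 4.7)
The plan is to leverage Theorem~\ref{thm:stern} via the Stern--Brocot tree encoding discussed just before the corollary, and then reinterpret the resulting word in $\{R,L\}^*$ as a matrix product of the required alternating shape.

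First I would invoke the paragraph preceding the corollary: every positive irreducible fraction sits at a unique vertex of the Stern--Brocot tree, and the path from the root $\dfrac 11$ to $\dfrac pq$ reads off a unique word $w$ in the letters $R$ and $L$ satisfying $w\cdot(1,1)^T=(p,q)^T$ (the examples $R\leftrightarrow 2/1$ and $RLL\leftrightarrow 4/3$ in the text make the normalization precise). Under the hypothesis $\dfrac pq>1$, I would argue that $w$ must begin with the letter $R$: at the root, the left child is $\dfrac 12<1$ and the right child is $\dfrac 21>1$, and since the mediant always lies strictly between its two parent fractions, an induction on depth shows that every descendant of the left child is less than $1$ while every descendant of the right child exceeds $1$. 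Hence $\dfrac pq>1$ forces the first letter of $w$ to be $R$, and in particular $w$ is nonempty.

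Next I would change the base point from $(1,1)^T$ to $(1,0)^T$ using the trivial identity $L\cdot(1,0)^T=(1,1)^T$. Setting $w':=wL$, this yields
\[
w'\cdot\begin{pmatrix} 1\\ 0\end{pmatrix}=w\cdot\begin{pmatrix} 1\\ 1\end{pmatrix}=\begin{pmatrix} p\\ q\end{pmatrix}.
\]
By construction $w'$ starts with $R$ (inherited from $w$) and ends with $L$ (since we appended it). Grouping its letters into maximal monochromatic blocks therefore produces an alternating decomposition
\[
w'=R^{a_1}L^{a_2}R^{a_3}\cdots R^{a_{2m-1}}L^{a_{2m}}
\]
with every $a_i\ge 1$, and the total number of blocks is automatically \emph{even} precisely because the first block is $R$ and the last is $L$. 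Applying the decomposition formula~\eqref{eq:mplusgen}, the right-hand side is exactly $M^+(a_1,\dots,a_{2m})$, which gives the claimed equality $\begin{pmatrix} p\\ q\end{pmatrix}=M^+(a_1,\dots,a_{2m})\begin{pmatrix} 1\\ 0\end{pmatrix}$.

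The proof is essentially bookkeeping once the Stern--Brocot tree encoding is granted, so there is no real obstacle; the one place that requires mild care is the justification that $\dfrac pq>1$ really forces $w$ to begin with $R$, because without this the trick of appending an $L$ would produce a word whose block decomposition starts with $L$ (and thus would not match the required shape $R^{a_1}L^{a_2}\cdots$). This is handled by the elementary order-preservation property of mediants noted above. Everything else---the identification $L\cdot(1,0)^T=(1,1)^T$, the parity of the number of blocks in $w'$, and the translation into $M^+$---is direct.
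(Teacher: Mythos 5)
Your proposal is correct and follows essentially the same route as the paper: both arguments start from the Stern--Brocot tree encoding of $\dfrac pq$ as a word in $R$ and $L$ applied to $\begin{pmatrix}1\\1\end{pmatrix}$, use the identity $\begin{pmatrix}1\\1\end{pmatrix}=L\begin{pmatrix}1\\0\end{pmatrix}$ to shift the base point, and observe that $\dfrac pq>1$ forces the word to begin with $R$ so that the block decomposition has the required alternating shape with an even number of blocks. Your write-up merely spells out the mediant-monotonicity argument for the first letter and the block-counting, which the paper leaves implicit.
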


\begin{proof}
Note that $\begin{pmatrix} 1\\ 1\end{pmatrix}=L\begin{pmatrix} 1\\ 0\end{pmatrix}$ corresponds to the point $1$ on the absolute. Theorem~\ref{thm:stern} states that any positive rational number on the absolute can be obtained from $1$ by applying a word in $R$ and $L$. Since $\dfrac pq>1$, this number lies in the right half of the Stern--Brocot graph, so the first letter of the corresponding word is $R$, not $L$.
\end{proof}

To summarize: we have figured out the geometric meaning of the operators $R$ and $L$. They map the entire Stern--Brocot tree to its right and left halves, respectively. Thus, the sequence $(a_1,\dots,a_{2m})$ defines the transformation $M^+(a_1,\dots,a_{2m})$, which maps the line $(\infty, 0)$ to $\left(\dfrac{p}{q}, \dfrac{r}{s}\right)$, where $\dfrac rs$ is the last convergent to $\dfrac{p}{q}$. The resulting triangles define a ``bamboo'' triangulation of a polygon with the marked side $(\infty,0)$, consisting of $a_1$ upward-pointing triangles, $a_2$ downward-pointing triangles, $a_3$ upward-pointing triangles, and so on.

\subsection{Rotating a triangulation}

Consider a triangulation of an $n$-gon embedded in the Farey graph. Let $c_1$ be the quiddity of the first vertex, i.e., the number of triangles meeting at the vertex labeled $0$. This means the second vertex is labeled by $\dfrac 1{c_1}$ (why?). Consider the operator $\begin{pmatrix} c_1 & -1\\1 &0\end{pmatrix}$ and apply it to $\HH$. Let us see where our $n$-gon is mapped. First, consider the images of the first two vertices, $0$ and $\dfrac 1{c_1}$:
\[
\begin{pmatrix} c_1 & -1\\1 &0\end{pmatrix}\begin{pmatrix}0\\ 1\end{pmatrix}= \begin{pmatrix}-1\\ 0\end{pmatrix}=\infty;\qquad 
\begin{pmatrix} c_1 & -1\\1 &0\end{pmatrix}\begin{pmatrix}1\\ c_1\end{pmatrix}= \begin{pmatrix}0\\ 1\end{pmatrix}=0.
\]
Thus, they are mapped to the last and first vertices, respectively, and the line connecting them is mapped to $(\infty,0)$.

The remaining vertices are also mapped to some vertices with positive coordinates (check: why positive?). Generally, they will not coincide with the vertices of the original $n$-gon. However, their relative positions on the line are preserved since the absolute is preserved by this transformation.

Let $\alpha_1=0,\alpha_2=\dfrac 1{c_1}, \dots,\alpha_n=\infty$ be the vertices of the original polygon, where $p_i\in\QQ_{\geq 0}\cup\{\infty\}$. Let $\beta_i$ be the image of $\alpha_i$ under the action of $\begin{pmatrix} c_1 & -1\\1 &0\end{pmatrix}$. Our $n$-gon is mapped to an $n$-gon with vertices $\beta_1=\infty, \beta_2=0,\dots,\beta_{n}=c$.

This operation corresponds to rotating the $n$-gon. We have proved the following proposition.

\begin{prop}\label{prop:turn} The transformation $\begin{pmatrix} c_1 & -1\\1 &0\end{pmatrix}$ maps an $n$-gon with side $(\infty,0)$ and quiddity $(c_1,\dots,c_n)$ (where the first vertex is $0$) to an $n$-gon with side $(\infty,0)$ and quiddity $(c_n,c_1,\dots,c_{n-1})$.
\end{prop}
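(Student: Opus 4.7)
The plan is to compute directly the action of $T = \begin{pmatrix}c_1 & -1\\1 & 0\end{pmatrix}$ on the vertices of the embedded polygon and verify that the image realizes the claimed rotated polygon. The first task is to establish the geometric identification $\alpha_2 = 1/c_1$. Since $c_1$ triangles meet at $\alpha_1 = 0$, the fan at $0$ consists of $c_1$ triangles whose outer vertices are $c_1+1$ Farey neighbors of $0$; such neighbors have the form $1/k$ for $k \in \ZZ_{\geq 0}$, and two of them $1/m$, $1/n$ are themselves Farey-adjacent iff $|m-n|=1$. Since successive fan vertices (the apices of two adjacent triangles) must be Farey-adjacent, and one endpoint of the fan is $\alpha_n = \infty = 1/0$, an induction forces the fan vertices to be exactly $1/0, 1/1, \dots, 1/c_1$, so that $\alpha_2 = 1/c_1$.

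A direct matrix computation then gives
\[
T\begin{pmatrix}0\\1\end{pmatrix} = \begin{pmatrix}-1\\0\end{pmatrix},\qquad T\begin{pmatrix}1\\c_1\end{pmatrix} = \begin{pmatrix}0\\1\end{pmatrix},\qquad T\begin{pmatrix}1\\0\end{pmatrix} = \begin{pmatrix}c_1\\1\end{pmatrix},
\]
so $T(\alpha_1) = \infty$, $T(\alpha_2) = 0$, and $T(\alpha_n) = c_1$, and the image polygon therefore inherits a side $(\infty, 0)$ between $T(\alpha_1)$ and $T(\alpha_2)$. Since $T \in \PSL_2(\ZZ)$ preserves the Farey graph, it sends the triangulated polygon to a triangulated polygon, and the number of triangles meeting at $T(\alpha_i)$ equals the number meeting at $\alpha_i$, namely $c_i$. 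Being orientation-preserving ($\det T = 1$), $T$ also preserves the cyclic order on the absolute.

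To verify that the image actually lives in the positive Farey graph, note that by convexity of the embedded polygon the positive-real vertices are ordered $\alpha_2 < \alpha_3 < \dots < \alpha_{n-1}$, so $\alpha_i \geq 1/c_1$ for every $i \ge 2$. Since $T(z) = c_1 - 1/z$ is monotone increasing on $[1/c_1, \infty)$ with $T(1/c_1) = 0$ and $T(\infty) = c_1$, the images $T(\alpha_3), \dots, T(\alpha_{n-1})$ lie in $(0, c_1)$, and the image polygon has its vertices at $0, T(\alpha_3), \dots, T(\alpha_{n-1}), c_1, \infty$ in cyclic order. Choosing $(\infty, 0)$ as the new marked side and $T(\alpha_2) = 0$ as the new first vertex, and reading off the quiddities in the resulting cyclic labeling, yields the cyclic shift of $(c_1, \dots, c_n)$ claimed by the proposition.

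The main obstacle is the identification $\alpha_2 = 1/c_1$, which requires a careful combinatorial analysis of the fan structure at the vertex $0$ combined with the description of Farey-adjacency among fractions of the form $1/k$. Once this is settled, the remainder of the proof reduces to the direct matrix computations above together with the standard facts that $\PSL_2(\ZZ)$ preserves the Farey graph and its orientation.
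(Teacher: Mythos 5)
Your approach is the same as the paper's: identify $\alpha_2=1/c_1$, compute the images of $0$, $1/c_1$ and $\infty$ under $T$, and use the fact that $T$ preserves the Farey graph and the cyclic order on the absolute. In fact you do more than the paper, which leaves both the identification $\alpha_2=1/c_1$ and the positivity of the remaining image vertices as parenthetical ``why?'' exercises; your fan argument (neighbors of $0$ are of the form $1/k$, consecutive ones differ by $1$ in the denominator, the chain starts at $1/0$) and your monotonicity argument for $T$ on $[1/c_1,\infty)$ are exactly the right way to fill those gaps.

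The one weak point is your final sentence, where you assert without computation that reading off the quiddities ``yields the cyclic shift claimed by the proposition.'' If you actually carry out that reading with your own conventions, the new first vertex $0=T(\alpha_2)$ is the image of the \emph{second} vertex of the original polygon and therefore carries quiddity $c_2$; continuing around, you get $(c_2,c_3,\dots,c_n,c_1)$, which is the one-step cyclic rotation in the \emph{opposite} direction from the $(c_n,c_1,\dots,c_{n-1})$ in the statement. (A concrete check: for the heptagon of Example~\ref{ex:triang75} read from the vertex $0$, the quiddity is $(1,4,2,1,3,2,2)$, and applying $\begin{pmatrix}1&-1\\1&0\end{pmatrix}$ produces the polygon with quiddity $(4,2,1,3,2,2,1)$, not $(2,1,4,2,1,3,2)$.) This discrepancy is really an imprecision in the statement rather than in your argument --- both versions are one-step cyclic rotations, which is all that the subsequent corollaries about $M(c_1,\dots,c_n)=\pm I$ require --- but a complete proof must either compute the shift explicitly and state which rotation is obtained, or flag the mismatch; asserting agreement with the stated direction is the one step of your write-up that does not actually go through.
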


This implies the following corollary.
\begin{corollary}
Let $(c_1,\dots,c_n)$ be the quiddity of an $n$-gon. Then
\[
\begin{pmatrix} c_1 & -1\\1 &0\end{pmatrix}\dots \begin{pmatrix} c_n & -1\\1 &0\end{pmatrix}=\pm I.
\]
\end{corollary}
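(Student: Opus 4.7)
The plan is to iterate Proposition~\ref{prop:turn}. Let $P_0$ be the $n$-gon embedded in the Farey graph with marked side $(\infty,0)$, first vertex $0$, and quiddity $(c_1,\dots,c_n)$. By that proposition the Möbius transformation $\begin{pmatrix}c_1&-1\\1&0\end{pmatrix}$ sends $P_0$ to a polygon $P_1$ with the same marked side and quiddity $(c_n,c_1,\dots,c_{n-1})$. Since the first entry of the new quiddity is $c_n$, a second application of Proposition~\ref{prop:turn} shows that $\begin{pmatrix}c_n&-1\\1&0\end{pmatrix}$ sends $P_1$ to a polygon $P_2$ with quiddity $(c_{n-1},c_n,c_1,\dots,c_{n-2})$, and so on. Continuing this way, one applies in succession the matrices
\[
\begin{pmatrix}c_1&-1\\1&0\end{pmatrix},\ \begin{pmatrix}c_n&-1\\1&0\end{pmatrix},\ \begin{pmatrix}c_{n-1}&-1\\1&0\end{pmatrix},\ \dots,\ \begin{pmatrix}c_2&-1\\1&0\end{pmatrix},
\]
and after $n$ steps the quiddity has cycled back to $(c_1,\dots,c_n)$. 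The resulting polygon must coincide with $P_0$, because an $n$-gon in the Farey graph is completely determined by its marked side together with its quiddity: the quiddity encodes the combinatorial triangulation, and then every vertex label is produced from the marked side by iterated Farey sums along that triangulation.

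Composing Möbius transformations in the reverse order in which they are applied, the cumulative transformation is represented by the matrix product
\[
\begin{pmatrix}c_2&-1\\1&0\end{pmatrix}\begin{pmatrix}c_3&-1\\1&0\end{pmatrix}\cdots \begin{pmatrix}c_n&-1\\1&0\end{pmatrix}\begin{pmatrix}c_1&-1\\1&0\end{pmatrix}.
\]
This product fixes the oriented edge $(\infty,0)$. By Proposition~\ref{pr:transit} the action of $\PSL_2(\ZZ)$ on oriented edges of the Farey graph is simply transitive, so the only element of $\PSL_2(\ZZ)$ that fixes $(\infty,0)$ is the identity; lifting to $\SL_2(\ZZ)$, the product above therefore equals $\pm I$.

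To put the product in the order required by the statement, observe that if $AB=\pm I$ in $\SL_2(\ZZ)$, then automatically $BA=\pm I$ as well, since $A$ and $B$ are each other's inverses up to sign. Taking $A=\begin{pmatrix}c_2&-1\\1&0\end{pmatrix}\cdots\begin{pmatrix}c_n&-1\\1&0\end{pmatrix}$ and $B=\begin{pmatrix}c_1&-1\\1&0\end{pmatrix}$ then yields
\[
\begin{pmatrix}c_1&-1\\1&0\end{pmatrix}\begin{pmatrix}c_2&-1\\1&0\end{pmatrix}\cdots\begin{pmatrix}c_n&-1\\1&0\end{pmatrix}=\pm I,
\]
which is the claim. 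The part of the argument most likely to trip one up is bookkeeping: at the $k$-th rotation one must feed Proposition~\ref{prop:turn} the \emph{current} first quiddity entry (which is $c_{n-k+2}$, not $c_k$), and one must remember that composing Möbius transformations reverses the matrix order. These two conventions are exactly what turn the naive product $M(c_1)M(c_2)\cdots M(c_n)$ into the cyclic shift $M(c_2)\cdots M(c_n)M(c_1)$ coming out of the geometry, and the elementary remark that $AB=\pm I\Longleftrightarrow BA=\pm I$ is what bridges the two.
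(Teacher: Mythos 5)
Your proposal is correct and follows essentially the same route as the paper: iterate Proposition~\ref{prop:turn} until the quiddity cycles back, then invoke the simple transitivity of the $\PSL_2(\ZZ)$-action on oriented edges of the Farey graph (the content of Proposition~\ref{pr:transit}, from which Corollary~\ref{cor:transit}, used by the paper, is derived). The only difference is that you make explicit the bookkeeping the paper's one-line proof glosses over --- that the naive iteration yields the cyclically shifted product $M(c_2)\cdots M(c_n)M(c_1)$, which you correctly convert to the stated order via the observation that $AB=\pm I$ implies $BA=\pm I$.
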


\begin{proof}
Consider an $n$-gon with marked side $(\infty,0)$ and quiddity $(c_1,\dots,c_n)$. By the previous proposition, the composition of such transformations maps it to an $n$-gon with the same marked side and quiddity. According to Corollary~\ref{cor:transit}, this composition is the identity element in $\PSL_2(\ZZ)$, i.e., $\pm I$.
\end{proof}

Another corollary of Proposition~\ref{prop:turn} can be obtained by considering a ``bamboo'' triangulation with quiddity $(c_1,\dots,c_k,1,c_{k+2},\dots,1)$. Let $\dfrac pq$ be the coordinate of the $(k+1)$-th vertex. Consider the operator that rotates our triangulation $k$ times. This operator is equal to
\[
\begin{pmatrix}c_1 &-1\\1& 0\end{pmatrix}\dots \begin{pmatrix}c_k &-1\\1& 0\end{pmatrix}=M(c_1,\dots,c_k).
\] 
It maps the first vertex of the triangulation, $\infty$, to the $(k+1)$-th vertex, $\dfrac pq=[[c_1,\dots,c_k]]$. The last vertex is mapped to the $k$-th vertex, $\dfrac {\tilde{r}}{\tilde{s}}=[[c_1,\dots,c_{k-1}]]$.

Thus, we have shown that if the sequences $(a_1,\dots,a_{2m})$ and $(c_1,\dots,c_k)$ are constructed from the same ``bamboo'' triangulation, then the operators $M^+(a_1,\dots,a_{2m})$ and $M(c_1,\dots,c_k)$ map $\infty$ to the same point. Hence, the first columns of the corresponding matrices coincide (up to sign), which again proves Theorem~\ref{thm:hirz}.

\begin{exercise}
Prove Proposition~\ref{pr:hirzlite} similarly.
\end{exercise}


\section{Summary and further steps}

\subsection{Continued fractions and friezes as solutions to equations in $\SL_2(\ZZ)$}

In the previous chapters, we discussed friezes, the expansions of rational numbers into continued fractions, and the related identities in the groups $\PSL_2(\ZZ)$ and $\SL_2(\ZZ)$. 

We saw that for a positive rational number $\dfrac pq$, we can construct its positive and negative continued fraction expansions; these expansions correspond to a ``bamboo'' triangulation of some $n$-gon. If the quiddity of this $n$-gon is $(c_1,\dots,c_k,1,c_{k+2},\dots,c_{n-1},1)$, then $\dfrac pq=[[c_1,\dots,c_k]]$. Moreover, the quiddity of  a triangulation of the $n$-gon provides a solution to the matrix equation
\[
M(c_1,\dots,c_n)=-I.
\]

More generally, such a solution holds for the quiddity of an arbitrary triangulation of an $n$-gon, not necessarily a ``bamboo''. Such triangulations define friezes of order $n$. The elements of friezes are constructed based on the quiddity of the triangulation as continuants. In particular, if the triangulation was a ``bamboo'' and corresponded to the number $\dfrac pq$, then the numerators and denominators of the convergents to $\dfrac pq$ can be read on the adjacent diagonals of the frieze.

\subsection{Triangulations and $3d$-dissections} 

A question arises: for which sets $(c_1,\dots,c_n)$ is the matrix $M(c_1,\dots,c_n)$ equal to $\pm I$, i.e., defines the identity transformation in $\PSL_2(\ZZ)$? Do all such sets arise from triangulations? Obviously not: for example, one can insert three consecutive ones anywhere in an existing solution and obtain a new solution by using the relation $M(1,1,1)=-I$. So how can we describe all solutions?





A complete answer to this question was obtained in~2018. It is due to Valentin\,Ovsienko~\cite{Ovsienko18}. To formulate it, let us first define the concept of a \emph{$3d$-dissection} of a convex polygon.

\begin{definition}
A \emph{$3d$-dissection} of a convex $n$-gon (with numbered vertices) is its partition by a set of non-intersecting diagonals into polygons, such that has the number of sides of each of these polygons is divisible by~3. The \emph{quiddity} of a $3d$-dissection of an~$n$-gon is the set of numbers $(c_1,\dots,c_n)$, where $c_i$ is the number of polygons incident to the $i$-th vertex.
\end{definition}

Thus, instead of triangulations, we will consider dissections of an $n$-gon into triangles, hexagons, nonagons, etc. Unlike the case of triangulations, a $3d$-dissection of an $n$-gon cannot be reconstructed from its quiddity.

\begin{exercise}
Give an example of two distinct $3d$-dissections of an $n$-gon with the same quiddity.
\end{exercise}

\begin{hint}
It suffices to take $n=8$.
\end{hint}

It turns out that, in addition to triangulations, solutions to the equation $M(c_1,\dots,c_n)=\pm I$ can also arise from other $3d$-dissections of the $n$-gon. Namely, the following theorem holds.

\begin{theorem}[\cite{Ovsienko18}] The equality $M(c_1,\dots, c_n)=\pm I$ holds if and only if $(c_1,\dots,c_n)$ is the quiddity of some $3d$-dissection of an $n$-gon. Moreover, $M(c_1,\dots, c_n)=- I$ if the number of polygons in the dissection with an even number of sides is even, and $M(c_1,\dots, c_n)=I$ otherwise.
\end{theorem}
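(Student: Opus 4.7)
The plan is to prove both implications by induction, using a generalized van Gogh-type identity that describes how gluing a $3\ell$-gon along an edge modifies the matrix product $M(c_1,\dots,c_n)$.

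\emph{Forward direction.} I would induct on the number $m$ of polygons in the dissection. For $m=1$ the dissection is a single $3k$-gon with quiddity $(\underbrace{1,\ldots,1}_{3k})$; since $\begin{pmatrix}1&-1\\1&0\end{pmatrix}^3=-I$ (as computed in the text), one has $M(\underbrace{1,\ldots,1}_{3k})=(-I)^k$, which equals $-I$ precisely when $k$ is odd---exactly when the single polygon has odd side-count, so zero (an even number) of polygons are even-sided, matching the sign rule. For $m\geq 2$, choose an outermost $3\ell$-gon $P$ (a leaf of the dual tree, i.e.\ one meeting a single diagonal of the dissection); removing $P$ yields a 3d-dissection of a smaller polygon whose quiddity $(c_1,\dots,c_j,c_k,\dots,c_n)$ becomes $(c_1,\dots,c_j+1,\underbrace{1,\dots,1}_{3\ell-2},c_k+1,\dots,c_n)$ after $P$ is glued back.

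The inductive step hinges on the algebraic identity
\[
\begin{pmatrix}a+1 & -1\\ 1 & 0\end{pmatrix} A^{3\ell-2} \begin{pmatrix}b+1 & -1\\ 1 & 0\end{pmatrix} \;=\; (-1)^{\ell-1}\begin{pmatrix}a & -1\\ 1 & 0\end{pmatrix}\begin{pmatrix}b & -1\\ 1 & 0\end{pmatrix},
\]
where $A=\begin{pmatrix}1&-1\\1&0\end{pmatrix}$. For $\ell=1$ this is exactly what Lemma~\ref{lem:vangogh} asserts at the matrix level, and the general case reduces to $\ell=1$ via $A^{3\ell-2}=(-1)^{\ell-1}A$, which follows from $A^3=-I$. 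Thus gluing a $3\ell$-gon multiplies $M$ by $(-1)^{\ell-1}$, equal to $-1$ exactly when $3\ell$ is even---precisely when the count of even-sided polygons changes parity---so the sign rule is preserved through the induction.

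\emph{Backward direction.} Suppose $M(c_1,\dots,c_n)=\pm I$. I would induct on $n$. If $(c_1,\dots,c_n)=(1,\dots,1)$, then $M=A^n$, and $A^n=\pm I$ forces $3\mid n$; the trivial dissection of the $n$-gon as a single polygon does the job. Otherwise the strategy is to locate a \emph{$3d$-ear}: a maximal cyclic run of consecutive $1$'s of length $3\ell-2$ with $\ell\geq 1$, flanked by two entries $\geq 2$. Given such a run, the identity above applied in reverse replaces $(c_j,\underbrace{1,\dots,1}_{3\ell-2},c_k)$ with $(c_j-1,c_k-1)$, producing a shorter tuple of length $n-3\ell+2$ still satisfying $M'=\pm I$; induction provides a 3d-dissection of the smaller polygon, onto which one glues a $3\ell$-gon along the edge between the images of $c_j$ and $c_k$ to reconstruct a dissection of the original $n$-gon. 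The sign statement then follows from the forward-direction tracking applied to the constructed dissection.

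\emph{Main obstacle.} The crux is proving that a $3d$-ear always exists whenever the tuple is not identically $1$. The cone argument used earlier in Section~4 to show that every integer-frieze quiddity contains a $1$ already rules out the case ``all $c_i\geq 2$'' (if every $c_i\geq 2$, the product sends the cone $\{x\geq y\geq 0\}$ strictly into its interior, contradicting $M=\pm I$), so some $c_i=1$ must appear. The harder point is to exhibit a maximal cyclic run of $1$'s of length $\equiv 1\pmod 3$. I would attempt this by contradiction: assuming every maximal run has length $\equiv 0$ or $2\pmod 3$, decompose $M(c_1,\dots,c_n)$ into blocks $\begin{pmatrix}c_j&-1\\1&0\end{pmatrix}A^{r_j}\begin{pmatrix}c_{j+1}&-1\\1&0\end{pmatrix}$ indexed by the maximal runs of lengths $r_j$, reduce each $A^{r_j}$ to $\pm I$ or $\pm A^2$ via $A^3=-I$, and derive a contradiction with $M=\pm I$ through a sharpened cone/monotonicity argument on the resulting integer matrix (the matrix $A^2$ itself does not preserve the cone $x\geq y\geq 0$, so careful bookkeeping of mixed blocks is required). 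This mod-$3$ analysis of the run lengths is the arithmetic core of Ovsienko's bijection and is the step I expect to require genuine effort to pin down; the full argument appears in~\cite{Ovsienko18}.
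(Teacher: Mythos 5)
First, note that the paper itself does not prove this theorem: it is stated as a result cited from~\cite{Ovsienko18}, so there is no in-text proof to compare your argument against. Judged on its own, your forward direction is essentially complete and correct. The identity
\[
\begin{pmatrix}a+1 & -1\\ 1 & 0\end{pmatrix} A^{3\ell-2} \begin{pmatrix}b+1 & -1\\ 1 & 0\end{pmatrix} = (-1)^{\ell-1}\begin{pmatrix}a & -1\\ 1 & 0\end{pmatrix}\begin{pmatrix}b & -1\\ 1 & 0\end{pmatrix}
\]
checks out (both sides equal $(-1)^{\ell-1}\left(\begin{smallmatrix}ab-1 & -a\\ b & -1\end{smallmatrix}\right)$), the reduction to $\ell=1$ via $A^3=-I$ is right, and your sign bookkeeping ($(-1)^{\ell-1}=-1$ exactly when the glued polygon is even-sided) matches the stated sign rule, including the base case of a single $3k$-gon. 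Two small things deserve a sentence each: a leaf of the dual tree of a $3d$-dissection is a $3\ell$-gon all but one of whose sides lie on the boundary, so its $3\ell-2$ private vertices really do form a maximal run of $1$'s flanked by entries $\geq 2$; and if that run wraps around position $n$, you should first cyclically rotate the tuple, which is harmless because $M(c_2,\dots,c_n,c_1)$ is conjugate to $M(c_1,\dots,c_n)$ and $\pm I$ is central.

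The genuine gap is exactly where you flag it: in the backward direction you must show that any positive-integer solution of $M(c_1,\dots,c_n)=\pm I$ other than $(1,\dots,1)$ contains a maximal cyclic run of $1$'s of length congruent to $1$ modulo $3$. The cone argument from Section~4 only yields the existence of \emph{some} entry equal to $1$; it says nothing about the residue of the run lengths modulo $3$, and your proposed contradiction argument is only a plan, not a proof. The difficulty is real: after collapsing a run of length $r\equiv 2\pmod 3$ one gets blocks of the form $\left(\begin{smallmatrix}c & -1\\ 1 & 0\end{smallmatrix}\right)A^2=-R^{\,c-1}$, which are parabolic and do \emph{not} map the cone $x\geq y\geq 0$ strictly into its interior, so the monotonicity argument does not go through verbatim and one needs a finer analysis of mixed products of $R^{a}$ and $R^{c}S$ (essentially a totally positive/hyperbolic-element argument, which is the technical core of~\cite{Ovsienko18}). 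Until that lemma is established, the ``only if'' half of the theorem and the termination of your reduction are unproven; everything else in your write-up is sound.
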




\end{document}